\newtheorem{theorem}{Theorem}[section]
\theoremstyle{plain}
\newtheorem{cor}[theorem]{Corollary}
\newtheorem{example}[theorem]{Example}
\newtheorem{lemma}[theorem]{Lemma}
\newtheorem{prop}[theorem]{Proposition}
\numberwithin{equation}{section}
\newcommand{\ly}[1]{\chi^{#1}_{\nu}}
\newcommand{\R}{\mathbb{R}}
\newcommand{\N}{\mathbb{N}}
\newcommand{\proj}{\mathrm{proj}}
\newcommand{\av}{\underline{a}}
\newcommand{\xv}{\underline{x}}
\newcommand{\yv}{\underline{y}}
\newcommand{\ii}{\mathbf{i}}
\newcommand{\iiv}{\overline{\imath}}
\newcommand{\jjv}{\overline{\jmath}}
\newcommand{\jj}{\mathbf{j}}
\newcommand{\tv}{\underline{t}}
\newcommand{\pv}{\underline{p}}
\newcommand{\Alpha}{\mathcal{A}}
\newcommand{\cm}[2]{\lambda^{#2}_{#1}}
\newcommand{\cmd}[2]{\mu^{#2}_{#1}}
\newcommand{\lye}[2]{\chi^{#2}_{#1}}
\newcommand{\y}{\underline{\mathbf{y}}}
\newcommand{\z}{\underline{\mathbf{z}}}
\newcommand{\dd}{\,\mathrm{d}}
\DeclareMathOperator*{\esssup}{ess\,sup}
\DeclareMathOperator*{\essinf}{ess\,inf}
\DeclareMathOperator{\dimh}{dim_H}
\DeclareMathOperator{\udimh}{\overline{dim}_H}
\DeclareMathOperator{\ldimh}{\underline{dim}_H}
\DeclareMathOperator{\dimp}{dim_p}
\DeclareMathOperator{\udimp}{\overline{dim}_p}
\DeclareMathOperator{\ldimp}{\underline{dim}_p}
\DeclareMathOperator{\dimloc}{dim_{loc}}
\DeclareMathOperator{\udimloc}{\overline{dim}_{loc}}
\DeclareMathOperator{\ldimloc}{\underline{dim}_{loc}}
\begin{document}
\title[Ledrappier-Young formula and exact dimensionality of self-affine measures]{Ledrappier-Young formula and exact dimensionality of self-affine measures}

\author{Bal\'azs B\'ar\'any}
\address[Bal\'azs B\'ar\'any]{ Mathematics Institute, University of Warwick, Coventry CV4 7AL, United Kingdom \& \\
	Budapest University of Technology and Economics, MTA-BME Stochastics Research Group, P.O.\ Box 91, 1521 Budapest, Hungary} \email{balubsheep@gmail.com}

\author{Antti K\"aenm\"aki}
\address[Antti K\"aenm\"aki]{Institute of Mathematics,
Polish Academy of Sciences, S\'niadeckich 8, 00-656 Warsaw, Poland \&
Department of Mathematics and Statistics, P.O.\ Box 35 (MaD), FI-40014
University of Jyv\"askyl\"a, Finland} \email{antti.kaenmaki@jyu.fi}

\subjclass[2010]{Primary 37C45 Secondary 28A80}
\keywords{Self-affine set, self-affine measure, Hausdorff dimension, local dimension.}
\thanks{This work was partially supported by the grant 346300 for IMPAN from the Simons Foundation and by the Polish MNiSW 2015-2019 matching fund. B\'ar\'any also acknowledges the support from the grants EP/J013560/1 and OTKA K104745 and the J\'anos Bolyai Research Scholarship of the Hungarian Academy of Sciences.}
\date{\today}

\begin{abstract}
  In this paper, we solve the long standing open problem on exact dimensionality of self-affine measures on the plane. We show that every self-affine measure on the plane is exact dimensional regardless of the choice of the defining iterated function system. In higher dimensions, under certain assumptions, we prove that self-affine and quasi self-affine measures are exact dimensional. In both cases, the measures satisfy the Ledrappier-Young formula.
\end{abstract}

\maketitle

\thispagestyle{empty}

\section{Introduction}

Let $\Alpha=\left\{A_1,\dots, A_N\right\}$ be a finite set of contracting non-singular $d\times d$ matrices, and let $\Phi=\left\{f_i(\xv)=A_i\xv+\tv_i\right\}_{i=1}^N$ be an iterated function system (IFS) of affine mappings, where $\underline{t}_i\in\R^d$ for all $i \in \{ 1,\dots,N \}$. It is a well known fact that there exists a unique non-empty compact subset $\Lambda$ of $\R^d$ such that $$\Lambda=\bigcup_{i=1}^Nf_i(\Lambda).$$ We call the set $\Lambda$ a \textit{self-affine set} associated to $\Phi$.

Let $\alpha_i(A)$ be the $i$th singular value of a $d\times d$ non-singular matrix $A$. Namely, $\alpha_i(A)$ is the positive square root of the $i$th eigenvalue of $AA^*$, where $A^*$ is the transpose of $A$. Thus, $0<\alpha_d(A)\leq\cdots\leq\alpha_1(A)<1$. The geometric interpretation of the singular values is that the linear map $\xv\mapsto A\xv$ maps the $d$-dimensional unit ball to an ellipse with semiaxes of length $\alpha_d(A)\leq\cdots\leq\alpha_1(A)$.
For a subspace $V\subseteq\R^d$, we define the restricted operator norm of a matrix $A$ to be
$$
\|A|V\|=\sup_{v\in V}\frac{\|Av\|}{\|v\|},
$$
where $\|v\|$ denotes the Euclidean norm of a vector $v$. Let $\mathfrak{m}(A|V)=\|A^{-1}|V\|^{-1}$ and note that $\alpha_1(A)=\|A|\R^d\|$ and $\alpha_d(A)=\mathfrak{m}(A|\R^d)$.

We denote the Hausdorff dimension of a set $E\subseteq\R^d$ by $\dimh E$ and the packing dimension by $\dimp E$. For the definitions and basic properties of these quantities, we refer to Falconer~\cite{Fb1}.

The dimension theory of self-affine sets is far away from being well understood. Bedford~\cite{Be} and McMullen~\cite{Mc} studied the Hausdorff and packing dimensions of a carpet-like planar class of self-affine sets. This result was generalised by Kenyon and Peres~\cite{KP} for higher dimensions. Later, Gatzouras and Lalley~\cite{GL} and Bara\'nski~\cite{B} studied a more general class of carpet-like self-affine sets in the plane. Fraser~\cite{Fra} has calculated the packing dimension for general box-like planar self-affine sets.

Falconer~\cite{F} introduced the singular value function for non-singular matrices and defined the corresponding subadditive pressure. He showed that the zero of the pressure, the affinity dimension, is an upper bound for the packing dimension of the self-affine set. He also proved that if the contraction ratios of the mappings are strictly less than $1/3$ then the Hausdorff and packing dimensions coincide and equal to the affinity dimension for Lebesgue almost every translation vector $(\tv_1,\dots,\tv_N)$. Later, Solomyak~\cite{S} extended the bound to $1/2$. It follows from the example of Przytycki and Urba\'nski~\cite{PU} that the bound $1/2$ is sharp. Very recently, working on the plane, B\'ar\'any and Rams~\cite{BR} proved that for almost all positive matrices under the strong separation condition, the Hausdorff dimension is equal to the affinity dimension provided that the $1$-bunched condition holds or the affinity dimension is greater than $5/3$. In the overlapping carpet case, Shmerkin~\cite{Sh} has used the transversality method for self-similar sets to calculate the dimension of a class of box-like self-affine sets. Furthermore, Fraser and Shmerkin~\cite{FS} have shown that the dimension of a typical overlapping Bedford-McMullen carpet-like self-affine set is equal to the dimension of the corresponding non-overlapping Bedford-McMullen carpets.

The first dimension result valid for open set of translation vectors was given by Hueter and Lalley~\cite{HL}. They showed that under some conditions on the matrices, if the strong separation condition holds, then the Hausdorff dimension of the self-affine set is equal to the affinity dimension, which in this case is less than $1$. K\"aenm\"aki and Shmerkin~\cite{KS} proved a similar statement for the packing dimension of overlapping self-affine sets of Kakeya-type, in which case the dimension is strictly larger than $1$. Falconer~\cite{F2} gave a condition on the projection of the self-affine set, under which the packing dimension is equal to the affinity dimension. Falconer and Kempton~\cite{FK1} generalised this result (and the condition) on the plane for the Hausdorff dimension.

Let us next consider the dimension theory of self-affine measures. Let $\mu$ be an arbitrary Radon measure on $\R^d$ and denote by $B(\xv,r)$ the $d$-dimensional closed ball centered at $\xv\in\R^d$ with radius $r$. Then we call
\begin{equation*}
\ldimloc(\mu,\xv)=\liminf_{r\rightarrow0+}\frac{\log\mu(B(\xv,r))}{\log r} \quad \text{and} \quad \udimloc(\mu,\xv)=\limsup_{r\rightarrow0+}\frac{\log\mu(B(\xv,r))}{\log r}
\end{equation*}
the lower and upper local dimension of $\mu$ at the point $\xv$, respectively. If the limit exists, then we say that the measure has local dimension $\dimloc(\mu,\xv)$ at the point $\xv$. For a given Radon measure $\mu$, the local dimensions naturally introduce four different dimensions:
\begin{align*}
	\ldimh\mu&=\essinf_{\xv\sim\mu}\ldimloc(\mu,\xv)=\inf\left\{\dimh A:A\text{ is a Borel set with }\mu(A)>0\right\},\\
	\udimh\mu&=\esssup_{\xv\sim\mu}\ldimloc(\mu,\xv)=\inf\left\{\dimh A:A\text{ is a Borel set with }\mu(A^c)=0\right\},\\
	\ldimp\mu&=\essinf_{\xv\sim\mu}\udimloc(\mu,\xv)=\inf\left\{\dimp A:A\text{ is a Borel set with }\mu(A)>0\right\},\\
	\udimp\mu&=\esssup_{\xv\sim\mu}\udimloc(\mu,\xv)=\inf\left\{\dimp A:A\text{ is a Borel set with }\mu(A^c)=0\right\},
\end{align*}
where $A^c$ denotes the complement of the set $A$. For proofs of the above characterizations via set-dimensions, see \cite{Fb2}. Moreover, we call the measure $\mu$ exact dimensional if the local dimension exists for $\mu$-almost every point and equals to $$\ldimh \mu=\udimh \mu=\ldimp \mu=\udimp \mu.$$
In this case, the common value is denoted by $\dim\mu$.

The problem of the existence of local dimensions has a long history for self-affine measures and also in smooth dynamical systems. For an invariant measure in high-dimensional $C^{1+\alpha}$ systems, Ledrappier and Young~\cite{LY1,LY2} proved the existence of the local dimensions along stable and unstable local manifolds. Eckmann and Ruelle~\cite{ER} indicated that it is unknown whether the local dimension
of a hyperbolic invariant measure is the sum of the local dimensions along stable and unstable local manifolds. This question
was referred to as the Eckmann-Ruelle conjecture, and it was later confirmed by Barreira, Pesin, and Schmeling~\cite{BPS}. However, it remained open for non-smooth systems, such as self-affine measures.

Let $\Sigma$ be the set of one-sided words of symbols $\left\{1,\dots,N\right\}$ with infinite length, i.e.\ $\Sigma=\left\{1,\dots,N\right\}^{\N}$, where we adopt the convention that $0\in\N$. Let us denote the left-shift operator on $\Sigma$ by $\sigma$. Let the set of words with finite length be $\Sigma^*=\bigcup_{n=0}^{\infty}\left\{1,\dots,N\right\}^n$ and denote the length of $\iiv \in \Sigma^*$ by $|\iiv|$. We define the cylinder sets of $\Sigma$ in the usual way, that is, by setting
$$
[i_0,\dots,i_n]=\left\{\jj=(j_0,j_1,\dots)\in\Sigma:i_0=j_0,\dots,i_n=j_n\right\}.
$$
For a word $\iiv=(i_0,\dots,i_n)$ with finite length let $f_{\iiv}$ be the composition $f_{i_0}\circ\cdots\circ f_{i_n}$ and $A_{\iiv}$ be the product $A_{i_0}\cdots A_{i_n}$.

Let $\nu$ be a probability measure on $\Sigma$. We say that $\nu$ is \textit{quasi-Bernoulli} if there exists a constant $C\geq1$ such that for every $\iiv,\jjv\in\Sigma^*$
$$
C^{-1}\nu([\iiv])\nu([\jjv])\leq\nu([\iiv\jjv])\leq C\nu([\iiv])\nu([\jjv]).
$$
We note that this definition suffices to us since in the proofs we can always replace the quasi-Bernoulli measure $\nu$ by a $\sigma$-invariant ergodic quasi-Bernoulli measure equivalent to $\nu$. If the constant $C \geq 1$ above can be chosen to be $1$, then $\nu$ is called \emph{Bernoulli}. It is easy to see that a Bernoulli measure is $\sigma$-invariant and ergodic. By definition, for any Bernoulli measure $\nu$ there exists a probability vector $\pv=(p_1,\dots,p_N)$ such that $\nu([i_0,\dots,i_n])=p_{i_0}\cdots p_{i_n}$.

We define a \textit{natural projection} from $\Sigma$ to $\Lambda$ by $$
\pi(i_0i_1\cdots)=\lim_{n\rightarrow\infty}f_{i_0}\circ f_{i_1}\circ\cdots\circ f_{i_n}(\underline{0}),
$$
where $\underline{0}$ denotes the zero vector in $\R^d$. If $\nu$ is a Bernoulli measure, then the push-down measure $\mu=\pi_*\nu=\nu\circ\pi^{-1}$ is called \textit{self-affine}, and if $\nu$ is quasi-Bernoulli then $\mu$ is called \textit{quasi self-affine}. It is well known that a self-affine measure $\mu$ satisfies
\begin{equation} \label{eq:s-a_measure}
  \mu = \sum_{i=1}^N p_i \mu \circ f_i^{-1},
\end{equation}
where $(p_1,\ldots,p_N)$ is the associated probability vector.

If the linear parts of $f_i$ are similarities then we call the self-affine measure \emph{self-similar}. Ledrappier indicated that applying the method used in \cite{LY1, LY2}, one could prove exact dimensionality for self-similar measures; see Peres and Solomyak~\cite[p.\ 1619]{PeSo}. This was later conjectured by Fan, Lau, and Rao~\cite{FLR} and finally confirmed by Feng and Hu~\cite{FH}. We remark that Feng and Hu~\cite{FH} proved the result for the push-down measure of any ergodic $\sigma$-invariant measure.

The first result for self-affine systems is due to McMullen \cite{Mc}, who implicitly proved the exact dimensionality of self-affine measures on the Bedford-McMullen carpets. Later, Gatzouras and Lalley~\cite{GL} showed the exact dimensionality and calculating the value of dimension of self-affine measures for a class of planar carpet-like self-affine measures. In fact, their method to calculate the Hausdorff dimension of carpet-like self-affine sets was to find the maximal possible dimension of self-affine measures. Later Bara\'nski~\cite{B} showed similar result for another class of planar self-affine carpets. In addition to the self-similar case, Feng and Hu~\cite{FH} proved exact dimensionality for push-down measure of arbitrary ergodic $\sigma$-invariant measure on box-like self-affine sets. This was previously suggested by Kenyon and Peres~\cite{KP} for higher dimensional Bedford-McMullen carpets. If the strong separation condition holds and the linear parts satisfy the dominated splitting condition, B\'ar\'any~\cite{B} showed exact dimensionality of planar self-affine measures and gave the dimension a formula which involves entropy, Lyapunov exponents and projections of the measure. This formula was first shown by Ledrappier and Young~\cite{LY1,LY2} for the local dimension along stable manifolds of invariant measures of $C^2$ smooth diffeomorphisms.

K\"aenm\"aki~\cite{K} and K\"aenm\"aki and Vilppolainen~\cite{KV} showed that for almost every translation vector there exists an ergodic $\sigma$-invariant measure such that the upper Hausdorff dimension of the push down measure is equal to the affinity dimension and hence, is the maximum possible. For almost every positive matrices taken from certain open set, B\'ar\'any and Rams~\cite{BR} showed that this dimension maximizing measure exists and is exact dimensional for all translation vectors provided that the strong separation condition holds.

The main result, Theorem~\ref{texact2}, of this paper confirms that every self-affine measure is exact dimensional provided that the corresponding Lyapunov exponents are distinct. As a corollary, we solve a long standing open problem in the plane by showing that every planar self-affine measure is exact-dimensional regardless of the choice of matrices and translation vectors. This generalises the results of Gatzouras and Lalley~\cite{GL} and Feng and Hu~\cite{FH} in the plane. By introducing the projected entropy, exact dimensionality of self-similar measures was proven by Feng and Hu~\cite{FH}. Relying on the product structure of box-like self-affine systems, they were able to show the Ledrappier-Young formula in terms of the sequence of projected entropies. In our case, since the choice of matrices is free, we have the added complication coming from the non-existence of invariant directions. Therefore we adapt the original method of Ledrappier-Young~\cite{LY1,LY2} by considering orthogonal projections instead of locally defined invariant H\"older manifolds. The adaptation is not straightforward since the induced dynamical system has singularities and is not invertible.

In Theorem~\ref{texactd}, we prove that every quasi self-affine measure on $\R^d$ is exact dimensional if the corresponding matrices satisfies the totally dominated splitting condition. We show that the Ledrappier-Young formula holds also in this case.

Kaplan and Yorke conjectured that for Sinai-Ruelle-Bowen (SRB) measures the Hausdorff dimension is generically equal to the Lyapunov dimension; see Eckmann and Ruelle~\cite{ER}. The self-affine and quasi self-affine measures can be defined as SRB-measures of some Baker-transformation with singularities. Jordan, Pollicott, and Simon~\cite{JPS} showed that if the norm of the linear parts is less than $1/2$ then for Lebesgue almost every translation vector the lower and upper Hausdorff dimensions of the push-down measure of any ergodic $\sigma$-invariant measure coincide, and the value is equal to the Lyapunov dimension of the measure. Rossi~\cite{Ro} extended this result for packing dimensions. 

As a corollary to our results, we reformulate the Kaplan-Yorke conjecture for self-affine and quasi self-affine measures in Corollary~\ref{cor:KYd}.


\subsection*{Structure of the paper} In Section~\ref{sec:results}, we state our main results and exhibit a few corollaries. In Section~\ref{sec:condmeasure}, we give a general overview on conditional measures of Radon measures with respect to measurable partitions and prove a couple of auxiliary results. In Section~\ref{sec:Lift}, we introduce the dynamical system used to study self-affine measures on $\R^d$. We define the system in $\R^{d+1}\times\mathbb{F}$ by lifting the planar IFS into $\R^{d+1}$ such that it satisfies the strong separation condition. Here $\mathbb{F}$ is the set of flags and its role is to keep track of strong stable directions. Moreover, we also define families of invariant measurable partitions associated to the Lyapunov exponents/stable directions. In Section~\ref{sec:proofLY2}, we prove our main result on self-affine measures having distinct Lyapunov exponents, Theorem~\ref{texact2}. The proof is decomposed into three propositions. At first, we show that the conditional measures along the strongest stable directions are exact dimensional. Secondly, we prove that the projections along strong stable directions of conditional measures onto weaker stable directions are exact dimensional. Finally, we show that the conditional measures have product structure with respect to the strong stable foliations and projections along strong stable foliations.

We note that there is a remarkable difference between the case of general matrices with self-affine measures having distinct Lyapunov exponents and the case of matrices satisfying the totally dominated splitting condition with quasi self-affine measures. In the latter case, because of the result of Bochi and Gourmelon~\cite{BG}, we can define a H\"older continuous function from the symbolic space to the space of sequences of subspaces, which are the strong stable subspaces. Therefore in Section~\ref{sec:Liftd}, we can define our dynamical system on $\R^d\times\Sigma$. However, for general matrices such H\"older function does not necessarily exist. Therefore, we have to define our dynamical system on $\R^{d+1}\times\mathbb{F}$, which is clearly not invertible nor hyperbolic. This fact also restricts our analysis with general matrices by requiring the measure to be self-affine.

In Section~\ref{sec:proofLYd}, we prove the case of matrices satisfying the totally dominated splitting condition, Theorem~\ref{texactd}. To prove that the projections along strong stable directions of conditional measures onto weaker stable directions are exact dimensional is the main contribution of this section.

\section{Main results}\label{sec:results}

\subsection{Ledrappier-Young formula for Bernoulli measures with simple spectrum}
To state our first main theorem, let us recall here the statement of the Oseledets' Theorem for one-sided shifts; see \cite[Theorem~3.4.1]{A}.

	\begin{theorem}[Oseledets]\label{thm:Oseledets}
		Let $\mathcal{A}=\left\{A_1,\dots,A_N\right\}$ be a set of non-singular $d\times d$ matrices with $\|A_i\|<1$ for $i \in \{ 1,\dots,N \}$. Then for any ergodic $\sigma$-invariant measure $\nu$ on $\Sigma$ there exist constants $0<\ly{1}\leq\cdots\leq\ly{d}$ such that
		$$
		\lim_{n\to\infty}\frac{1}{n}\log\alpha_{i}(A_{i_n}^{-1}\cdots A_{i_0}^{-1})=\ly{d-i+1}
		$$
		for $\nu$-almost every $\ii$. There exist $p \in \{ 1,\ldots,d\}$ and $d_j\geq1$ for $j\in \{ 1,\dots,p \}$ such that
		$$
		\ly{1}=\dots=\ly{d_1}<\ly{d_1+1}=\dots=\ly{d_1+d_2}<\dots<\ly{d_1+\dots+d_{p-1}+1}=\dots=\ly{d}.
		$$
		Moreover, for every $j \in \{ 1,\dots,p \}$ and $\nu$-almost every $\ii\in\Sigma$ there exist a $d_1+\dots+d_j$-dimensional subspace $E^j(\ii)$ of $\R^d$ depending measurably on the point such that
		$$
		E^1(\ii)\subset \cdots \subset E^{p}(\ii), \quad A_{i_0}^{-1}E^j(\ii)=E^j(\sigma\ii), \quad \text{and} \quad \lim_{n\to\infty}\frac{1}{n}\log\|A_{i_n}^{-1}\cdots A_{i_0}^{-1}|E^j(\ii)\|=\ly{d_1+\dots+d_j}
		$$
		for $\nu$-almost every $\ii$. The numbers $\ly{i}$ are called the \emph{Lyapunov-exponents} of $\nu$.
	\end{theorem}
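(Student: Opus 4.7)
The plan is to prove this as the classical Oseledets multiplicative ergodic theorem for one-sided subshifts, whose only input beyond measure theory is Kingman's subadditive ergodic theorem. Since we work on a one-sided shift, we should not expect a full Oseledets splitting, only a decreasing filtration into the past (or, equivalently, an increasing filtration for the inverse cocycle), which matches exactly the statement of $E^1(\ii)\subset\cdots\subset E^p(\ii)$.

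First I would establish the existence of the Lyapunov exponents by studying the singular values through exterior powers. For each $k\in\{1,\dots,d\}$ set
\[
\varphi_k^n(\ii)=\log\bigl\|\wedge^k\bigl(A_{i_{n-1}}^{-1}\cdots A_{i_0}^{-1}\bigr)\bigr\|=\log\prod_{j=1}^{k}\alpha_j\bigl(A_{i_{n-1}}^{-1}\cdots A_{i_0}^{-1}\bigr).
\]
Submultiplicativity of operator norms on $\wedge^k\R^d$ makes $(\varphi_k^n)_n$ a subadditive cocycle with respect to $\sigma$, and $\varphi_k^n\in L^1(\nu)$ because $\|A_i^{-1}\|$ is bounded uniformly in $i\in\{1,\dots,N\}$. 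Kingman's subadditive ergodic theorem, together with ergodicity of $\nu$, yields constants $\Lambda_k$ with $\lim_n n^{-1}\varphi_k^n=\Lambda_k$ $\nu$-a.e. Setting $\ly{d-k+1}=\Lambda_k-\Lambda_{k-1}$ (with $\Lambda_0=0$) gives a nondecreasing sequence $0<\ly{1}\le\cdots\le\ly{d}$ of Lyapunov exponents, and grouping consecutive equal values defines the jump indices $d_1,\dots,d_p$ with the desired strict inequalities.

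Next I would construct the filtration $E^j(\ii)$. Perform the singular value decomposition
\[
A_{i_{n-1}}^{-1}\cdots A_{i_0}^{-1}=U_n\,D_n\,V_n^{*},
\]
let $W_n^{j}(\ii)$ be the span of the columns of $V_n$ associated with the $d_1+\cdots+d_j$ smallest diagonal entries of $D_n$, and take $E^j(\ii)$ to be the limit of $W_n^{j}(\ii)$ in the Grassmannian. The existence of this limit is the technical heart of the argument: using the gap $\ly{d_1+\cdots+d_j+1}-\ly{d_1+\cdots+d_j}>0$, one shows by a two-scale comparison (controlling $n$ steps against $n+1$ steps via the cocycle identity) that the angle between $W_n^{j}$ and $W_{n+1}^{j}$ decays exponentially, yielding a Cauchy sequence; measurability of the limit follows because it is a pointwise limit of measurable Grassmannian-valued maps. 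The norm formula
\[
\lim_{n\to\infty}\frac{1}{n}\log\bigl\|A_{i_{n-1}}^{-1}\cdots A_{i_0}^{-1}|E^j(\ii)\bigr\|=\ly{d_1+\cdots+d_j}
\]
is then a direct consequence: vectors in $E^j(\ii)$ are, up to exponentially small errors, supported on the $V_n$-directions corresponding to the $d_1+\cdots+d_j$ slowest-growing singular values of the inverse cocycle.

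Finally, for the equivariance $A_{i_0}^{-1}E^j(\ii)=E^j(\sigma\ii)$, one applies the cocycle identity
\[
A_{i_{n-1}}^{-1}\cdots A_{i_1}^{-1}=\bigl(A_{i_{n-1}}^{-1}\cdots A_{i_0}^{-1}\bigr)A_{i_0},
\]
and observes that multiplying the right-hand side by $A_{i_0}$ transports the subspaces $W_n^{j}(\ii)$ associated with the $n$-step SVD at $\ii$ to those associated with the $(n-1)$-step SVD at $\sigma\ii$ up to a vanishing error, so the limits match under $A_{i_0}^{-1}$. The main obstacle, as usual in Oseledets-type theorems, lies in the convergence of the subspaces $W_n^{j}(\ii)$; once the exponential angle estimates are in place, the rest of the statement is bookkeeping with Kingman's theorem and the SVD.
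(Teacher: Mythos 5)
The paper does not give its own proof of this statement: it is quoted as a known result with an explicit citation to Arnold's book on random dynamical systems (Theorem~3.4.1 there). There is therefore no in-paper proof to compare against, and the review can only assess your argument on its own merits.

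Your outline is a correct sketch of the standard Kingman--plus--SVD proof of the one-sided Oseledets theorem (the Raghunathan approach). The exponents come from subadditivity of $\log\|\wedge^k A^{(n)}\|$ and Kingman's theorem, with $\ly{d-k+1}=\Lambda_k-\Lambda_{k-1}$; the consecutive-differences $\Lambda_k-\Lambda_{k-1}$ are nonincreasing in $k$ because $\log\alpha_k$ is, which gives the ordering $\ly{1}\le\cdots\le\ly{d}$, and $\ly{1}>0$ because $\alpha_d(A_i^{-1})=1/\|A_i\|>1$ uniformly. You correctly note that on a one-sided shift one only gets a measurable filtration $E^1\subset\cdots\subset E^p$ rather than a splitting, which matches the cited statement. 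The construction of $E^j(\ii)$ as the Grassmannian limit of the $(d_1+\cdots+d_j)$-dimensional span of the smallest right singular vectors of $A^{(n)}(\ii)$ is the right object; the convergence hinges on the gap $\ly{d_1+\cdots+d_j+1}-\ly{d_1+\cdots+d_j}>0$ producing angle increments $\angle(W_n^j,W_{n+1}^j)$ that are summable. One small point worth being explicit about: the exponential decay of these angles is only an almost-everywhere, eventually-in-$n$ estimate, since Kingman only gives $\tfrac1n\log\bigl(\alpha_{d-k+1}(A^{(n)})/\alpha_{d-k}(A^{(n)})\bigr)\to\ly{k}-\ly{k+1}<0$ rather than a uniform bound; this suffices for the Cauchy argument but should be phrased as such. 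The equivariance step via $A^{(n-1)}(\sigma\ii)=A^{(n)}(\ii)A_{i_0}$ and the norm asymptotics on $E^j(\ii)$ then follow as you say. In short: the sketch is the standard correct route, and the only gap is that the ``technical heart'' (the exponential angle estimate) is asserted rather than carried out.
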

	
From the geometric point of view, the Oseledets theorem states that for large $n$, a typical linear map $\xv\mapsto A_{i_n}^{-1}\cdots A_{i_0}^{-1}\xv$ maps the unit ball to an ellipse with semiaxes of length approximately $e^{\ly{i} n}$. If two Lyapunov exponents coincide, then the ratio of the lengths of the corresponding semiaxes may still converge to zero, but subexponentially.
	We say that $\nu$ has \textit{simple Lyapunov spectrum} if all the Lyapunov exponents have multiplicity one, that is,
	\begin{equation}\label{eq:simplespect}
	\ly{1}<\ly{2}<\cdots<\ly{d}.
	\end{equation}
	Let us denote the Grassmannian manifold of $k$-dimensional subspaces of $\R^d$ by $G(k,d)$. Moreover, for every $1\leq j_1<\cdots<j_p\leq d$ let
	$$
	\mathbb{F}^d_{(j_1,\dots,j_p)}=\left\{V_p\times\cdots\times V_1\in G(j_1,d)\times\cdots\times G(j_p,d):V_p\subset\cdots\subset V_1\right\}
	$$
	be the space of flags.
	
	\begin{theorem}[Existence of Furstenberg measure]\label{thm:furst}
		Let $\mathcal{A}=(A_1,\dots,A_N)$ and $\nu$ be a Bernoulli measure on $\Sigma$. Moreover, let $\tau=(d_p,d_p+d_{p-1},\dots,d_p+\cdots+d_2)$, where $p$ and $d_i$ are as in Theorem~\ref{thm:Oseledets}. Define $T \colon \Sigma\times\mathbb{F}^d_{\tau}\mapsto\Sigma\times\mathbb{F}^d_{\tau}$ such that
		$$
		T(\ii,V_{p-1}\times\cdots\times V_1)=(\sigma\ii,A_{i_0}^{-1}V_{p-1}\times\cdots\times A_{i_0}^{-1}V_1).
		$$
		Then there exists a measure $\mu_F$ on $\mathbb{F}^d_{\tau}$ such that $\nu\times\mu_F$ is $T$-invariant and ergodic. Furthermore,
		$$
		\lim_{n\to\infty}\frac{1}{n}\log\mathfrak{m}(A_{i_n}^{-1}\cdots A_{i_0}^{-1}|V_j)=\ly{d_1+\dots+d_{j+1}}
		$$
		for $\nu\times\mu_F$-almost all $(\ii,V_{p-1}\times\cdots\times V_1)$.
	\end{theorem}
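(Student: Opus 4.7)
The plan is to construct $\mu_F$ as a fixed point of the natural Markov operator on $\mathcal{M}(\mathbb{F}^d_\tau)$, verify product $T$-invariance by a direct computation, establish ergodicity via an extreme-point selection combined with a natural-extension argument, and read off the Lyapunov statement from Oseledets' theorem applied to the skew product.

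For existence, define the transfer operator $L\mu=\sum_{i=1}^N p_i (A_i^{-1})_*\mu$, which is weak-$*$ continuous and affine on the compact convex space $\mathcal{M}(\mathbb{F}^d_\tau)$. The Schauder-Tychonoff fixed point theorem then produces $\mu_F$ with $L\mu_F=\mu_F$. Using $\sigma$-invariance of $\nu$ and Bernoulli independence of $i_0$ from $(i_1,i_2,\ldots)$, a direct computation gives $T_*(\nu\times\mu_F)=\nu\times\mu_F$: testing against $f\in C(\Sigma\times\mathbb{F}^d_\tau)$ splits the integral via the Bernoulli decomposition and the identity $L\mu_F=\mu_F$.

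For ergodicity, replace $\mu_F$ by an extreme point of the weak-$*$ compact convex set $\mathcal{K}$ of $L$-fixed probability measures (Krein-Milman). Suppose for contradiction that $\nu\times\mu_F$ decomposes nontrivially into $T$-ergodic components. Lift to the natural extension $(\hat\Sigma,\hat\sigma,\hat\nu,\hat T)$ with $\hat T$ invertible: the conditional measures of each ergodic component satisfy the deterministic cocycle $\hat\eta_{\hat\ii}=(A_{i_0}A_{i_1}\cdots A_{i_{n-1}})_*\hat\eta_{\hat\sigma^n\hat\ii}$ for every $n\geq 1$. As $n\to\infty$, the forward matrix product $A_{i_0}\cdots A_{i_{n-1}}$ is a composition of contractions and, by a Furstenberg-type argument on the flag manifold, concentrates $\hat\eta_{\hat\ii}$ on an attracting flag determined by the future $\hat\ii|_{\mathbb{N}}$. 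Past-future independence of the Bernoulli $\hat\nu$ together with extremality of $\mu_F$ in $\mathcal{K}$ then forces these fibers to collapse to the constant $\mu_F$, contradicting non-triviality.

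For the Lyapunov statement, Oseledets' theorem (Theorem~\ref{thm:Oseledets}) applied to the $T$-ergodic system $(\Sigma\times\mathbb{F}^d_\tau,T,\nu\times\mu_F)$ recovers the same spectrum $\chi^{1}_\nu\leq\cdots\leq\chi^{d}_\nu$ as for $(\Sigma,\sigma,\nu)$, since the flag coordinate is invisible to matrix norms. Because $\dim V_j+\dim E^j(\ii)=d$, transversality $V_j\cap E^j(\ii)=\{0\}$ forces $\dim(V_j\cap E^{j+1}(\ii))=d_{j+1}$ by a rank count; on this intersection the minimum expansion of $A_{i_n}^{-1}\cdots A_{i_0}^{-1}$ is exactly $\chi^{d_1+\cdots+d_{j+1}}_\nu$, while any other vector in $V_j$ has nonzero component in some $E^k$ with $k\geq j+2$ and so grows strictly faster, giving the claimed rate for $\mathfrak{m}(A_{i_n}^{-1}\cdots A_{i_0}^{-1}|V_j)$. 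The hardest step is the reduction to product form in the ergodicity argument, which hinges on the proximal behavior of the random cocycle on the flag manifold combined with Bernoulli past-future independence; the verification that $\mu_F$-a.e.\ flag is transverse to the Oseledets filtration is inherent to the attractor-like characterization of $\mu_F$ produced by that reduction.
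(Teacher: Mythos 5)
Your construction of $\mu_F$ via the Markov operator $L\mu=\sum_i p_i(A_i^{-1})_*\mu$ and Schauder--Tychonoff, together with the Bernoulli computation showing $T_*(\nu\times\mu_F)=\nu\times\mu_F$, is fine, and the extreme-point reduction for ergodicity is a sound starting point (in fact the equivalence between extremality of $\mu_F$ in the set of $L$-stationary measures and $T$-ergodicity of $\nu\times\mu_F$ is a general fact for measured skew products that does not need any proximality hypothesis). The genuine gap is in the Lyapunov-rate step. You write that ``transversality $V_j\cap E^j(\ii)=\{0\}$ forces $\dim(V_j\cap E^{j+1}(\ii))=d_{j+1}$'' and that the transversality ``is inherent to the attractor-like characterization of $\mu_F$,'' but transversality is precisely what fails for an arbitrary extreme $L$-stationary measure. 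Take $d=2$, $A_1=A_2=\mathrm{diag}(1/2,1/4)$ and $p_1=p_2=1/2$. Then $\chi^1=\log 2<\chi^2=\log 4$, both $\delta_{x\text{-axis}}$ and $\delta_{y\text{-axis}}$ are extreme $L$-stationary measures, and both give $T$-ergodic products. But $E^1(\ii)=x$-axis for every $\ii$, so the choice $\mu_F=\delta_{x\text{-axis}}$ has $V_1=E^1(\ii)$, transversality fails, and $\tfrac{1}{n}\log\mathfrak{m}(A_{i_n}^{-1}\cdots A_{i_0}^{-1}|V_1)\to\chi^1$ rather than the required $\chi^2$. So the theorem's final displayed equation is false for this perfectly valid output of your construction; the statement is not about every $L$-stationary measure, but about a specific one.

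Moreover, the mechanism you propose to pin down the flags points the wrong way. Iterating $\hat\eta_{\hat\ii}=(A_{i_0}\cdots A_{i_{n-1}})_*\hat\eta_{\hat\sigma^n\hat\ii}$ forward in $n$ concentrates on the \emph{forward} attracting flag of the product $A_{i_0}\cdots A_{i_{n-1}}$ on the flag manifold, i.e.\ the directions that are contracted most slowly, which in the limit are exactly the one-sided Oseledets subspaces $E^j(\ii)$. In the example this attracting line is the $x$-axis --- tangent, not transverse, to the filtration. The correct iteration is backward: $\hat\eta_{\hat\ii}=(A_{i_{-1}}^{-1}\cdots A_{i_{-n}}^{-1})_*\hat\eta_{\hat\sigma^{-n}\hat\ii}$, so the flags are determined by the \emph{past}, the pushforward involves the inverse matrices, and the limit flags are the two-sided Oseledets sums $\hat E^p(\hat\ii)\oplus\cdots\oplus\hat E^{j+1}(\hat\ii)$, which are transverse to $E^j$. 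This is precisely what the paper does, and in a cleaner way: it never touches the Markov operator, but defines the invariant lift on $\hat\Sigma\times\mathbb{F}^d_\tau$ as the graph measure over $\mathbb{P}$ with Dirac fiber at the past-measurable Oseledets flag (hence trivially ergodic because $(\hat\Sigma,\hat\sigma,\mathbb{P})$ is), and then obtains $\nu\times\mu_F$ by conditioning on the future $\mathcal{F}^+$, using the Bernoulli independence of past and future to factorize. The Lyapunov assertion is then immediate from the definition of the $\hat E^j$'s, with no separate transversality verification needed. To repair your proof you would need to either (i) replace the forward iteration with the backward one and show the past-determined attractor is transverse to the one-sided filtration, or (ii) work directly with the two-sided Oseledets decomposition as the paper does.
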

	
	
	\begin{proof}
		Let $\hat{\Sigma}=\{1,\dots,N\}^{\mathbb{Z}}$ and denote the unique two-sided extension of $\nu$ to $\hat{\Sigma}$ by $\mathbb{P}$. Moreover, let $\hat{T}\colon \hat{\Sigma}\times\mathbb{F}^d_{\tau} \mapsto\hat{\Sigma}\times\mathbb{F}^d_{\tau}$ be the two-sided extension of $T$. Let us denote by $\mathcal{F}$ the sigma-algebra generated by cylinder sets of $\hat{\Sigma}$. Denote by $\mathcal{F}^+$ (and respectively by $\mathcal{F}^-$)  the sub-sigma-algebras of $\mathcal{F}$ restricted to $\Sigma$ (and restricted to $\Sigma_-=\{1,\dots,N\}^{\mathbb{Z}_-}$).
		By the two-sided Oseledets's Theorem (see \cite[Theorem 3.4.11]{A}), for $\mathbb{P}$-almost every $\ii$ there exist $d_1,\dots,d_p$ dimensional subspaces $\hat{E}^1(\ii),\dots,\hat{E}^p(\ii)$ such that
		$$
                  \R^d=\hat{E}^1(\ii)\oplus\cdots\oplus\hat{E}^p(\ii),\quad A_{i_0}^{-1}\hat{E}^j(\ii)=\hat{E}^j(\sigma\ii),
		$$
		and
		$$\lim_{n\to\infty}\frac{1}{n}\log\|A_{i_n}^{-1}\cdots A_{i_0}^{-1}v\|=\ly{d_1+\cdots+d_j}
		$$
		for all $v\in \hat{E}^j(\ii)\setminus\{0\}$.
		By \cite[Remark~5.3.2]{A}, the function $\ii\mapsto \hat{E}^p(\ii)\oplus\cdots\oplus\hat{E}^j(\ii)$ is $\mathcal{F}^-$-measurable for all $j\in\{1,\dots,p\}$.
		By \cite[Theorem~1.6.13 and Theorem~1.7.5]{A}, there exists a $\hat{T}$-invariant and ergodic measure $\mu$ such that
		$$
		\mathrm{d}\mu(\ii,V_{p-1}\times\cdots\times V_1)=\mathrm{d}\delta_{\hat{E}^p(\ii)}\cdots \mathrm{d}\delta_{\hat{E}^p(\ii)\oplus\cdots\oplus\hat{E}^2(\ii)}\mathrm{d}\mathbb{P}(\ii),
		$$
		where $\delta_x$ denotes the Dirac-measure supported on $x$. Thus, by \cite[Theorem~1.7.2 and Corollary~1.7.6]{A}, there exists a measure $\mu_F$ on $\Sigma$ such that
		$$
		\mathbb{E}(\mu|\mathcal{F}^+)=\nu\times\mu_F
		$$
		and $\nu\times\mu_F$ is $T$-invariant. The last assertion of the theorem follows by the definition of the subspaces $\hat{E}^j$.
	\end{proof}

The measure $\mu_F$ is called the \emph{Furstenberg measure}. We note that the Furstenberg measure may not be unique, but our results are independent of the choice.  To simplify notation, we denote the elements of $\mathbb{F}^d_{1,\dots,d-1}$ by $\theta=(V_{d-1},\dots,V_1)$. The entropy of a quasi-Bernoulli measure $\nu$ is
$$
h_{\nu}=-\lim_{n\to\infty}\frac{1}{n}\sum_{|\iiv|=n}\nu([\iiv])\log\nu([\iiv]).
$$
Note that if $\nu$ is a Bernoulli measure obtained from a probability vector $(p_1,\dots,p_N)$, then $h_{\nu}=-\sum_{i=1}^Np_i\log p_i$.
Finally, for a subspace $V\subset\R^d$, we denote the orthogonal projection from $\R^d$ onto $V$ by $\proj_V$.

	\begin{theorem}[Main Theorem]\label{texact2}
		Let $\Phi=\left\{f_i(\xv)=A_i\xv+\tv_i\right\}_{i=1}^N$ be an IFS on $\R^d$ such that $\Alpha=\left\{A_1, \dots, A_N\right\}$ is a finite set of contractive non-singular $d\times d$ matrices. Then for every Bernoulli measure $\nu$ on $\Sigma$ with simple Lyapunov spectrum, the self-affine measure $\mu=\pi_*\nu$ is exact dimensional. Moreover, $\mu^T_{\theta^{\bot}}=(\proj_{\theta^{\bot}})_*\mu$ is exact dimensional and
		\begin{equation} \label{etoprojd}
			\dim\mu=\frac{h_{\nu}-H}{\ly{d}}+\sum_{i=1}^{d-1}\left(\frac{\ly{i+1}-\ly{i}}{\ly{d}}\right)\dim\mu^T_{V_i^{\bot}}
		\end{equation}
		for $\mu_F$-almost every $(V_{d-1},\dots,V_1)$, where $H=-\int\log\nu^{\pi^{-1}}_{\ii}([i_0])\dd\nu(\ii)$ and $\{\nu^{\pi^{-1}}_{\ii}\}$ is the family of conditional measures of $\nu$ defined by the measurable partition $\{\pi^{-1}(\pi(\ii))\}$.
	\end{theorem}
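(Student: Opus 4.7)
The plan is to adapt the Ledrappier-Young framework to the non-invertible, non-smooth setting of a self-affine IFS, with orthogonal projections onto flag complements playing the role that Pesin unstable manifolds play in the $C^{1+\alpha}$ theory. I would work on the extended skew-product system on $\R^{d+1}\times\mathbb{F}^d_{(1,\dots,d-1)}$ described in Section~\ref{sec:Lift}, where the extra coordinate lifts $\Phi$ to a strongly separated system and the flag coordinate carries the Oseledets splitting $V_{d-1}\subset\cdots\subset V_1$. By Theorem~\ref{thm:furst} the Furstenberg measure $\mu_F$ makes $\nu\times\mu_F$ invariant and ergodic under the natural skew-product, and simplicity of the spectrum guarantees that each $V_i$ is genuinely $(d-i)$-dimensional.

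Next I would construct a nested family of $\sigma$-invariant measurable partitions $\eta_1\succ\eta_2\succ\cdots\succ\eta_d$ such that the atom of $\eta_j$ through a typical point $(\ii,\theta,x)$ is, up to null sets, an affine piece of $x+V_j$, and form the associated Rokhlin conditional measures $\mu_{\ii}^{\eta_j}$. The proof then decomposes into three propositions. First, that $\mu^{\eta_d}$ is exact dimensional with local dimension $(h_{\nu}-H)/\ly{d}$; this is proved in the Feng-Hu manner, using entropy along the strongest contracting direction, with the quantity $H$ absorbing the ``overlap entropy'' lost when passing through $\pi$. Second, that every push-forward $\mu^T_{V_i^{\bot}}=(\proj_{V_i^{\bot}})_*\mu$ is exact dimensional; this is the technical heart, and I would attack it via a projected-entropy argument in the spirit of Feng-Hu, coupled with a Marstrand-type transversality for the random subspaces $V_i(\theta)$ available thanks to simplicity of the spectrum. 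Third, that $\mu^{\eta_{j-1}}$ admits a local product decomposition as an approximate Cartesian product of $\mu^{\eta_j}$ with $(\proj_{V_{j-1}^{\bot}})_*\mu^{\eta_{j-1}}$, which in turn inherits its dimension from $\mu^T_{V_{j-1}^{\bot}}$ by an ergodic averaging argument.

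With these three ingredients the dimensions add across scales: at the scale comparable to $e^{-\ly{j+1} n}$, the $\eta_j$-leaf contributes its dimension scaled by the ratio $\ly{j+1}/\ly{d}$, while the transverse projection contributes $\dim\mu^T_{V_j^{\bot}}$. Telescoping from $j=d$ down to $j=1$ and applying Abel summation, using the conventions $V_0^{\bot}=\R^d$ with $\dim\mu^T_{V_0^{\bot}}=\dim\mu$ and $V_d^{\bot}=\{0\}$, yields the formula \eqref{etoprojd}. The main obstacle is the second proposition: unlike in the smooth Ledrappier-Young setting, the subspaces $V_i(\theta)$ depend only measurably on $\theta$, the induced skew-product on $\R^{d+1}\times\mathbb{F}$ is neither invertible nor uniformly hyperbolic, and $\proj_{V_i^{\bot}}$ is merely measurable in the flag variable; consequently exact dimensionality of $\mu^T_{V_i^{\bot}}$ cannot be obtained by direct geometric slicing and must be extracted symbolically from a projected entropy analysis that replaces the holonomy estimates of the classical argument.
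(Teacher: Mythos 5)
Your architecture matches the paper's at the level of scaffolding: both lift $\Phi$ to $\R^{d+1}$ to get strong separation, both attach the flag bundle $\mathbb{F}$ carrying the Oseledets/Furstenberg directions, and both build a nested chain of measurable partitions along the stable filtration and then establish exact dimensionality level by level. However, two of your three propositions are stated or argued in a way that would not close.

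First, your base case is numerically wrong. You claim the conditional measure along the finest partition has local dimension $(h_\nu - H)/\ly{d}$; but that quantity is the first \emph{term} of the assembled Ledrappier--Young formula (it is $\sum_{k=0}^{d-1}(H^k - H^{k+1})/\ly{d}$, i.e.\ what the dimension would be if every stable exponent equalled $\ly{d}$), not the dimension of the conditional measure on the strongest stable leaves. In the paper's lifted picture the conditional measure along $\xi^d$ (the added coordinate direction) has dimension $H^d/(-\log\rho)$, and the conditional measure along the one--dimensional leaves $\xi^{d-1}$ of the genuine system contributes $(H^{d-1}-H^d)/\ly{d}$; if you feed $(h_\nu - H)/\ly{d}$ into the telescoping sum you double-count and the algebra does not reproduce \eqref{etoprojd}. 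Second, and more seriously, your method for the projected measures via ``Marstrand-type transversality for the random subspaces $V_i(\theta)$'' cannot work: the $V_i(\theta)$ are not generic in the Grassmannian, they are the Furstenberg directions, which typically form a null set and in degenerate cases a single point. A projection theorem that holds for Lebesgue-a.e.\ $V$ says nothing about them. The paper never invokes Marstrand; instead, Proposition~\ref{pLY2} pushes transversal balls through the cocycle using the invariance of the conditional system (Lemma~\ref{lem:invariance}, Lemma~\ref{lem:econt2}) and converts the resulting telescoping product into an ergodic average of the densities $w_\delta^i$, whose convergence is controlled by the integrability Lemma~\ref{ltoLY2b} and Maker's ergodic theorem, with a separate argument (Lemma~\ref{lem:angle}) to tame the subexponential degeneration of the angle between Oseledets and Furstenberg directions. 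That cocycle-invariance/Maker mechanism is the actual technical heart, and it is not captured by either ``projected entropy'' or transversality; you should replace the Marstrand step by this dynamical covering argument.
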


The equation \eqref{etoprojd} is called the \emph{Ledrappier-Young formula}. The following theorem solves the long standing open problem on the exact dimensionality of planar self-affine measures.

\begin{theorem}\label{cor:planar}
	If $\Phi=\left\{f_i(\xv)=A_i\xv+\tv_i\right\}_{i=1}^N$ is an IFS on $\R^2$ such that $\Alpha=\left\{A_1, \dots, A_N\right\}$ is a finite set of contractive non-singular $2\times 2$ matrices then for every Bernoulli measure $\nu$, the self-affine measure $\mu=\pi_*\nu$ is exact-dimensional and satisfies the Ledrappier-Young formula.
\end{theorem}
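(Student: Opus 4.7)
The plan is to split the argument according to whether the two Lyapunov exponents of $\nu$ coincide or not.

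\textbf{Case 1 (simple spectrum).} If $\ly{1} < \ly{2}$, then \eqref{eq:simplespect} holds with $d = 2$ and the conclusion is exactly Theorem~\ref{texact2}. The Ledrappier--Young formula then reads
\[
\dim \mu = \frac{h_\nu - H}{\ly{2}} + \frac{\ly{2} - \ly{1}}{\ly{2}} \dim \mu^T_{V_1^\bot}
\]
for $\mu_F$-almost every $V_1 \in G(1,2)$, with no further work needed.

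\textbf{Case 2 (coinciding exponents).} If $\ly{1} = \ly{2} =: \chi$, Theorem~\ref{texact2} is not directly applicable, but the formula \eqref{etoprojd} degenerates to $\dim \mu = (h_\nu - H)/\chi$, which I would establish directly. The main input is Oseledets: for every $\varepsilon > 0$ and $\nu$-almost every $\ii$,
\[
e^{-(\chi + \varepsilon) n} \leq \alpha_2(A_{i_0} \cdots A_{i_{n-1}}) \leq \alpha_1(A_{i_0} \cdots A_{i_{n-1}}) \leq e^{-(\chi - \varepsilon) n}
\]
for all sufficiently large $n$. Hence the cylinder $f_{i_0} \circ \cdots \circ f_{i_{n-1}}(\Lambda)$ is sandwiched between two balls of radii comparable to $e^{-\chi n}$ up to a sub-exponential factor $e^{\varepsilon n}$, and the compositions $f_{i_0}\circ\cdots\circ f_{i_{n-1}}$ act as asymptotic similarities on exponential scales. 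In this regime I would mimic the Feng--Hu~\cite{FH} argument for self-similar measures: Shannon--McMillan--Breiman gives $\nu([\iiv]) = e^{-(h_\nu + o(1))n}$ for $\nu$-typical $\iiv$ of length $n$; the identity $H = -\int\log\nu^{\pi^{-1}}_\ii([i_0])\dd\nu(\ii)$ for the fibre entropy of $\pi$ gives an effective count of $e^{(h_\nu - H + o(1))n}$ for level-$n$ cylinders with essentially distinct $\pi$-images that intersect the ball $B(\pi(\ii), e^{-\chi n})$; and combining the two yields
\[
\lim_{n \to \infty} \frac{\log \mu(B(\pi(\ii), e^{-\chi n}))}{-\chi n} = \frac{h_\nu - H}{\chi}
\]
for $\mu$-almost every $\pi(\ii)$, after letting $\varepsilon \to 0$.

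\textbf{Main obstacle.} The difficulty is concentrated entirely in Case~2, where the flag fibre is trivial ($p = 1$) so that all the invariant measurable partitions and conditional-measure machinery of Sections~\ref{sec:Lift}--\ref{sec:proofLY2} collapse and cannot be invoked. The hard part is therefore to reconstruct the ball-versus-cylinder comparison using only the sub-exponential control supplied by Oseledets, and to propagate the entropy identity for $H$ through the overlap-counting in such a way that the sub-exponential eccentricity of the ellipses $A_{i_0}\cdots A_{i_{n-1}}(B(0,1))$ is absorbed into the $o(1)$ error terms. This is the essential content needed to push the Feng--Hu scheme beyond the strictly self-similar setting.
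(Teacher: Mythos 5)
Your split into the cases $\ly{1}<\ly{2}$ and $\ly{1}=\ly{2}$ is exactly the paper's decomposition, and Case~1 (invoke Theorem~\ref{texact2}) matches the paper line for line. The difference is Case~2. The paper disposes of it in one sentence by citing Feng and Hu~\cite{FH}, whose projection-entropy theory already yields exact dimensionality with $\dim\mu=(h_\nu-H)/\ly{1}$ when the two Lyapunov exponents collapse (their general local-dimension bounds sandwich $\ldimloc$ and $\udimloc$ between $(h_\nu-H)/\ly{2}$ and $(h_\nu-H)/\ly{1}$, which coincide here). You instead propose to re-derive this from scratch by mimicking the Feng--Hu self-similar argument, extending it to the ``asymptotically conformal'' regime. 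That plan is sound in outline, and it correctly identifies the crux: absorbing the sub-exponential eccentricity $\alpha_1(A_{\iiv})/\alpha_2(A_{\iiv})$ of the cylinder images into $o(1)$ error terms. But two cautions if you carry it out. First, the step you summarize as an ``effective count of level-$n$ cylinders with essentially distinct $\pi$-images intersecting $B(\pi(\ii),e^{-\chi n})$'' is where the actual work lives; it is not a naive counting bound but goes through conditional-measure and projection-entropy machinery, closely parallel to the density argument used in the proof of Proposition~\ref{prop:LY3}. Second, Oseledets gives eccentricity control only $\nu$-almost everywhere, not uniformly over all words of length $n$, so atypical cylinders in a cover may still be badly eccentric and their contribution must be controlled separately. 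Both issues are manageable, but they are precisely why the paper defers to \cite{FH}; the economical route is simply to quote the existing theorem rather than rebuild it.
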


\begin{proof}
	By Feng and Hu~\cite{FH}, if $\ly{1}=\ly{2}$, then $\mu$ is exact dimensional and $\dim\mu = (h_\nu-H)/\ly{1}$. The case $\ly{1}<\ly{2}$ follows from Theorem~\ref{texact2}.
\end{proof}

Let us compare Theorem \ref{cor:planar} with the existing planar results. Besides solving the problem in the case of equal Lyapunov exponents, Feng and Hu \cite[Theorem 2.11]{FH} proved the Ledrappier-Young formula for matrix tuples of diagonal matrices. B\'ar\'any \cite[Theorem~2.7]{Ba} proved the formula for matrix tuples satisfying the dominated splitting condition by assuming the strong separation condition. In Theorem~\ref{cor:planar}, we do not assume any kind of separation condition. The statement holds for any contracting matrices, regardless of overlaps.

The main idea of the proof is to show that the self-affine measure has a conformal structure restricted to the Oseledets and Furstenberg directions. This is done in Propositions \ref{prop:1} and \ref{pLY2}. By using this observation, we show in Propositions \ref{prop:LY3} and \ref{pLY4} that the original measure has a local product-like structure with respect to these restrictions and hence, the exact-dimensionality and the Ledrappier-Young formula follow. We overcome the problems coming from the lack of separation conditions by lifting the system in one dimension higher; this is done in Section \ref{sec:Lift}.

\subsection{Ledrappier-Young formula with totally dominated splitting matrix tuples}

Before we state our second main theorem we introduce the totally dominated splitting condition for a finite family of matrices.
        We say that a finite set of contractive non-singular $d\times d$ matrices $\Alpha=\left\{A_1, \dots, A_N\right\}$ has \emph{dominated splitting} of index $i \in \{ 1,\ldots,d-1 \}$ if there exist constants $C\geq1$ and $0<\tau<1$ such that
        $$
        \frac{\alpha_{i+1}(A_{j_0}\cdots A_{j_n})}{\alpha_{i}(A_{j_0}\cdots A_{j_n})}\leq C\tau^n
        $$
        for all $j_0,\dots,j_n\in\left\{1,\dots,N\right\}$ and $n \in \N$.        Furthermore, we say that $\Alpha$ satisfies the \emph{totally dominated splitting} (TDS) if for every $i\in \{ 1,\dots,d-1 \}$ either $\mathcal{A}$ has dominated splitting of index $i$ or there exists a constant $C\geq1$ such that
        $$
        C^{-1}\leq\frac{\alpha_{i+1}(A_{j_0}\cdots A_{j_n})}{\alpha_{i}(A_{j_0}\cdots A_{j_n})}
        $$
	for all $j_0,\dots,j_n\in\left\{1,\dots,N\right\}$ and $n \in \N$.
	We call the set of indices, where $\alpha_{i}$ dominates $\alpha_{i+1}$, \emph{dominated indices} and we denote it by $\mathcal{D}(\mathcal{A})$. In other words, $\mathcal{D}(\mathcal{A}) = \{ i \in \{ 1,\ldots,d-1 \} : \mathcal{A}$ has dominated splitting of index  $i \}$.

By Oseledec's Multiplicative Ergodic Theorem, for any $\sigma$-invariant ergodic measure $\nu$ there are constants $0<\chi^1_{\nu}\leq\cdots\leq\chi^d_{\nu}$ such that
$$
\lim_{n\rightarrow\infty}\frac{1}{n}\log\alpha_i(A_{i_0}\cdots A_{i_n})=-\chi^i_{\nu}
$$
for $\nu$-almost every $\ii\in\Sigma$.
In particular, if $\mathcal{A}$ satisfies the TDS, then $\chi^i_{\nu}<\chi^{i}_{\nu}-\log\tau\leq\lye{\nu}{i+1}$ for every $i\in\mathcal{D}(\mathcal{A})$ and $\lye{\nu}{i}=\lye{\nu}{i+1}$ for $i\notin\mathcal{D}(\mathcal{A})$.

\begin{prop}[Bochi and Gourmelon \cite{BG}] \label{thm:BGprop}
	Let $\Alpha=\left\{A_1, \dots, A_N\right\}$ be a finite set of contractive non-singular $d \times d$ matrices satisfying the TDS. Then for every $i\in\mathcal{D}(\mathcal{A})$ there exists a family of subspaces $\left\{F_{\ii}^i\right\}_{\ii\in\Sigma}$ such that
	\begin{itemize}
		\item[(1)] $\dim F_{\ii}^i=d-i$,
		\item[(2)] $A_{i_0}F_{\ii}^i=F_{\sigma\ii}^i$,
		\item[(3)] $\|A_{i_n}\cdots A_{i_0}|F_{\ii}^i\|\leq C\alpha_{i+1}(A_{i_n}\cdots A_{i_0})$,
		\item[(4)] the mapping $\ii \mapsto F_\ii^i$, denoted by $F^i$, is H\"older continuous.
	\end{itemize}
	Moreover, if the elements of $\mathcal{D}(\mathcal{A})$ are $1\leq j_1<\cdots < j_k\leq d-1$, then we have $F_{\ii}^{j_k}\subset F_{\ii}^{j_{k-1}}\subset\cdots\subset F^{j_1}_{\ii}$.
\end{prop}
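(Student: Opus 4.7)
The plan is to construct each $F^i_{\ii}$ as a limit in the Grassmannian $G(d-i,d)$ of the ``most contracted direction'' subspaces of the finite products $A_{i_n}\cdots A_{i_0}$, exploiting the uniform singular-value gap guaranteed by the TDS assumption.

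For $i \in \mathcal{D}(\mathcal{A})$, write $M_n(\ii) = A_{i_n}\cdots A_{i_0}$. By the TDS condition, for $n$ large enough, uniformly in $\ii$, the singular-value gap is strict with $\alpha_{i+1}(M_n(\ii))/\alpha_i(M_n(\ii)) \leq C\tau^n$, and I define $E_n(\ii) \in G(d-i,d)$ to be the subspace spanned by the right singular vectors of $M_n(\ii)$ corresponding to the smallest $d-i$ singular values, so that $\|M_n(\ii)|E_n(\ii)\| = \alpha_{i+1}(M_n(\ii))$ by construction. The central step is to show that $\{E_n(\ii)\}$ is Cauchy in $G(d-i,d)$ with a rate uniform in $\ii$. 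Using the identity $M_{n+1}(\ii) = A_{i_{n+1}}M_n(\ii)$ and a Davis--Kahan--Wedin sin-$\theta$ inequality, the distance between the bottom $(d-i)$-dimensional right-singular subspaces of $M_n(\ii)$ and $M_{n+1}(\ii)$ is bounded above by a constant times $\alpha_{i+1}(M_n(\ii))/\alpha_i(M_n(\ii)) \leq C\tau^n$. This yields the limit $F^i_{\ii} := \lim_{n\to\infty}E_n(\ii)$.

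Property (1) holds by construction, and (3) follows from the finite-$n$ estimate $\|M_n(\ii)|E_m(\ii)\| \leq C\alpha_{i+1}(M_n(\ii))$ for all $m\geq n$ (a singular-value comparison together with uniform TDS bounds) by letting $m\to\infty$. For invariance (2), the identity $M_n(\ii) = M_{n-1}(\sigma\ii)A_{i_0}$ combined with a parallel sin-$\theta$ estimate yields $d_{G(d-i,d)}(A_{i_0}E_n(\ii), E_{n-1}(\sigma\ii)) = O(\tau^n)$, so passing to the limit gives $A_{i_0}F^i_{\ii} = F^i_{\sigma\ii}$. For H\"older continuity (4), if $\ii$ and $\jj$ agree in their first $N$ symbols then $E_n(\ii)=E_n(\jj)$ for all $n\leq N$, whence
\begin{equation*}
d_{G(d-i,d)}(F^i_{\ii}, F^i_{\jj}) \leq d(F^i_{\ii}, E_N(\ii)) + d(E_N(\jj), F^i_{\jj}) \leq \frac{2C\tau^N}{1-\tau},
\end{equation*}
which is H\"older in the standard symbolic metric. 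The nested inclusion $F^{j_k}_{\ii} \subset \cdots \subset F^{j_1}_{\ii}$ holds at the finite-$n$ level since the bottom $(d-j_k)$-singular subspace is contained in the bottom $(d-j_1)$-singular subspace for $j_1<j_k$, and inclusion is preserved in the Grassmannian limit.

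The principal technical obstacle is making the sin-$\theta$ estimate uniform in $\ii\in\Sigma$ and fully quantitative in terms of the TDS constants. One must control how the bottom right-singular subspace of a non-symmetric matrix moves under multiplication by a single contraction (on the left for the Cauchy estimate, on the right for the invariance estimate), in terms of the spectral gap at that scale. Working with the principal-angle distance on $G(d-i,d)$ rather than with individual singular vectors is essential, since the latter may rotate freely within each singular-value cluster; the TDS condition precisely delivers the gap needed to tame this rotation at the subspace level.
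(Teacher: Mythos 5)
The paper does not give a proof of this proposition: it is quoted directly from Bochi and Gourmelon~\cite{BG}, and the underlying construction is only alluded to much later, in the proof of Lemma~\ref{lTDSmanif}, which remarks that the $F^i_\ii$ arise as limits of eigenspaces of $(M_n^T M_n)^{1/2}$ (where $M_n=A_{i_n}\cdots A_{i_0}$) associated to the $d-i$ smallest eigenvalues and refers to~\cite{BG}. Your construction---taking $F^i_\ii$ as the limit of the bottom $(d-i)$-dimensional right-singular subspaces $E_n(\ii)$ of $M_n(\ii)$---is exactly that one; the Cauchy-via-singular-gap argument, the invariance estimate, the H\"older estimate from common prefixes, and the nestedness of the filtration are all the right moves and match the intended route.

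The place where your sketch has a genuine gap is property~(3). You claim $\|M_n(\ii)|E_m(\ii)\|\leq C\alpha_{i+1}(M_n(\ii))$ uniformly for all $m\geq n$ ``by a singular-value comparison together with uniform TDS bounds,'' but neither natural route delivers this when $i\geq 2$. If you appeal to the Grassmannian Cauchy estimate $d(F^i_\ii,E_n(\ii))=O(\tau^n)$ and write a unit $v\in F^i_\ii$ as $v=u+w$ with $w\in E_n(\ii)$, $u\perp E_n(\ii)$, you get only $\|u\|=O(\tau^n)$ and hence $\|M_nu\|\leq\alpha_1(M_n)\cdot O(\tau^n)$; but TDS gives $\tau^n\geq C^{-1}\alpha_{i+1}(M_n)/\alpha_i(M_n)$, so this upper bound is of size $\alpha_{i+1}(M_n)\cdot\alpha_1(M_n)/\alpha_i(M_n)$, which is unbounded relative to $\alpha_{i+1}(M_n)$ as soon as $\alpha_1/\alpha_i$ is large. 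If instead you estimate $\|M_n v\|\leq\|B^{-1}\|\,\|M_m v\|$ with $B=A_{i_m}\cdots A_{i_{n+1}}$, the constant $\|B^{-1}\|\,\|B\|$ grows exponentially in $m-n$. The statement~(3) is true, but because the displacement of $F^i_\ii$ from $E_n(\ii)$ is \emph{anisotropic}: the component of $v$ along the $k$-th right-singular direction of $M_n$, for $k\leq i$, is $O(\alpha_{i+1}(M_n)/\alpha_k(M_n))$, not merely $O(\tau^n)$, and only with this direction-dependent bound does $\|M_n u\|=O(\alpha_{i+1}(M_n))$ follow. Proving that refinement requires either a block computation on $M_n^T(A^TA)M_n$ (tracking the off-diagonal blocks $D_1^{-1}(\cdot)D_2$ rather than the Grassmannian metric), or---as in Bochi--Gourmelon and in the two-sided version recorded as Theorem~\ref{tdomsplitindex}---introducing the complementary, past-dependent subspaces $E^i_\ii$ together with uniform transversality $F^i_\ii\oplus E^i_\ii=\R^d$. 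You flag a uniformity-in-$\ii$ issue, but not this one, and your invoked ``singular-value comparison'' does not by itself close it.
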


In the case totally dominated splitting, we can define the Furstenberg measure $\mu_F$ on $\mathbb{F}_{d-j_k,\dots,d-j_1}^d$ by
$
\mu_F=(F_{\cdot}^{j_k},\dots,F_{\cdot}^{j_1})_*\nu.
$
Let us note that, by definition, $\dim\mu^T_{(F_{\ii}^i)^{\bot}}\leq\min\left\{i,\dim\mu\right\}$ for every $\ii\in\Sigma$ and $i\in\mathcal{D}(\Alpha)$.

\begin{theorem}[Main Theorem]\label{texactd}
	Let $\Phi=\left\{f_i(\xv)=A_i\xv+\tv_i\right\}_{i=1}^N$ be an IFS on $\R^d$ such that $\Alpha=\left\{A_1,\dots, A_N\right\}$ is a finite set of contractive non-singular $d\times d$ matrices satisfying the TDS. Then for every quasi-Bernoulli measure $\nu$ on $\Sigma$, the quasi self-affine measure $\mu=\pi_*\nu$ is exact dimensional and for each $i\in\mathcal{D}$, the measure $\mu^T_{(F^{i}_{\ii})^{\bot}}=(\proj_{(F^{i}_{\ii})^{\bot}})_*\mu$ is exact dimensional for $\nu$-almost every $\ii$. Moreover,
	\begin{equation*}
	\dim\mu=\frac{h_{\nu}-H}{\ly{d}}+\sum_{i\in\mathcal{D}}\left(\frac{\ly{i+1}-\ly{i}}{\ly{d}}\right)\dim\mu^T_{(F^{i}_{\ii})^{\bot}}
	\end{equation*}
	for $\nu$-almost every $\ii$, where $H=-\int\log\nu^{\pi^{-1}}_{\ii}([i_{0}])\dd\nu(\ii)$ and $\{\nu^{\pi^{-1}}_{\ii}\}$ is the family of conditional measures of $\nu$ defined by the measurable partition $\{\pi^{-1}(\pi(\ii))\}$.
\end{theorem}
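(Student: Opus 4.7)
The plan is to adapt the Ledrappier-Young scheme used for Theorem~\ref{texact2} to the setting of totally dominated splitting, exploiting the Hölder continuity of the strong-stable flag given by Proposition~\ref{thm:BGprop}. Since a quasi-Bernoulli measure can be replaced by an equivalent $\sigma$-invariant ergodic quasi-Bernoulli measure, I may assume $\nu$ is ergodic and apply Oseledets' theorem. In contrast to Theorem~\ref{texact2}, there is no need to pass to $\R^{d+1}\times\mathbb{F}$: the map $\ii\mapsto F^i_\ii$ is H\"older, so I work with the skew product $T(\xv,\ii)=(A_{i_0}\xv+\tv_{i_0},\sigma\ii)$ on $\R^d\times\Sigma$ (passing to the natural extension to make it invertible). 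For every $i\in\mathcal{D}(\Alpha)$, I build a nested family of measurable partitions $\eta^i$ whose atoms at scale $r$ are adapted to the flag $F^{j_k}_\ii\subset\cdots\subset F^{j_1}_\ii$: essentially small balls in $(F^i_\ii)^\perp$ crossed with long thin tubes in $F^i_\ii$. By Proposition~\ref{thm:BGprop}(3), these partitions are contracted by $T^{-1}$ at rates strictly smaller than $e^{-\ly{i+1}}$.

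Following the three-step scheme outlined in the introduction, the proof would then split into analogues of Propositions \ref{prop:1}, \ref{pLY2}, \ref{prop:LY3} and \ref{pLY4}. First, one shows that the conditional measures $\mu^{\eta^{j_k}}_{(\xv,\ii)}$ along the finest strong-stable partition are exact dimensional, of dimension $(h_\nu-H)/\ly{d}$; this is a Feng-Hu-type computation in which $H=-\int\log\nu^{\pi^{-1}}_{\ii}([i_0])\dd\nu(\ii)$ represents the loss of entropy caused by overlapping and $\ly{d}$ is the contraction rate along the strongest stable direction. Second, for each $i\in\mathcal{D}(\Alpha)$, one proves that the orthogonally projected measure $\mu^T_{(F^i_\ii)^\perp}$ is exact dimensional for $\nu$-almost every $\ii$. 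Finally, one establishes a local product-like structure between $\mu^{\eta^i}_{(\xv,\ii)}$ and a slice of $\mu^T_{(F^i_\ii)^\perp}$ via a Marstrand slicing argument along the H\"older foliation by $F^i$. Telescoping over the dominated indices $j_1<\cdots<j_k$ and noting that the terms with $i\notin\mathcal{D}(\Alpha)$ drop out because there $\ly{i+1}=\ly{i}$ yields the stated Ledrappier-Young formula.

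The main obstacle, as the authors flag in the structural overview of Section~\ref{sec:proofLYd}, is the second step: proving the exact dimensionality of the projected measures $\mu^T_{(F^i_\ii)^\perp}$. The projected system depends on $\ii$ through the H\"older subspace $(F^i_\ii)^\perp$, so it is not a genuine self-affine IFS but a random one with H\"older base dynamics. I would handle this by downward induction on $i\in\mathcal{D}(\Alpha)$, starting from the largest dominated index. At each level, the quotient system on $(F^i_\ii)^\perp$ inherits a form of TDS from $\Alpha$ together with a conditional entropy/Lyapunov identity; combined with an entropy-based lower bound on the local dimension and a covering upper bound obtained from the partition $\eta^i$, this forces the local dimensions of $\mu^T_{(F^i_\ii)^\perp}$ to be constant $\nu$-almost surely. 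A subtle point is that, because $F^i_\ii$ varies with $\ii$, conditional measures of $\mu^T_{(F^i_\ii)^\perp}$ along the dynamically-defined partitions must be compared with conditionals of $\mu$ along $\eta^i$; this comparison goes through only because of the H\"older continuity asserted in Proposition~\ref{thm:BGprop}(4), which is precisely what fails in the generality of Theorem~\ref{texact2} and explains why the latter requires a different, weaker conclusion and an extra ambient dimension.
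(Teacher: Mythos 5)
Your high-level plan matches the paper's: pass to an invertible skew product over $\Sigma^{\pm}$, exploit the H\"older continuity of $\ii\mapsto F^i_\ii$ from Proposition~\ref{thm:BGprop} to avoid the flag bundle, build $F$-invariant partitions $\xi^i$ adapted to the strong-stable flag, and run the three-step scheme (finest conditional measures are exact dimensional; projected measures are exact dimensional; local product structure). You also correctly flag the projection step (the analogue of Proposition~\ref{pLY2d}) as the main new technical difficulty, and the downward-then-upward induction over dominated indices is exactly what the paper does in Theorem~\ref{tLYSSC} and Proposition~\ref{pLY3}.

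There is, however, one genuine gap. You claim that, in contrast with Theorem~\ref{texact2}, ``there is no need to pass to $\R^{d+1}$'' and that ``passing to the natural extension'' makes the skew product invertible. The natural extension is abstractly invertible, but the concrete map $F(\xv,\ii_+)=(f_{i_0}(\xv),\sigma\ii_+)$ on $\Lambda\times\Sigma$ is invertible as a transformation only when the pieces $f_i(\Lambda)$ are pairwise disjoint, i.e.\ under the SSC: otherwise one cannot recover $i_0$ from $f_{i_0}(\xv)$, the partition $\mathcal P=\{f_i(\Lambda)\times\Sigma\}$ is not generating, and the crucial geometric comparison of atoms of $\mathcal P_0^{n-1}\vee\xi^{\max\mathcal{D}}$ with Euclidean balls (the analogue of Lemma~\ref{ltoLY1}, used in Proposition~\ref{pLY1}) fails. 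The paper therefore first proves Theorem~\ref{tLYSSC} assuming the SSC, and then handles the general quasi-Bernoulli case by the \emph{same} $\R^{d+1}$ lift used in Section~\ref{sec:Lift}: one appends a coordinate contracted by $\rho<\min_i\alpha_d(A_i)$, which simultaneously forces the SSC for $\widehat{\Phi}$ and creates a new strongest dominated direction; $\mu$ is then the projection of $\widehat{\mu}$ along that direction, and $H=-\int\log\nu^{\pi^{-1}}_\ii([i_0])\dd\nu(\ii)$ is precisely the conditional entropy lost in that projection. So the extra ambient dimension is still needed — only the flag bundle $\mathbb{F}$ is dispensable. An abstract natural extension does not substitute for this geometric lift.

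A secondary inaccuracy: you attribute the difficulty of the projection step mainly to the $\ii$-dependence of $(F^i_\ii)^\perp$. The paper's argument shows the sharper obstruction is that the transversal $(F^j_{\ii})^\perp\cap F^i_{\ii}$ between consecutive dominated indices $i<j$ can have dimension greater than one (the exponent $\chi^{j}_\nu$ has multiplicity $j-i$), so the exact one-dimensional multiplicativity of $\|A|K^{i+1}\|$ exploited in Proposition~\ref{pLY2} is unavailable; it must be replaced by the two-sided bounds $C^{-1}\alpha_{i_j}\leq\mathfrak m(A^{(n)}|e^{i_j})\leq\|A^{(n)}|e^{i_j}\|\leq C\alpha_{i_j}$ of Lemma~\ref{lTDSmanif}, which is the source of the correction constants $c_{n,l}=C^{\pm 2l}$ in the proof of Proposition~\ref{pLY2d}.
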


The main idea of the proof is essentially the same as that of Theorem \ref{texact2}. The main difference is in the verification of the conformal structure in the Oseledets and Furstenberg directions.

\subsection{Corollaries to the main theorems}
As a direct corollary to our results, under the respective assumptions, we can give a reformulation of the Kaplan-Yorke conjecture. This reformulation gives another perspective to verify or to disprove the conjecture.


\begin{cor}\label{cor:KYd}
	Under the assumptions of Theorem~\ref{texact2} or Theorem~\ref{texactd},
	\begin{equation*}
		\dim\mu=\min_{k\in\{1,\dots,d\}}\biggl\{k-1+\frac{h_{\nu}-\sum_{i=1}^{k-1}\ly{i}}{\ly{k}}\biggr\}
	\end{equation*}
	holds if and only if
	\begin{equation*}
		H=0\text{ and }\dim\mu^T_{V_i^{\bot}}=\min\left\{i,\dim\mu\right\}
	\end{equation*} for every $i\in\mathcal{D}$ and $\mu_F$-almost every $\theta$, where either $\mathcal{D}=\{1,\dots,d-1\}$ or $\mathcal{D}$ is the set of dominated indexes.
\end{cor}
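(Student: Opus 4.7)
The plan is to recognize the corollary as the equality case of the Ledrappier--Young formula, where two a priori inequalities become sharp simultaneously. Abbreviate $D=\dim\mu$ and $d_i=\dim\mu^T_{V_i^{\bot}}$ (in the TDS case $V_i$ is shorthand for $F^i_{\ii}$ and the sum below runs over $\mathcal D$), so that Theorem~\ref{texact2} and Theorem~\ref{texactd} both read
\begin{equation*}
D=\frac{h_\nu-H}{\ly d}+\sum_{i\in\mathcal D}\frac{\ly{i+1}-\ly i}{\ly d}\,d_i.
\end{equation*}
Two elementary a priori facts are available. First, $H\ge 0$, since $\nu^{\pi^{-1}}_{\ii}([i_0])\le 1$ forces $-\log\nu^{\pi^{-1}}_{\ii}([i_0])\ge 0$ and $H$ is its $\nu$-integral. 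Second, $d_i\le\min\{i,D\}$, because $V_i^{\bot}$ is $i$-dimensional and orthogonal projection is $1$-Lipschitz, so it can neither exceed the ambient dimension of the target nor increase $\dim\mu$. Every coefficient $(\ly{i+1}-\ly i)/\ly d$ with $i\in\mathcal D$ is strictly positive by the definition of $\mathcal D$.

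Introduce the piecewise linear map
\begin{equation*}
g(t)=\frac{h_\nu}{\ly d}+\sum_{i\in\mathcal D}\frac{\ly{i+1}-\ly i}{\ly d}\min\{i,t\},\qquad t\ge 0.
\end{equation*}
Combining the Ledrappier--Young formula with the a priori bounds gives $D\le g(D)$, and because every weight is strictly positive, equality holds \emph{if and only if} $H=0$ and $d_i=\min\{i,D\}$ for each $i\in\mathcal D$ (for $\mu_F$-a.e.\ $\theta$ in the simple-spectrum case, respectively $\nu$-a.e.\ $\ii$ in the TDS case). On $[k-1,k]$ the slope of $g$ equals $\sum_{i\in\mathcal D,\,i\ge k}(\ly{i+1}-\ly i)/\ly d=(\ly d-\ly k)/\ly d\in[0,1)$; the telescoping survives the restriction to $\mathcal D$ because $\ly{i+1}=\ly i$ off $\mathcal D$. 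Hence $t\mapsto t-g(t)$ is strictly increasing and admits a unique zero $D^{*}$, and $D=D^{*}$ precisely in the equality case above.

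It remains to identify $D^{*}$ with $\min_{1\le k\le d}f(k)$, where $f(k)=k-1+(h_\nu-\sum_{i=1}^{k-1}\ly i)/\ly k$. Let $k^{*}$ be the unique index with $D^{*}\in(k^{*}-1,k^{*}]$; splitting the sum in $g(D^{*})=D^{*}$ at $i=k^{*}$ and applying the Abel identity $\sum_{i=1}^{k^{*}-1}(\ly{i+1}-\ly i)i=(k^{*}-1)\ly{k^{*}}-\sum_{i=1}^{k^{*}-1}\ly i$ yields directly $D^{*}=f(k^{*})$. A short computation then gives
\begin{equation*}
f(k+1)-f(k)=\frac{\ly{k+1}-\ly k}{\ly{k+1}}\cdot\frac{\ly k-\bigl(h_\nu-\sum_{i=1}^{k-1}\ly i\bigr)}{\ly k},
\end{equation*}
whose sign passes from non-positive to non-negative exactly when $\sum_{i=1}^{k}\ly i$ first reaches $h_\nu$, which confirms that $k^{*}$ is a global minimizer of $f$ and thus $D^{*}=\min_k f(k)$. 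There is no genuinely hard step; the two points that deserve attention are the verification that every slope of $g$ stays strictly below $1$ (which yields uniqueness of the fixed point) and that each weight $(\ly{i+1}-\ly i)/\ly d$ with $i\in\mathcal D$ is strictly positive (so that the aggregate equality $D=g(D)$ forces equality separately in $H\ge 0$ and in every $d_i\le\min\{i,D\}$, rather than merely in the weighted sum).
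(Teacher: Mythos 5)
Your proof is correct, and the paper itself does not supply one --- the corollary is stated as following from the main theorems by ``simple algebra,'' so you are filling in exactly what the authors left to the reader. The two structural ingredients you isolate are the right ones: the a priori bounds $H\ge 0$ and $\dim\mu^T_{V_i^\perp}\le\min\{i,\dim\mu\}$ (the latter is even remarked in the paper just before Theorem~\ref{texactd}), and the fact that the coefficients $(\ly{i+1}-\ly i)/\ly d$ are strictly positive precisely on $\mathcal D$, so an aggregate equality in the Ledrappier--Young formula forces equality in each bound separately. The fixed-point reformulation via $g$ and the slope estimate $(\ly d-\ly k)/\ly d<1$ give uniqueness cleanly, and the Abel-summation identity correctly converts the fixed point into the Lyapunov-dimension expression $\min_k f(k)$; I checked the increment formula $f(k+1)-f(k)$ and it is accurate. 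One small caveat worth being aware of: when $h_\nu>\sum_{i=1}^{d}\ly i$, the fixed point $D^*$ exceeds $d$ and there is no $k^*\in\{1,\dots,d\}$ with $D^*\in(k^*-1,k^*]$; however your slope/monotonicity analysis of $f$ still shows $\min_k f(k)=f(d)=D^*$, and $D\le d<D^*$ makes both sides of the equivalence vacuously false, so the statement survives. It would have been slightly cleaner to say this explicitly, but it does not constitute a gap.
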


In the view of Corollary \ref{cor:KYd}, one may expect that there is an equivalent characterisation of the Kaplan-Yorke conjecture for ergodic invariant measures of hyperbolic diffeomorphisms acting on smooth Riemannian manifolds. This would mean that the Kaplan-Yorke conjecture holds if and only if there is no dimension drop for typical projections along the tangent bundles of $C^{1+\alpha}$ stable and unstable leafs.

For planar self-affine measures, Falconer and Kempton \cite{FK2} 
have recently shown that if the projected measures $\mu^T_{\theta^{\bot}}$ are exact dimensional for $\mu_F$-almost every $\theta$, then the $\mu_F$-typical value of the dimension of $\mu^T_{\theta^{\bot}}$ is minimal except possibly for at most one direction.

We say that $\Phi$ satisfies the \textit{strong open set condition} (SOSC) if there exists an open and bounded set $U\subset\R^d$ intersecting the self-affine set, $U \cap \Lambda \ne \emptyset$, such that
$$
\bigcup_{i=1}^Nf_i(U)\subseteq U
$$
and $f_i(U)\cap f_j(U)=\emptyset$ for every $i\neq j$.
The SOSC is a milder condition than \textit{strong separation condition} (SSC), which holds if $f_i(\Lambda)\cap f_j(\Lambda)=\emptyset$ for $i\neq j$.

\begin{cor}\label{c:LY}
  Under the assumptions of Theorem~\ref{texact2} or Theorem~\ref{texactd}, if $\Phi$ satisfies the SOSC and $\nu([i])>0$ for all $i\in \{1,\dots,N \}$, then
  \begin{equation*}
    H=-\int\log\nu^{\pi^{-1}}_{\ii}([i_{0}])\dd\nu(\ii)=0.
  \end{equation*}
\end{cor}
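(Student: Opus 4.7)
The plan is to exhibit, under the SOSC, Borel sets $B_1,\dots,B_N\subset\R^d$ such that $\pi^{-1}(B_i)$ agrees with the cylinder $[i]$ modulo $\nu$-null sets. Once this is achieved, the partition $\alpha=\{[1],\dots,[N]\}$ lies (modulo $\nu$) inside the $\sigma$-algebra $\pi^{-1}(\mathcal{B}(\R^d))$, which is equivalent to $\nu^{\pi^{-1}}_{\ii}([i_0])=1$ for $\nu$-almost every $\ii$ and thereby to $H=0$. The natural candidates are $B_i=f_i(U)$, with $U$ the open set provided by the SOSC: the disjointness $f_i(U)\cap f_j(U)=\emptyset$ for $i\neq j$ separates the cylinders automatically, so it remains only to verify that $\pi(\ii)\in f_{i_0}(U)$ for $\nu$-a.e.\ $\ii$.

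By the remark following the definition of a quasi-Bernoulli measure, we may assume $\nu$ is $\sigma$-invariant and ergodic; since the vanishing of $H$ is characterised by $\pi^{-1}(\mathcal{B})$-measurability of $\alpha$ modulo null sets and equivalent measures have the same null sets, this replacement does not affect the statement. Set $E=\pi^{-1}(U)$. Being the $\pi$-preimage of a nonempty open set meeting $\Lambda$, $E$ is nonempty and open in $\Sigma$, hence contains some cylinder $[\iiv]$. The hypothesis $\nu([i])>0$ for every $i$, combined with the quasi-Bernoulli estimate, yields $\nu([\iiv])>0$, so $\nu(E)>0$.

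The crux is to upgrade positivity of $\nu(E)$ to $\nu(E)=1$. The key observation is the inclusion $\sigma^{-1}(E)\subset E$: if $\pi(\sigma\ii)\in U$, then $\pi(\ii)=f_{i_0}(\pi(\sigma\ii))\in f_{i_0}(U)\subset\bigcup_j f_j(U)\subset U$, where the last inclusion is precisely the nesting part of the SOSC. Since $\nu$ is $\sigma$-invariant, $\nu(E\setminus\sigma^{-1}E)=0$, and intersecting over $n\geq0$ produces a genuinely $\sigma$-invariant set of the same $\nu$-measure as $E$. Ergodicity forces this common value to lie in $\{0,1\}$, and positivity forces it to be $1$.

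Finally, $\sigma$-invariance yields $\pi(\sigma\ii)\in U$ for $\nu$-a.e.\ $\ii$, i.e.\ $\pi(\ii)\in f_{i_0}(U)$ for $\nu$-a.e.\ $\ii$. Combined with the disjointness of the $f_i(U)$, this gives $[i]=\pi^{-1}(f_i(U))$ modulo $\nu$ for each $i$, which completes the argument. I do not anticipate a serious obstacle; the only delicate points are the reduction to an invariant ergodic measure in the quasi-Bernoulli case and the observation that it is the nesting condition in the SOSC (and not merely the disjointness of the $f_i(U)$) that makes $E=\pi^{-1}(U)$ backward invariant under the shift.
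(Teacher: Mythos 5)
Your proof is correct and arrives at the conclusion by a genuinely different, and shorter, route than the paper. Both arguments open identically: the set $E=\pi^{-1}(U)$ is backward-invariant under the shift (the nesting $\bigcup_i f_i(U)\subset U$ is exactly what is needed), so ergodicity together with positivity of $\nu(E)$ gives $\nu(E)=1$. From there the paper proves the strictly stronger statement that $\pi^{-1}(\pi(\ii))$ is a singleton for $\nu$-a.e.\ $\ii$: it first extracts $\mu(\partial U)=0$, then argues that every symbolic non-injectivity point must eventually land in $\bigcup_{\iiv}f_{\iiv}(\partial U)$, and finally controls the $\mu$-measure of that union by iterating the self-affine identity $\mu=\sum_i p_i\,\mu\circ f_i^{-1}$. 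Your argument bypasses the boundary and injectivity analysis altogether: once $\nu(E)=1$ and $\sigma$-invariance yield $\pi(\ii)\in f_{i_0}(U)$ for a.e.\ $\ii$, the disjointness part of the SOSC alone gives $[i]=\pi^{-1}(f_i(U))$ modulo $\nu$, i.e.\ the first-level cylinder partition is $\pi^{-1}(\mathcal{B}(\R^d))$-measurable modulo $\nu$, so $\nu^{\pi^{-1}}_{\ii}([i_0])=1$ a.e.\ and $H=0$. You give up the byproduct that $\pi$ is essentially injective, which the corollary does not need, and in exchange you avoid both the push-forward iteration and the bookkeeping required to sum $\mu(f_{\iiv}(\partial U))$ over all words. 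The two points you flag as delicate --- passing to an equivalent $\sigma$-invariant ergodic quasi-Bernoulli measure, and the fact that it is the nesting (not merely the disjointness) that makes $E$ backward-invariant --- are both handled correctly.
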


\begin{proof}
	It is enough to show that $\pi^{-1}(\pi(\ii))$ is a singleton for $\nu$-almost every $\ii$. Let us define two sets,
	$$
	I=\left\{\ii\in\Sigma:\pi(\ii)\in U\right\}\quad \text{and} \quad C=\left\{\ii\in\Sigma:\text{ there exist }\jj\neq\ii\text{ such that }\pi(\ii)=\pi(\jj)\right\},
	$$
	where $U$ is the open set of the SOSC.
	It is easy to see that $\sigma^{-1}I\subseteq I$, therefore by ergodicity either $\nu(I)=0$ or $\nu(I)=1$. Since $U\cap\Lambda\neq\emptyset$ there exists a cylinder set $f_{\iiv}(\Lambda)$ that $f_{\iiv}(\Lambda)\subset U$. Hence, $\nu(I)=\mu(U)\geq\mu(f_{\iiv}(\Lambda))\geq\nu([\iiv])>0$ and therefore $\mu(U)=\nu(I)=1$, i.e. $\mu(\partial U)=0$.
	
	On the other hand, if $\ii\in C$ then there exists $\jj\neq\ii$ such that $\pi(\ii)=\pi(\jj)$. Let $n=\min\left\{k:i_k\neq j_k\right\}$. Then $\pi(\sigma^{n}\ii)=\pi(\sigma^{n}\jj)$ but the first symbols of $\sigma^{n}\ii$ and $\sigma^{n}\jj$ differ. Since $\Lambda$ is contained in the closure of $U$ and $f_i(U)\cap f_j(U)=\emptyset$ for every $i\neq j$ it is only possible if $\pi(\sigma^{n}\ii)\in \partial f_{i_n}(U)$. Thus, $\pi(\sigma^{n+1}\ii)\in\partial U$ and therefore $\pi(C)\subseteq\bigcup_{n=0}^{\infty}\bigcup_{|\iiv|=n}f_{\iiv}(\partial U)$.
	Hence, by \eqref{eq:s-a_measure},
	$$
	\nu(C)\leq\mu(\pi(C))\leq\mu\biggl(\bigcup_{n=0}^{\infty}\bigcup_{|\iiv|=n}f_{\iiv}(\partial U)\biggr)\leq\sum_{n=0}^{\infty}\sum_{|\iiv|=n}\mu(f_{\iiv}(\partial U))=\sum_{n=0}^{\infty}\mu(\partial U)=0,
	$$
	which is what we wanted to show.
\end{proof}

As an easy consequence of Corollary~\ref{cor:KYd}, Corollary~\ref{c:LY}, and \cite[Theorem~2.6]{PS}, we get the following.

\begin{cor}
	Let $\Phi=\left\{f_i(\xv)=A_i\xv+\tv_i\right\}_{i=1}^N$ be an IFS on $\R^2$ such that $\Alpha=\left\{A_1, \dots, A_N\right\}$ is a finite set of contractive non-singular $2\times 2$ matrices. Let us assume that $\Phi$ satisfies the SOSC. Then for every Bernoulli measure $\nu$, if $\ldimh\mu_F\geq\min\left\{\dim\mu,2-\dim\mu\right\}$, then we have
	\begin{equation*}
		\dim\mu=\min\left\{\frac{h_{\nu}}{\chi^{1}_{\nu}},1+\frac{h_{\nu}-\chi^{1}_{\nu}}{\chi^{2}_{\nu}}\right\}.
	\end{equation*}
\end{cor}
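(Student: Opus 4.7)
The plan is to apply Corollary~\ref{cor:KYd} after verifying its two equivalent conditions, namely $H=0$ and $\dim\mu^T_{V_1^{\bot}}=\min\{1,\dim\mu\}$ for $\mu_F$-almost every $V_1\in G(1,2)$. Once both are in place, the corollary delivers the Kaplan--Yorke formula immediately, with $d=2$ and $\mathcal{D}=\{1\}$.

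First, I would invoke Theorem~\ref{cor:planar} to obtain exact-dimensionality of $\mu$. The SOSC hypothesis then lets me apply Corollary~\ref{c:LY} to conclude $H=0$. The side hypothesis $\nu([i])>0$ appearing in Corollary~\ref{c:LY} can be arranged by restricting to the support of the probability vector defining the Bernoulli measure $\nu$ (any map $f_i$ with $p_i=0$ can simply be removed from the IFS without affecting either side of the claim).

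The substantive ingredient is the projection identity. This is a Marstrand-type statement, but with the directions distributed according to $\mu_F$ on $G(1,2)$ rather than according to Lebesgue measure on the circle. The dimensional hypothesis $\ldimh\mu_F\geq\min\{\dim\mu,2-\dim\mu\}$ is exactly the lower bound on the Hausdorff dimension of the direction measure that \cite[Theorem~2.6]{PS} requires in order to guarantee maximality of typical projections of a measure in $\R^2$. Applied to $\mu$ and $\mu_F$, that theorem yields $\dim\mu^T_{V_1^{\bot}}=\min\{1,\dim\mu\}$ for $\mu_F$-a.e.\ $V_1$, which is the second condition of Corollary~\ref{cor:KYd}.

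Combining the two, Corollary~\ref{cor:KYd} produces
\begin{equation*}
\dim\mu=\min\left\{\frac{h_{\nu}}{\ly{1}},\,1+\frac{h_{\nu}-\ly{1}}{\ly{2}}\right\}.
\end{equation*}
The degenerate case $\ly{1}=\ly{2}$ needs a separate, easier treatment: Feng--Hu (as used in the proof of Theorem~\ref{cor:planar}) give $\dim\mu=(h_\nu-H)/\ly{1}$, which together with $H=0$ from Corollary~\ref{c:LY} reduces to $h_\nu/\ly{1}$, and both entries of the minimum coincide with this value. I expect the only real obstacle to be confirming that the hypothesis on $\ldimh\mu_F$ matches the form demanded by \cite[Theorem~2.6]{PS}, i.e.\ that with an exact-dimensional source measure $\mu$ on $\R^2$ and a Grassmannian direction measure $\mu_F$ of lower Hausdorff dimension at least $\min\{\dim\mu,2-\dim\mu\}$, the conclusion is precisely the maximality of $\dim\mu^T_{V_1^\bot}$; once this is lined up, the deduction is mechanical.
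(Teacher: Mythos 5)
Your proposal is correct and follows essentially the same route the paper takes, which is simply to combine Corollary~\ref{cor:KYd} (the equivalence with the Kaplan--Yorke formula), Corollary~\ref{c:LY} (giving $H=0$ from SOSC), and \cite[Theorem~2.6]{PS} (giving $\dim\mu^T_{V_1^{\bot}}=\min\{1,\dim\mu\}$ for $\mu_F$-a.e.\ direction under the stated lower bound on $\ldimh\mu_F$). Your added remarks on discarding maps with $p_i=0$ and on the degenerate case $\ly{1}=\ly{2}$ are sensible elaborations of details the paper leaves implicit; they do not constitute a different approach.
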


We remark that Hochman and Solomyak \cite{HS} have recently announced a method to calculate the dimension of the Furstenberg measure $\mu_F$ for $2\times 2$ matrices. Many of the recent works on dimensions of self-affine sets rely on properties of the Furstenberg measure; see e.g.\ Morris and Shmerkin \cite{MS} and Rapaport \cite{Rapa}.

\begin{cor}
	Let $\Phi=\left\{f_i(\xv)=A_i\xv+\tv_i\right\}_{i=1}^N$ be an IFS on $\R^d$ such that $\Alpha=\left\{A_1,\dots, A_N\right\}$ is a finite set of contractive non-singular $d\times d$ matrices satisfying the TDS. Moreover, let us assume that $\Phi$ satisfies the SOSC. Then for every quasi-Bernoulli measure $\nu$ , if
	\begin{equation*}
	\ldimh(F^i)_*\nu+\dim\mu > i(d-i+1)\quad\text{or}\quad
	\ldimh(F^i)_*\nu \geq i(d-i-1)+\min\left\{i,\dim\mu\right\}
	\end{equation*}
	for every $i\in\mathcal{D}(\Alpha)$, then we have
	\begin{equation}\label{eq:lyapdim}
	\dim\mu=\min_{k\in \{1,\dots,d\}}\biggl\{k-1+\frac{h_{\nu}-\sum_{i=1}^{k-1}\ly{i}}{\ly{k}}\biggr\}.
	\end{equation}
\end{cor}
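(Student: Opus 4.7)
The plan is to verify the equivalent condition given by Corollary~\ref{cor:KYd} in the totally-dominated-splitting setting. That is, I want to show both $H=0$ and $\dim\mu^T_{(F^i_\ii)^\bot}=\min\{i,\dim\mu\}$ for every $i\in\mathcal{D}(\mathcal{A})$ and $\nu$-almost every $\ii$. Note that Theorem~\ref{texactd} already ensures exact dimensionality of $\mu$ and of each projected measure $\mu^T_{(F^i_\ii)^\bot}$, so the quantities being compared are well-defined.

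The first condition follows immediately from Corollary~\ref{c:LY}, since the SOSC is assumed and $\nu([i])>0$ for every $i\in\{1,\dots,N\}$. For the second condition, the key ingredient is a Marstrand--Mattila type projection theorem, namely \cite[Theorem~2.6]{PS}, applied to random $i$-planes. The projection $\proj_{(F^i_\ii)^\bot}$ is governed by the push-forward measure $(F^i)_*\nu$ on the Grassmannian $G(d-i,d)$, which via the orthogonal-complement diffeomorphism can be identified with a measure on $G(i,d)$ without changing its Hausdorff dimension. The two alternative bounds in the statement,
\begin{equation*}
\ldimh(F^i)_*\nu+\dim\mu>i(d-i+1) \quad\text{or}\quad \ldimh(F^i)_*\nu\geq i(d-i-1)+\min\{i,\dim\mu\},
\end{equation*}
are designed to match the two quantitative regimes in \cite[Theorem~2.6]{PS}, roughly corresponding to $\dim\mu>i$ and $\dim\mu\leq i$ respectively. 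Applying that theorem yields $\dim(\proj_V)_*\mu=\min\{i,\dim\mu\}$ for $(F^i)_*\nu$-almost every $V$, and hence $\dim\mu^T_{(F^i_\ii)^\bot}=\min\{i,\dim\mu\}$ for $\nu$-almost every $\ii$, as required.

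With both conditions verified, Corollary~\ref{cor:KYd} immediately produces the claimed Kaplan--Yorke formula \eqref{eq:lyapdim}. The principal difficulty lies in the second step: one must check that the numerical hypotheses in the corollary correspond precisely to the quantitative inputs of \cite[Theorem~2.6]{PS} for $i$-dimensional projections (rather than the classical one-dimensional case), and that the theorem is indeed applicable when the projection direction is distributed according to the possibly fractal measure $(F^i)_*\nu$ on the Grassmannian instead of the Haar measure. Everything else reduces to bookkeeping via the Ledrappier--Young formula already provided by Theorem~\ref{texactd} and the equivalence furnished by Corollary~\ref{cor:KYd}.
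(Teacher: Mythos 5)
You have the correct high-level strategy: reduce to verifying the two hypotheses in Corollary~\ref{cor:KYd}, observe that $H=0$ follows from Corollary~\ref{c:LY} (thanks to the SOSC), and then hope to derive $\dim\mu^T_{(F^i_\ii)^\bot}=\min\{i,\dim\mu\}$ from a Marstrand--Mattila type projection result. But the step you yourself single out as ``the principal difficulty'' is exactly the step the paper has to work for, and your outline does not resolve it. In particular, you appeal to a single black-box result, \cite[Theorem~2.6]{PS}, applied to ``random $i$-planes,'' and you stop there. The paper instead splits into two genuinely different regimes and invokes two different results. When $\dim\mu\le i$, it uses Egorov's theorem to pass to a restriction $\mu|_E$ with finite $(\dim\mu-\varepsilon)$-energy and then applies the Falconer--Mattila exceptional-set estimate \cite[Theorem~2.2(i)]{FaMa}, which says the set of $V\in G(i,d)$ for which $(\proj_V)_*\mu|_E$ fails to have finite $(\dim\mu-\varepsilon)$-energy has Hausdorff dimension at most $i(d-i-1)+\dim\mu-\varepsilon$; the hypothesis $\ldimh(F^i)_*\nu\ge i(d-i-1)+\min\{i,\dim\mu\}$ is tailored to force $(F^i)_*\nu$ to give zero mass to that exceptional set. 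When $\dim\mu>i$, the relevant estimate is \cite[Proposition~6.1]{PS}, which requires the other numerical hypothesis $\ldimh(F^i)_*\nu+\dim\mu>i(d-i+1)$ to conclude that almost every projection has dimension $i$ (full). The two alternative hypotheses in the corollary are not two bounds matching ``two quantitative regimes of Theorem 2.6'' as you suggest; they are separate inputs to two separate exceptional-set theorems, mediated by an Egorov argument you never mention. (Theorem~2.6 of \cite{PS} is what the paper uses for the \emph{planar} corollary, not this one.)

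So the gap is concrete: the Egorov reduction to a finite-energy restriction, the case distinction $\dim\mu\le i$ versus $\dim\mu>i$, and the invocation of \cite[Theorem~2.2(i)]{FaMa} together with \cite[Proposition~6.1]{PS} are all missing. Without these you cannot conclude that the $(F^i)_*\nu$-typical projection achieves the expected dimension, because the measure on the Grassmannian is itself fractal and the whole point is to verify it sidesteps the exceptional sets. The remaining algebra (combining Theorem~\ref{texactd}, Corollary~\ref{c:LY}, and the established projection dimensions to get \eqref{eq:lyapdim}) is routine and your sketch of it is fine; the paper in fact does this algebra directly rather than literally citing Corollary~\ref{cor:KYd}, but that is a cosmetic difference.
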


\begin{proof}[Proof.]
Let $i \in \{1\ldots d\}$ and let $\lambda$ be a Radon measure with finite $t$-energy with $t<i$. Then there exists a set $X_t\subset G(i,d)$ with $\dimh X_t \leq i(d-i-1)+t$ such that for all $V\in G(i,d)\setminus X_t$, the measure $(\proj_V)_*\lambda$ has finite $t$-energy; see \cite[Theorem~2.2(i)]{FaMa}.
Observe that for every $i\in\mathcal{D}$, we have $\ldimh((F^i)^{\bot})_*\nu=\ldimh(F^i)_*\nu$.

Let us first assume that $\dim\mu\leq i$. By Egorov's Theorem and the exact dimensionality of $\mu=\pi_*\nu$ and $\mu^T_{(F^i_{\ii})^{\bot}}$, for every $\varepsilon>0$ there exists a set $E$ with $\mu(E)>1-\varepsilon$ such that $\mu|_E$ has finite $(\dim\mu-\varepsilon)$-energy. By choosing $\varepsilon>0$ small enough and $\lambda=\mu|_E$ we get that if
$$\ldimh(F^i)_*\nu > i(d-i-1)+\dim\mu-\varepsilon$$ then for
$\nu$-almost every $\ii$
$$
\dim\mu^T_{(F_{\ii}^i)^{\bot}}\geq\udimh\left(\mu|_E\right)^T_{(F_{\ii}^i)^{\bot}}\geq\ldimh\left(\mu|_E\right)^T_{(F_{\ii}^i)^{\bot}}\geq\dim\mu-\varepsilon.
$$
Since  $\varepsilon>0$ was arbitrary, we get that if $\dim\mu\leq i$, then $\dim\mu^T_{(F_{\ii}^i)^{\bot}}=\dim\mu$ for $\nu$-almost every $\ii$.

If $\dim\mu>i$, then by Egorov's Theorem choosing $\varepsilon>0$ sufficiently small, $\dim\mu|_E>i$ and $\mu|_E$ has finite $(\dim\mu-\varepsilon)$-energy. Thus, by \cite[Proposition~6.1]{PS} we get that if
$$
\ldimh(F^i)_*\nu+\dim\mu-\varepsilon> i(d-i+1),
$$
then $\dim\mu^T_{(F_{\ii}^i)^{\bot}}\geq\dim\left(\mu|_E\right)^T_{(F_{\ii}^i)^{\bot}}=i$ for $\nu$-almost every $\ii$. Therefore for every $i\in\mathcal{D}$
$$\dim\mu^T_{(F_{\ii}^i)^{\bot}}=\min\left\{i,\dim\mu\right\}$$
for $\nu$-almost every $\ii$.
By simple algebra, we see that Theorem~\ref{texactd} and Corollary~\ref{c:LY} imply \eqref{eq:lyapdim}.
\end{proof}

To finish this section, we exhibit a concrete example of a family of matrices satisfying the TDS. Let us recall the definition of totally postive matrices from \cite[Definition~1.1]{Pi}. Let $I_p^d=\left\{\iiv=(i_1,\dots,i_p):1\leq i_1<\cdots<i_p\leq d\right\}$ for $p \in \{1,\ldots,d\}$, and for any $\iiv,\jjv\in I_p^d$ let
$$
A[\iiv,\jjv]:=\det(a_{i_k,j_l})_{k,l=1}^p,
$$
where $A=(a_{i,j})_{i,j=1}^d$. Thus $A[\iiv,\jjv]$ is the minor of $A$ determined by $\iiv,\jjv$. We say that a $d \times d$ matrix $A$ is \textit{strictly totally positive} (STP) if $A[\iiv,\jjv]>0$ for all $\iiv,\jjv\in I_p^d$ and $p \in \{1,\ldots, d-1\}$. By definition, the set of matrix tuples formed by the STP matrices is an open subset of the set of all matrix tuples.
For example, in the two dimensional case, the STP matrices are the matrices with strictly positive elements and positive determinant. These matrices map the first quadrant of the plane strictly into itself.

\begin{example} \label{ex:STP}
	A finite set of contractive STP matrices satisfies the TDS with $\mathcal{D}=\left\{1,\dots,d-1\right\}$.
\end{example}

Before we verify this claim, we recall another result of Bochi and Gourmelon.

\begin{theorem}[Bochi and Gourmelon \cite{BG}]\label{tnsconddomsplit}
	A finite set $\Alpha=\left\{A_1, \dots, A_N\right\}$ of contractive non-singular $d\times d$ matrices has the dominated splitting of index $i$ if and only if there exists a non-empty proper subset $C\subsetneq G(p,d)$ that is strictly invariant under $\mathcal{A}$, i.e.\ $A_{i}\overline{C}\subset C^o$ for all $i \in \{1,\ldots,N\}$, and there is a $(d-p)$-plane that is transverse to all elements of $C$.
	\end{theorem}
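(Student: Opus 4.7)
The plan is to verify, for each index $i\in\{1,\dots,d-1\}$ separately, the hypothesis of Theorem~\ref{tnsconddomsplit}: produce a non-empty proper subset $C_i\subsetneq G(i,d)$ that is strictly $\mathcal{A}$-invariant in the sense $A_j\overline{C_i}\subset C_i^o$ for every $j$, together with a $(d-i)$-plane transverse to every element of $C_i$. The natural candidate comes from the Plücker embedding of $G(i,d)$ into $\mathbb{P}(\wedge^i\R^d)$: I take $C_i$ to be the image of the strictly positive orthant under this embedding, and the transverse plane to be a coordinate subspace. STP then produces strict invariance via Cauchy--Binet applied to the $i$th exterior power.

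In detail, for $V\in G(i,d)$ with basis columns $v_1,\dots,v_i$, its Plücker coordinates $(p_{\iiv}(V))_{\iiv\in I_i^d}$ are the $i\times i$ minors of the $d\times i$ matrix $[v_1\;\cdots\;v_i]$; changing basis multiplies the tuple by a common nonzero scalar. Define
$$
C_i=\bigl\{V\in G(i,d): \text{for some choice of sign, } p_{\iiv}(V)>0 \text{ for every } \iiv\in I_i^d\bigr\}.
$$
This is a well-defined open subset of $G(i,d)$. It is non-empty because, letting $V_0$ be spanned by the first $i$ columns of any fixed $A\in\Alpha$, the Cauchy--Binet/exterior power identity gives $p_{\iiv}(V_0)=A[\iiv,(1,\dots,i)]>0$ since $A$ is STP; it is proper because $\mathrm{span}(e_1,\dots,e_i)\notin C_i$ (only the $(1,\dots,i)$-coordinate is nonzero). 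The closure $\overline{C_i}$ is the set of $V$ whose Plücker coordinates can, after a sign choice, be taken all nonnegative.

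For strict invariance, let $A=A_j$ be any matrix in $\Alpha$ and let $V\in\overline{C_i}$. The exterior power $\wedge^i A$ acts on $\wedge^i\R^d$ with matrix entries $(\wedge^i A)_{\iiv,\jjv}=A[\iiv,\jjv]>0$ (by STP), and its action on Plücker coordinates is
$$
p_{\iiv}(AV)=\sum_{\jjv\in I_i^d} A[\iiv,\jjv]\,p_{\jjv}(V).
$$
Choosing a sign so that every $p_{\jjv}(V)\geq 0$ with at least one strict inequality (which is possible since $V\in\overline{C_i}$ and $V\neq 0$), every $p_{\iiv}(AV)$ is a positive combination with positive weights and hence strictly positive; therefore $AV\in C_i=C_i^o$. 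This yields $A_j\overline{C_i}\subset C_i^o$ for every $j$. For transversality, take $W_i=\mathrm{span}(e_{i+1},\dots,e_d)$, which has dimension $d-i$; for any $V\in C_i$ we have $p_{(1,\dots,i)}(V)\neq 0$, which is precisely the determinant of the first $i$ rows of a basis matrix of $V$, so the projection $V\to\R^i$ onto the first $i$ coordinates is an isomorphism and $V\cap W_i=\{0\}$.

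Applying Theorem~\ref{tnsconddomsplit} for each $i\in\{1,\dots,d-1\}$ yields dominated splitting of index $i$, so $\mathcal{D}(\Alpha)=\{1,\dots,d-1\}$ and $\Alpha$ satisfies the TDS. The only non-routine step is the invariance argument, and there the only subtlety is the sign ambiguity in Plücker coordinates; this is handled by working in the projectivization of $\wedge^i\R^d$ and noting that the positive cone is preserved as a subset of projective space under a matrix with strictly positive entries, which is exactly what Cauchy--Binet together with STP provides.
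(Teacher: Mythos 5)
Your argument is correct and is essentially the paper's own verification of Example~\ref{ex:STP}: you use the same cone $C_i$ of subspaces with (projectively) all-positive Pl\"ucker coordinates, obtain strict invariance $A_j\overline{C_i}\subset C_i$ from positivity of all $i\times i$ minors via the exterior-power/Cauchy--Binet identity, and obtain transversality from the coordinate plane $\langle e_{i+1},\dots,e_d\rangle$ using the nonvanishing of the $(1,\dots,i)$ coordinate, exactly as the paper does with $\widehat{C}_p$ and $A^{[p]}$. As in the paper, the equivalence of Theorem~\ref{tnsconddomsplit} itself is taken as a black box from Bochi--Gourmelon rather than proved, and your identification of $\overline{C_i}$ with the full totally nonnegative locus is an unneeded over-claim: only the inclusion of $\overline{C_i}$ in the set of subspaces with nonnegative (up to sign) Pl\"ucker coordinates, which follows from continuity, is used in the invariance step.
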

	
In the two dimensional case, the set $C$ in Theorem \ref{tnsconddomsplit} is a finite union of closed cones. Since $C$ is mapped strictly into itself by all the matrices, we have a compact set of subspaces $X \subset C$, which is invariant under the action of linear maps, i.e.\ $X = \bigcup_iA_i X$, and has uniformly positive angle with the boundary of $C$. Similarly, the closure of the complement of the multicone $C$ is also mapped into itself by the inverses of the linear maps, and the invariant subset again has a uniformly positive angle with the boundary of $C$. Hence, every ellipse become narrower and thicker, with some uniform multiple, under the action of the linear maps. This is what the definition of the dominated splitting calls for.

		Let us next verify the claim in Example \ref{ex:STP}. The $p$th exterior power of $\R^d$ is denoted by $\wedge^p\R^d$. Then $\left\{e_{i_1}\wedge\dots\wedge e_{i_p}:(i_1,\dots,i_p)\in I_p^d\right\}$ forms a basis of the vector space $\wedge^p\R^d$, where $\left\{e_i\right\}_{i=1}^d$ is the standard orthogonal basis of $\R^d$. Let $A^{[p]}=(A[\iiv,\jjv])_{\iiv,\jjv\in I_p^d}$ for every $d\times d$ matrix $A$. Thus, $A^{[p]}$ defines a linear mapping on $\wedge^p\R^d$ such that
		$$
		A\tv_1\wedge\dots\wedge A\tv_p=A^{[p]}\tv_1\wedge\dots\wedge \tv_p
		$$
		for all $\tv_1,\dots,\tv_p\in\R^d$.
                For each $\iiv=(i_1,\dots,i_p)\in I_p^d$ we define $e_{\iiv}=e_{i_1}\wedge\dots\wedge e_{i_p}$.
		Let $\widehat{C}_p$ be a subset of $\wedge^p\R^d$ such that
		$$
	\widehat{C}_p=\{\tv_1\wedge\dots\wedge\tv_p=(-1)^k\sum_{\iiv\in I_p^d}\lambda_{\iiv}e_{\iiv} : k\in\{0,1\}\text{ and }\lambda_{\iiv}>0\text{ for every }\iiv\in I_p^d\}.
		$$
		Observe that the mapping $P_p \colon \widehat{C}_p\to G(p,d)$ defined by $P(\tv_1\wedge\dots\wedge\tv_p)=\langle\tv_1,\dots,\tv_p\rangle$ is a continuous embedding. Defining $C_p=P(\widehat{C}_p)$ we see that $C_p$ is an open subset of $G(p,d)$. Therefore, by showing that $\bigcup_{i=1}^N A_i\overline{C_p} \subset C_p$ and that there exists a $(d-p)$-plane transverse to all elements of $C_p$, Theorem \ref{tnsconddomsplit} verifies the claim in Example \ref{ex:STP}.
		
			
Let $V\in A_i\overline{C_p}$ for some $i$. Then there exist vectors $\tv_1,\dots,\tv_p$ such that $V=\langle A_i\tv_1,\dots,A_i\tv_p\rangle$ and
$$
\tv_1\wedge\dots\wedge\tv_p=(-1)^k\sum_{\iiv\in I_p^d}\lambda_{\iiv}e_{\iiv},\text{ where }\lambda_{\iiv}\geq0\text{ but there exists }\iiv\in I_{p}^d\text{ such that }\lambda_{\iiv}>0.
$$
Since
$$
A_i\tv_1\wedge\dots\wedge A_i\tv_p=A_i^{[p]}\tv_1\wedge\dots\wedge\tv_p=(-1)^k\sum_{\iiv\in I_p^d}\biggl(\sum_{\jjv\in I_p^d}A_i[\iiv,\jjv]\lambda_{\jjv}\biggr)e_{\iiv}
$$
we see that $\sum_{\jjv\in I_p^d}A_i[\iiv,\jjv]\lambda_{\jjv}>0$. Therefore, $A_i\tv_1\wedge\dots\wedge A_i\tv_p\in\widehat{C}_p$ and $V \in C_p$.

To show that there exists a $(d-p)$-plane that is transverse to $C_p$ it is enough to show that there exist vectors $\av_1,\dots,\av_{d-p}$ that $\tv_1\wedge\dots\wedge\tv_p\wedge\av_1\wedge\dots\wedge\av_{d-p}\neq 0$ for every $\tv_1\wedge\dots\wedge\tv_p\in\widehat{C}_p$. But this follows immediately since, by choosing $\av_1=e_{p+1},\dots,\av_{d-p}=e_d$, we have
$$
\tv_1\wedge\dots\wedge\tv_p\wedge\av_1\wedge\dots\wedge\av_{d-p}=(-1)^k\lambda_{1\dots p}(e_1\wedge\dots\wedge e_d)\neq0.
$$
We have now verified the claim in Example \ref{ex:STP}.

\section{Conditional measures}\label{sec:condmeasure}

Let $(\Omega,\mathcal{B},\lambda)$ be a probability space. If $\zeta$ is a measurable partition of $\Omega$, then by the result of Rokhlin \cite{R}, there exists a canonical system of conditional measures. That is, for $\lambda$-almost every $x\in\Omega$ there exists a measure $\lambda^{\zeta}_{x}$ supported on $\zeta(x)$, where $\zeta(x)$ is the partition element which contains $x$, such that for every measurable set $A\subseteq\Omega$ the function $x\mapsto\lambda_{x}^{\zeta}(A)$ is $\mathcal{B}_{\zeta}$-measurable, where $\mathcal{B}_{\zeta}$ is the sub-$\sigma$-algebra of $\mathcal{B}$ whose elements are union of the elements of $\zeta$, and
\begin{equation*}
\lambda(A)=\int\lambda_{x}^{\zeta}(A) \dd\lambda(x).
\end{equation*}
The conditional measures are uniquely defined up to a set of zero measure.

For two measurable partitions $\zeta_1$ and $\zeta_2$ we define the common refinement $\zeta_1\vee\zeta_2$ such that for every $x$, $(\zeta_1\vee\zeta_2)(x)=\zeta_1(x)\cap\zeta_2(x)$. Moreover, let us define the image of the partition $\zeta$ under of measurable function $g\colon\Omega\mapsto\Omega'$ in the natural way, i.e. by setting $(g\zeta)(x)=g^{-1}(\zeta(g(x)))$ for all $x$. We say that $\zeta_1$ is a refinement of $\zeta_2$ if for every $x$, $\zeta_1(x)\subseteq\zeta_2(x)$, and we denote it by $\zeta_1>\zeta_2$.

\begin{lemma}\label{lem:condtech}
	Let $(\Omega,\mathcal{B},\lambda)$ and $(\Omega',\mathcal{B}',\lambda')$ be probability spaces and $\zeta$ be a measurable partition on $\Omega$. Let $g:\Omega\mapsto\Omega'$ be measurable, bijective mapping such that $g^{-1}\zeta$ is measurable. Then
	$$
	\left(g_*\lambda\right)^{g^{-1}\zeta}_y=g_*(\lambda_{g^{-1}(y)}^{\zeta})
	$$
	for $g_*\lambda$-almost every $y$.
\end{lemma}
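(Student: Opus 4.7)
The plan is to verify that the family $\{g_*\lambda^{\zeta}_{g^{-1}(y)}\}_{y\in\Omega'}$ satisfies the three defining properties of Rokhlin's canonical system of conditional measures for the disintegration of $g_*\lambda$ with respect to the partition $g^{-1}\zeta$: correct support on the partition elements, $\mathcal{B}'_{g^{-1}\zeta}$-measurability of $y\mapsto g_*\lambda^{\zeta}_{g^{-1}(y)}(A')$ for every $A'\in\mathcal{B}'$, and the integration identity $g_*\lambda(A')=\int g_*\lambda^{\zeta}_{g^{-1}(y)}(A')\dd g_*\lambda(y)$. Once these three properties are in place, the uniqueness (up to a $g_*\lambda$-null set) of the disintegration delivers the stated identity.

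First, I would check the support condition: since $\lambda^{\zeta}_x$ is supported on $\zeta(x)$, taking $x=g^{-1}(y)$ and pushing forward by $g$ shows that $g_*\lambda^{\zeta}_{g^{-1}(y)}$ is supported on $g(\zeta(g^{-1}(y)))$, which by the definition of the image partition is exactly the element of $g^{-1}\zeta$ containing $y$. The integration identity is then essentially immediate from Rokhlin's disintegration of $\lambda$ with respect to $\zeta$ together with the change of variables $y=g(x)$:
$$
\int g_*\lambda^{\zeta}_{g^{-1}(y)}(A')\dd g_*\lambda(y)=\int\lambda^{\zeta}_{x}(g^{-1}(A'))\dd\lambda(x)=\lambda(g^{-1}(A'))=g_*\lambda(A').
$$

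The only mildly delicate point, and what I expect to be the main obstacle, is the measurability assertion: the function
$$
y\mapsto g_*\lambda^{\zeta}_{g^{-1}(y)}(A')=\lambda^{\zeta}_{g^{-1}(y)}(g^{-1}(A'))
$$
must be $\mathcal{B}'_{g^{-1}\zeta}$-measurable. Rokhlin's theorem gives the $\mathcal{B}_{\zeta}$-measurability of $x\mapsto\lambda^{\zeta}_x(g^{-1}(A'))$, so the task is to transport this measurability along $g^{-1}$. Here I would exploit bijectivity of $g$ together with the standing hypothesis that $g^{-1}\zeta$ is a measurable partition to argue that $g$ carries $\zeta$-saturated measurable sets to $g^{-1}\zeta$-saturated measurable sets and vice versa, so that $g$ induces a bijection between the sub-$\sigma$-algebras $\mathcal{B}_{\zeta}$ and $\mathcal{B}'_{g^{-1}\zeta}$; composing with $y\mapsto g^{-1}(y)$ then yields the required measurability.

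With these three properties verified, uniqueness of Rokhlin conditional measures forces $(g_*\lambda)^{g^{-1}\zeta}_y=g_*\lambda^{\zeta}_{g^{-1}(y)}$ for $g_*\lambda$-almost every $y$. Everything beyond the measurability bookkeeping is purely a formal manipulation of pushforward and disintegration, so no essential new idea is needed.
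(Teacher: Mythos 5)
Your proof is correct and follows essentially the same strategy as the paper's: verify the integration identity and the support condition for the candidate family $\{g_*(\lambda_{g^{-1}(y)}^{\zeta})\}$ and invoke the uniqueness part of Rokhlin's disintegration theorem. The paper's proof is terser, recording only the integration identity and the support check, and silently skips the $\mathcal{B}'_{g^{-1}\zeta}$-measurability of $y\mapsto g_*(\lambda_{g^{-1}(y)}^{\zeta})(A')$ that you correctly single out as the one delicate point; be aware that your sketch of that step tacitly uses measurability of $g^{-1}$, which is not an explicit hypothesis of the lemma as stated but does hold in the paper's applications, where $g$ is an affine bijection between compact sets.
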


\begin{proof}
	By the definition of conditional measures,
	$$ \int\left(g_*\lambda\right)^{g^{-1}\zeta}_y\dd g_*\lambda(y)=g_*\lambda=\int g_*(\lambda_x^{\zeta})\dd\lambda(x)=\int g_*(\lambda_{g^{-1}(y)}^{\zeta})\dd g_*\lambda(y).
	$$
	Since $g_*(\lambda_{g^{-1}(y)}^{\zeta})$ is supported on $g(\zeta(g^{-1}(y)))=(g^{-1}\zeta)(y)$, the statement follows by the uniqueness of conditional measures.
\end{proof}

Observe that if $\zeta_1$ and $\zeta_2$ are two measurable partitions of $\Omega$ then
\begin{equation}\label{econdcond}
(\lambda_{x}^{\zeta_1})_{x}^{\zeta_2}=\lambda_{x}^{\zeta_1\vee\zeta_2}
\end{equation}
for $\lambda$-almost every $x$. Let us define the conditional entropy of a countable measurable partition $\zeta_1$ with respect to a measurable partition $\zeta_2$ in the usual way
$$
H(\zeta_1|\zeta_2)=-\int\log\lambda_{x}^{\zeta_2}(\zeta_1(x))\dd\lambda(x).
$$
For a subspace $V\subseteq\R^d$ let us define the transversal ball centred at $x\in \R^d$ with radius $r$ in the usual way, i.e.
\begin{equation*}
B^T_V(x,r)=\proj_V^{-1}B_V(\proj_V(x),r),
\end{equation*}
where $B_V(x,r)$ denotes the Euclidean ball centred at $x$ with radius $r$ on $V$. By \cite[Theorem~2.2]{Si}, for the measurable partition $\zeta(x)=\proj_V^{-1}(\proj_V(x))$,
\begin{equation}\label{eq:Simmons}
	\lambda^{\zeta}_x=\lim_{r\to0+}\frac{\lambda|_{B^T_V(x,r)}}{\lambda(B^T_V(x,r))}
\end{equation}
for $\lambda$-almost every $x$.

\begin{lemma}\label{llbmuubtmu}
	Let $\lambda$ be a compactly supported Radon measure on $\R^d$ and $V$ a subspace of $\R^d$. Let $\zeta(x)=\proj_V^{-1}(\proj_V(x))$. If $\ldimloc(\lambda^{\zeta}_{x},x)\geq\alpha$ for $\lambda$-almost all $x$, then
	$$
	\ldimloc(\lambda,x)\geq\alpha+\ldimloc(\lambda^T_V,x) \quad \text{and} \quad \udimloc(\lambda,x)\geq\alpha+\udimloc(\lambda^T_V,x)
	$$
	for $\lambda$-almost every $x$, where $\lambda^T_V=(\proj_V)_*\lambda$.
\end{lemma}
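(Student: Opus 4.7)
The plan is to combine an Egorov-type uniform bound on the conditional dimension with the disintegration $\lambda=\int\tilde\lambda_v\,\dd\lambda^T_V(v)$, where $\tilde\lambda_v$ denotes the common value of $\lambda^\zeta_y$ on the fibre $\proj_V^{-1}(v)$, together with a density-point argument for the set produced by Egorov.

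Fix $\epsilon,\delta>0$ and apply Egorov's theorem to the hypothesis $\ldimloc(\lambda^\zeta_y,y)\ge\alpha$ to obtain $E\subseteq\R^d$ with $\lambda(E)>1-\delta$ and $r_0>0$ such that
\[
\lambda^\zeta_y(B(y,r))\le r^{\alpha-\epsilon}\quad\text{for all }y\in E\text{ and }0<r<r_0.
\]
By the Lebesgue--Besicovitch differentiation theorem for Radon measures, $\lambda$-almost every $x\in E$ is a density point of $E$, so $\lambda(B(x,r)\cap E^c)\le\tfrac12\lambda(B(x,r))$ for all sufficiently small $r$, and it is enough to prove the inequality at such an $x$.

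Using the disintegration,
\[
\lambda(B(x,r)\cap E)=\int_{B_V(\proj_V(x),r)}\tilde\lambda_v(B(x,r)\cap E)\,\dd\lambda^T_V(v).
\]
Let $G=\{v:\tilde\lambda_v(B(x,r)\cap E)>0\}$ and, for $v\in G$, choose $y_v\in B(x,r)\cap E\cap\proj_V^{-1}(v)$. Any $z\in B(x,r)\cap\proj_V^{-1}(v)$ satisfies $|z-y_v|\le|z-x|+|x-y_v|\le 2r$, hence $B(x,r)\cap\proj_V^{-1}(v)\subseteq B(y_v,2r)$, and the Egorov bound applied to $\tilde\lambda_v=\lambda^\zeta_{y_v}$ gives $\tilde\lambda_v(B(x,r))\le(2r)^{\alpha-\epsilon}$. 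Since the integrand vanishes off $G$,
\[
\lambda(B(x,r)\cap E)\le(2r)^{\alpha-\epsilon}\lambda^T_V(B_V(\proj_V(x),r)),
\]
and the density-point bound then yields $\lambda(B(x,r))\le 2\cdot(2r)^{\alpha-\epsilon}\lambda^T_V(B_V(\proj_V(x),r))$ for all sufficiently small $r$.

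Taking logarithms, dividing by $\log r<0$, and passing to $\liminf_{r\to 0+}$ respectively $\limsup_{r\to 0+}$ gives $\ldimloc(\lambda,x)\ge\alpha-\epsilon+\ldimloc(\lambda^T_V,x)$ and $\udimloc(\lambda,x)\ge\alpha-\epsilon+\udimloc(\lambda^T_V,x)$; intersecting the resulting full-measure sets along a sequence $\epsilon_n,\delta_n\to 0$ removes the $\epsilon$ and finishes the proof. The main conceptual obstacle is that the Egorov bound only controls the $\tilde\lambda_v$-mass of balls centred at points of $E$ lying on the fibre $\proj_V^{-1}(v)$, whereas we must bound $\tilde\lambda_v(B(x,r))$ for a ball centred at $x\notin\proj_V^{-1}(v)$ when $v\ne\proj_V(x)$; the geometric inclusion with a well-chosen $y_v\in E$, together with the density-point argument to absorb the contribution of fibres carrying no such $y_v$, is what enables the transfer.
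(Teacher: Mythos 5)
Your proof is correct. The paper does not actually give an argument for this lemma; it simply refers to Lemma~11.3.1 of Ledrappier--Young \cite{LY2}. What you have written is the standard proof of that fact and has exactly the same structure as Ledrappier--Young's argument: use Egorov's theorem to upgrade the pointwise bound $\ldimloc(\lambda^\zeta_x,x)\ge\alpha$ to an almost-uniform estimate $\lambda^\zeta_y(B(y,r))\le r^{\alpha-\epsilon}$ on a large set $E$; disintegrate $\lambda$ into the fibre measures $\tilde\lambda_v$ over $\lambda^T_V$; pass from $\lambda(B(x,r)\cap E)$ back to $\lambda(B(x,r))$ via a Lebesgue density point of $E$; and, crucially, transfer the conditional bound from a fibre point $y_v\in E\cap B(x,r)$ to the off-centre ball via the inclusion $B(x,r)\cap\proj_V^{-1}(v)\subseteq B(y_v,2r)$.

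Two small technical points are worth making explicit, though neither is a gap. First, when you write $\tilde\lambda_v=\lambda^{\zeta}_{y_v}$ and apply the Egorov bound at $y_v$, you are using that the Rokhlin conditional measures are constant along fibres of $\zeta$. This holds for $\lambda$-a.e.\ $y$, so before choosing $y_v$ you should discard the $\lambda$-null subset of $E$ on which it might fail; the set $E$ retains measure $>1-\delta$ and the chosen $y_v\in B(x,r)\cap E\cap\proj_V^{-1}(v)$ then genuinely satisfies $\lambda^\zeta_{y_v}=\tilde\lambda_v$. Second, the notation $\ldimloc(\lambda^T_V,x)$ in the statement should be read as the local dimension of the measure $\lambda^T_V$ on $V$ at the point $\proj_V(x)$; your argument correctly uses $\lambda^T_V(B_V(\proj_V(x),r))$, which matches this interpretation.
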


The proof of the lemma can be found in \cite[Lemma~11.3.1]{LY2}.

\section{Lifted dynamical system and partitions}\label{sec:Lift}

In this section, we introduce a new dynamical system which helps us to overcome the issues caused by the lack of separation conditions. The partitions and conditional measures we utilize are natural with respect to this dynamical system. In forthcoming sections, we prove that on almost every such partition element the measure is exact-dimensional and has a local product-like structure formed by conformal measures.

Throughout this section we always assume that $\nu$ is a Bernoulli measure with simple Lyapunov spectrum.
Let $\Phi = \{ f_i(\xv) = A_i\xv + \tv_i \}_{i=1}^N$ be an IFS on $\R^d$ such that $\mathcal{A}=\{ A_1,\ldots,A_N \}$ is a finite set of contractive non-singular $d \times d$ matrices. Choose $0<\rho<\min\left\{1/N,\min_i\alpha_d(A_i)\right\}$ and let $\widehat{\Alpha}=\{\widehat{A}_1,\dots,\widehat{A}_N\}$ be the set of contractive non-singular $(d+1)\times(d+1)$ matrices such that
$$
\widehat{A}_i=
\begin{pmatrix}
A_i & \underline{0} \\
\underline{0}^T & \rho
\end{pmatrix}.
$$
Because of the definition of $\rho$, we can choose $\tau_1,\dots,\tau_N\in[0,1]$ to be real numbers such that the IFS $\widehat{\Phi}=\{\widehat{f}_i(\xv)=\widehat{A}_i\xv+\underline{\tau}_i\}_{i=1}^N$ satisfies the SSC, where $\underline{\tau}_i=(\tv_i,\tau_i)$. Let $\widehat{\Lambda}$ be the self-affine set associated to $\widehat{\Phi}$. Denote by $\widehat{\pi}$ the natural projection from $\Sigma$ to $\widehat{\Lambda}$, with respect to the IFS $\widehat{\Phi}$.
For simplicity, let us denote the space of flags $\mathbb{F}^d_{(d-1,\dots,1)}$ by $\mathbb{F}$. Recall that the elements of $\mathbb{F}$ are denoted by $\theta$.

Let $G\colon\widehat{\Lambda}\times\mathbb{F}\to\widehat{\Lambda}\times\mathbb{F}$ be such that
$$
G(\widehat{\pi}(\ii),\theta)=(\widehat{\pi}(\sigma\ii),A^{-1}_{i_0}\theta),
$$
where $\ii=(i_0i_1\cdots)$. To simplify notation, we often write $x=(\widehat{\pi}(\ii),\theta)$ and $\Omega=\widehat{\Lambda}\times\mathbb{F}$. By \eqref{eq:simplespect} and Theorem~\ref{thm:furst}, there exists a measure $\mu_F$ on $\mathbb{F}$ such that $\hat{\pi}_*\nu\times\mu_F$ is $G$-invariant and ergodic. We denote the measure $\widehat{\pi}_*\nu\times\mu_F$ by $\lambda$. Let $\widehat{\mu}=\widehat{\pi}_*\nu$ be the self-affine measure on $\widehat{\Lambda}$.

\begin{lemma}\label{lem:angle&dim}
	If \eqref{eq:simplespect} holds, then for $\nu$-almost every $\ii$ and $\mu_F$-almost every $(V_1,\dots,V_{d-1})$,
	$$
	\dim E^i(\ii)=i,\quad \dim V_j=d-j,\quad \text{and} \quad
	\dim E^i(\ii)\cap V_j=\max\{0,i-j\}
	$$
	for all $i,j\in\{1,\dots,d-1\}$.
	Moreover,
	\begin{equation}\label{eq:Furstenberg}
	\lim_{n\to\infty}\frac{1}{n}\log\|A_{i_n}^{-1}\cdots A_{i_0}^{-1}|E^i(\ii)\cap V_{i-1}\|=\ly{i}.
	\end{equation}
	
\end{lemma}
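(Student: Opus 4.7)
The plan is to reduce everything to the strict separation of Lyapunov exponents guaranteed by the simple-spectrum hypothesis, taking the transversality of the Oseledets subspaces $E^j(\ii)$ and the Furstenberg subspaces $V_j$ as the single analytic input and recovering the rest by elementary linear algebra.

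First I would dispose of the two dimension identities as immediate readings of Theorems~\ref{thm:Oseledets} and~\ref{thm:furst}: under \eqref{eq:simplespect} we have $p=d$ and $d_1=\cdots=d_d=1$, whence $\dim E^i(\ii)=d_1+\cdots+d_i=i$ for $\nu$-a.e.\ $\ii$; moreover the relevant flag space collapses to $\mathbb{F}=\mathbb{F}^d_{(1,\ldots,d-1)}$, and from its definition $\dim V_j=d-j$ for every $\theta=(V_{d-1},\ldots,V_1)\in\mathbb{F}$.

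The substantive step I would carry out next is the transversality claim $E^j(\ii)\cap V_j=\{0\}$ for every $j\in\{1,\ldots,d-1\}$ on a set of full $\lambda=\widehat{\pi}_*\nu\times\mu_F$ measure. Writing $B_n=A_{i_n}^{-1}\cdots A_{i_0}^{-1}$, a nonzero $v\in E^j(\ii)$ satisfies, by Theorem~\ref{thm:Oseledets},
\[
\limsup_{n\to\infty}\frac{1}{n}\log\|B_nv\|\le\lim_{n\to\infty}\frac{1}{n}\log\|B_n|E^j(\ii)\|=\ly{j},
\]
whereas a nonzero $v\in V_j$ satisfies, via Theorem~\ref{thm:furst} together with the bound $\|B_nv\|\ge\mathfrak{m}(B_n|V_j)\|v\|$ valid on $V_j$ in the natural reading consistent with $\alpha_d(A)=\mathfrak{m}(A|\R^d)$,
\[
\liminf_{n\to\infty}\frac{1}{n}\log\|B_nv\|\ge\lim_{n\to\infty}\frac{1}{n}\log\mathfrak{m}(B_n|V_j)=\ly{d_1+\cdots+d_{j+1}}=\ly{j+1}.
\]
Since \eqref{eq:simplespect} forces $\ly{j}<\ly{j+1}$, no nonzero $v$ can lie in both subspaces, and a dimension count upgrades this to $E^j(\ii)\oplus V_j=\R^d$.

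Finally I would extract the intersection formula and the Lyapunov asymptotic from the two nested families $E^1(\ii)\subset\cdots\subset E^{d-1}(\ii)$ and $V_{d-1}\subset\cdots\subset V_1$. If $i\le j$, then $E^i(\ii)\cap V_j\subseteq E^j(\ii)\cap V_j=\{0\}$; if $i>j$, then $E^i(\ii)+V_j\supseteq E^j(\ii)+V_j=\R^d$, and the Grassmann relation gives $\dim(E^i(\ii)\cap V_j)=i+(d-j)-d=i-j$. In particular $E^i(\ii)\cap V_{i-1}$ is one-dimensional (adopting the convention $V_0=\R^d$ for $i=1$), and on that line the two exponent bounds above both collapse onto $\ly{i}$, so the limit exists and equals $\ly{i}$; one-dimensionality then converts this pointwise statement into the operator-norm form \eqref{eq:Furstenberg}. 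The only place I expect friction is the transversality step itself, where one must align the Oseledets and Furstenberg full-measure sets and exploit the strict inequality $\ly{j}<\ly{j+1}$; everything else reduces to the elementary dimension count and exponent squeeze sketched above.
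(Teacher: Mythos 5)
Your proposal is correct and follows essentially the same route as the paper. The paper's proof reads off the dimension identities from the definitions, then bounds $\ly{j+1}\le\lim_n\frac1n\log\mathfrak{m}(A_{i_n}^{-1}\cdots A_{i_0}^{-1}|V_j\cap E^i(\ii))\le\lim_n\frac1n\log\|A_{i_n}^{-1}\cdots A_{i_0}^{-1}|E^i(\ii)\cap V_j\|\le\ly{i}$, deducing $E^i(\ii)\cap V_j=\{0\}$ when $j\geq i$ and specializing to $j=i-1$ to collapse the sandwich for \eqref{eq:Furstenberg}; for $i>j$ it gets the upper bound $\dim(E^i(\ii)\cap V_j)\leq i-j$ from $E^i(\ii)\supset E^j(\ii)\oplus(E^i(\ii)\cap V_j)$, with the matching lower bound left implicit. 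You do the same exponent squeeze, but extract transversality $E^j(\ii)\oplus V_j=\R^d$ as a stand-alone step and then invoke the Grassmann identity $\dim(E^i\cap V_j)=\dim E^i+\dim V_j-\dim(E^i+V_j)$ directly, which gives both bounds at once and is slightly cleaner. Your caveat about the intended reading of $\mathfrak{m}(\cdot|V)$ is the right one: the convention consistent with Theorem~\ref{thm:furst} is the infimum of $\|Av\|/\|v\|$ over $v\in V$, and that is what both your argument and the paper's sandwich rely on.
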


\begin{proof}
	The first assertion follows from the definitions of Oseledets spaces and the Furstenberg measure. On the other hand,
	$$
	\lim_{n\to\infty}\frac{1}{n}\log\|A_{i_n}^{-1}\cdots A_{i_0}^{-1}|E^i(\ii)\|=\ly{i}\quad \text{and}\quad\lim_{n\to\infty}\frac{1}{n}\log\mathfrak{m}(A_{i_n}^{-1}\cdots A_{i_0}^{-1}|V_j)=\ly{j+1}.
	$$
	Hence,
	\begin{equation*}
	\ly{j+1}\leq\lim_{n\to\infty}\frac{1}{n}\log\mathfrak{m}(A_{i_n}^{-1}\cdots A_{i_0}^{-1}|V_j\cap E^i(\ii))\leq\lim_{n\to\infty}\frac{1}{n}\log\|A_{i_n}^{-1}\cdots A_{i_0}^{-1}|E^i(\ii)\cap V_j\|\leq\ly{i}.
	\end{equation*}
	Thus, if $j+1> i$ then $E^i(\ii)\cap V_j=\{0\}$ almost surely. Moreover, $E^j(\ii)\oplus V_j=\R^d$. If $j+1\leq i$ then, by $E^j(\ii)\subset E^i(\ii)$ and $E^j(\ii)\oplus V_j=\R^d$, we have $E^i(\ii)\supset E^j(\ii)\oplus( E^i(\ii)\cap V_j)$, which implies that $\dim E^i(\ii)\cap V_j\leq i-j$.
\end{proof}

Let us define families of subspaces in $\R^{d+1}$ such that
$$
F_{d}(\theta)=\langle(0,\ldots,0,1)\rangle, \quad F_j(\theta)=F_d\times V_j, \quad \text{and} \quad F_0(\theta)=\R^{d+1},
$$
where $0$ is repeated $d$ times and $\theta=(V_{d-1},\dots,V_1)$. Note that $F_d$ and $F_0$ are independent of $\theta$.
For each $i\in\{0,\dots,d\}$, let $\xi^i$ be the partition of $\widehat{\Lambda}\times\mathbb{F}$ for which
\begin{equation*}
	\xi^i(\widehat{\pi}(\ii),\theta)=\left\{(\widehat{\pi}(\jj),\tau)\in\Omega:\tau=\theta\text{ and }\widehat{\pi}(\jj)-\widehat{\pi}(\ii)\in F_i(\theta)\right\}.
\end{equation*}
Thus, $\xi^0<\xi^1<\cdots<\xi^d$. Moreover, let $\mathcal{P}$ be the partition with respect to the cylinder sets, that is,
$$
\mathcal{P}(\widehat{\pi}(\ii),\theta)=\left\{(\widehat{\pi}(\jj),\tau)\in\Omega:i_0=j_0\right\}.
$$
Let $\mathcal{P}_0^{n-1}=\mathcal{P}\vee\cdots\vee G^{n-1}\mathcal{P}$ be the common refinement of $\mathcal{P}$ at the level $n$, that is,
$$
\mathcal{P}_0^{n-1}(\widehat{\pi}(\ii),\theta)=\left\{(\widehat{\pi}(\jj),\tau)\in\Omega:i_0=j_0,\dots,i_{n-1}=j_{n-1}\right\}.
$$

Observe that, by the uniqueness of the conditional measure, we have
\begin{equation}\label{eq:lambdatomu}
\lambda_{(\widehat{\pi}(\ii),\theta)}^{\xi^i}=\widehat{\mu}_{\widehat{\pi}(\ii)}^{\eta_{\theta}^i}\times\delta_{\theta} \quad \text{and} \quad \lambda_{(\widehat{\pi}(\ii),\theta)}^{\mathcal{P}}=\frac{\left.\widehat{\mu}\right|_{\widehat{f}_{i_0}(\widehat{\Lambda})}}{\widehat{\mu}(\widehat{f}_{i_0}(\widehat{\Lambda}))}\times\mu_F
\end{equation}
for $\lambda$-almost every $(\widehat{\pi}(\ii),\theta)$,
where $\delta_{\theta}$ denotes the Dirac-measure centered at $\theta$ and $\eta_{\theta}^i$ is the partition of $\widehat{\Lambda}$ such that
$$
\eta_{\theta}^i(\widehat{\pi}(\ii))=\left\{\widehat{\pi}(\jj):\widehat{\pi}(\ii)-\widehat{\pi}(\jj)\in F_i(\theta)\right\}.
$$
The transversal ball centred at $(\widehat{\pi}(\ii),\theta)$ with radius $\delta$ is denoted by $$B^T_i((\widehat{\pi}(\ii),\theta),\delta)=B^T_{F_i(\theta)^{\bot}}(\widehat{\pi}(\ii),\delta).$$
Then, by \eqref{eq:Simmons},
\begin{equation}\label{econddef2}
	\widehat{\mu}^{\eta_{\theta}^1}_{\widehat{\pi}(\ii)}=\lim_{\delta\to0+}\frac{\left.\widehat{\mu}\right|_{B_1^T((\widehat{\pi}(\ii),\theta),\delta)}}{\widehat{\mu}(B_1^T((\widehat{\pi}(\ii),\theta),\delta))}\quad \text{and} \quad \widehat{\mu}_{\widehat{\pi}(\ii)}^{\eta^i_{\theta}}=\lim_{\delta\to0+}\frac{\left.\widehat{\mu}^{\eta^{i-1}_{\theta}}_{\widehat{\pi}(\ii)}\right|_{B^T_{i}((\widehat{\pi}(\ii),\theta),\delta)}}{\widehat{\mu}^{\eta^{i-1}_{\theta}}_{\widehat{\pi}(\ii)}(B^T_i((\widehat{\pi}(\ii),\theta),\delta))}
\end{equation}
for $i\geq2$ and $\nu$-almost all $\ii\in\Sigma$.

\begin{lemma}\label{lem:invariance}
	We have $(G^k)_*\bigl(\lambda_{x}^{\xi^i\vee\mathcal{P}_0^{k-1}}\bigr)=\lambda_{G^k(x)}^{\xi^i}$ for all $k\geq1$ and $i \in \{ 0,\dots,d \}$, and for $\lambda$-almost every $x$.
\end{lemma}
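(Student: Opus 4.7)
The plan is to check the identity by locally inverting $G^k$ on each atom of $\mathcal{P}_0^{k-1}$, invoking Lemma~\ref{lem:condtech}, and then using the product structure $\lambda=\widehat{\mu}\times\mu_F$ to observe that conditioning along the flag-fixing partition $\xi^i$ makes the $\mu_F$-factor collapse to a Dirac mass, so that $\mu_F$ — which is not invariant under any individual $A_{i_0}^{-1}$ — drops out of the comparison.

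I would begin by verifying that for $\lambda$-almost every $x=(\widehat{\pi}(\ii),\theta)$, the map $G^k$ restricted to the atom $(\xi^i\vee\mathcal{P}_0^{k-1})(x)$ is a bijection onto $\xi^i(G^k x)$. On the cylinder $\mathcal{P}_0^{k-1}(x)$ the first $k$ symbols $i_0,\dots,i_{k-1}$ are fixed, and since $\widehat{\Phi}$ is constructed to satisfy the SSC the coding $\widehat{\pi}$ is bijective; hence $G^k$ acts as $\widehat{f}_{i_0\cdots i_{k-1}}^{-1}$ on the $\widehat{\Lambda}$-coordinate and as $A_{i_{k-1}}^{-1}\cdots A_{i_0}^{-1}$ on the flag coordinate. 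Using $\widehat{A}_{i_0}F_d=F_d$ and $\widehat{A}_{i_0}|_{\R^d\times\{0\}}=A_{i_0}$ one obtains the key identity
\[
\widehat{A}_{i_0\cdots i_{k-1}}\,F_i\bigl(A_{i_{k-1}}^{-1}\cdots A_{i_0}^{-1}\theta\bigr)=F_i(\theta),
\]
which together with $\widehat{\pi}(\jj)-\widehat{\pi}(\ii)=\widehat{A}_{i_0\cdots i_{k-1}}(\widehat{\pi}(\sigma^k\jj)-\widehat{\pi}(\sigma^k\ii))$ for $\ii,\jj$ sharing their first $k$ symbols identifies the image as $\xi^i(G^k x)$.

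Next, using \eqref{econdcond} I would write $\lambda_x^{\xi^i\vee\mathcal{P}_0^{k-1}}=(\lambda_x^{\mathcal{P}_0^{k-1}})_x^{\xi^i}$ and apply Lemma~\ref{lem:condtech} to the local bijection $g_x^k:=G^k|_{\mathcal{P}_0^{k-1}(x)}\colon\mathcal{P}_0^{k-1}(x)\to\Omega$, obtaining
\[
(G^k)_*\lambda_x^{\xi^i\vee\mathcal{P}_0^{k-1}}=\bigl((G^k)_*\lambda_x^{\mathcal{P}_0^{k-1}}\bigr)_{G^k x}^{\xi^i}.
\]
Since $\lambda=\widehat{\mu}\times\mu_F$ is a product and $\mathcal{P}_0^{k-1}(x)=\widehat{f}_{i_0\cdots i_{k-1}}(\widehat{\Lambda})\times\mathbb{F}$, the same argument that yields \eqref{eq:lambdatomu} gives $\lambda_x^{\mathcal{P}_0^{k-1}}=\frac{\widehat{\mu}|_{\widehat{f}_{i_0\cdots i_{k-1}}(\widehat{\Lambda})}}{\widehat{\mu}(\widehat{f}_{i_0\cdots i_{k-1}}(\widehat{\Lambda}))}\times\mu_F$, and by self-affinity of $\widehat{\mu}$ together with the SSC,
\[
(G^k)_*\lambda_x^{\mathcal{P}_0^{k-1}}=\widehat{\mu}\times(A_{i_{k-1}}^{-1}\cdots A_{i_0}^{-1})_*\mu_F.
\]

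Finally, for any product measure $\widehat{\mu}\times\tilde{\mu}_F$ the partition $\xi^i$, whose atoms have the product form $\eta^i_{\theta'}(y)\times\{\theta'\}$, admits a product disintegration: the conditional at $G^k x=(\widehat{\pi}(\sigma^k\ii),\theta')$ equals $\widehat{\mu}_{\widehat{\pi}(\sigma^k\ii)}^{\eta^i_{\theta'}}\times\delta_{\theta'}$ independently of $\tilde{\mu}_F$, since the flag coordinate sees the partition into singletons and the standard disintegration of $\tilde{\mu}_F$ produces Dirac masses at $\tilde{\mu}_F$-a.e.\ flag. Since \eqref{eq:lambdatomu} applied at $G^k x$ gives $\lambda_{G^k x}^{\xi^i}=\widehat{\mu}_{\widehat{\pi}(\sigma^k\ii)}^{\eta^i_{\theta'}}\times\delta_{\theta'}$, the two sides agree. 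The conceptual point I expect to take the most care with is precisely this disappearance of the pushed-forward Furstenberg factor under the flag-fixing conditioning: without it the identity would fail, since $\mu_F$ is only invariant on average and not under any individual $A_{i_0}^{-1}$.
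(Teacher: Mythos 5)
Your proof is correct. The ingredients are the same as the paper's (Lemma~\ref{lem:condtech}, self-affinity of $\widehat{\mu}$, the product structure $\lambda=\widehat{\mu}\times\mu_F$, and the intertwining identity $\widehat{A}_{\iiv}F_i(A_{\iiv}^{-1}\theta)=F_i(\theta)$, equivalently $\widehat{f}_{\iiv}\eta^i_\theta=\eta^i_{A_{\iiv}^{-1}\theta}$), but the order of operations differs. The paper first forms the full conditional $\lambda_x^{\xi^i\vee\mathcal{P}_0^{k-1}}$ via \eqref{eq:lambdatomu}, so the flag slot already carries a Dirac mass $\delta_\theta$; it then applies Lemma~\ref{lem:condtech} only inside the $\widehat{\Lambda}$-factor (with $g=\widehat{f}_{\iiv}^{-1}$ and partition $\eta_\theta^i$), and pushing forward simply sends $\delta_\theta\mapsto\delta_{A_{\iiv}^{-1}\theta}$. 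Your argument instead pushes forward the coarser conditional $\lambda_x^{\mathcal{P}_0^{k-1}}$ first, arriving at $\widehat{\mu}\times(A_{\iiv}^{-1})_*\mu_F$, and then conditions on $\xi^i$. This requires the extra observation that the $\xi^i$-conditional of a product measure $\widehat{\mu}\times\tilde{\mu}_F$ is $\widehat{\mu}_{\cdot}^{\eta^i_{\theta'}}\times\delta_{\theta'}$ \emph{independently} of the second-factor marginal $\tilde{\mu}_F$ (because the flag-factor partition is into singletons), and you correctly flag this as the crux; the paper's ordering renders this step unnecessary because $\mu_F$ is replaced by a Dirac before anything is pushed forward. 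Your route also makes explicit the geometric fact that $G^k$ restricted to $(\xi^i\vee\mathcal{P}_0^{k-1})(x)$ is a bijection onto $\xi^i(G^kx)$ -- which is needed to verify that the pulled-back partition $(G^k)^{-1}\xi^i$ restricted to the cylinder coincides with $\xi^i\vee\mathcal{P}_0^{k-1}$ -- whereas the paper encodes the same fact via the partition identity $\widehat{f}_{\iiv}\eta^i_\theta=\eta^i_{A_{\iiv}^{-1}\theta}$. Both are sound; the paper's ordering is marginally leaner because it never has to compare conditionals of two different product measures.
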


\begin{proof}
	Observe that, by \eqref{econdcond} and \eqref{eq:lambdatomu},
	\begin{equation}\label{eq:tech1}
	\lambda_x^{\xi^i\vee\mathcal{P}_0^{k-1}}=\biggl(\frac{\left.\widehat{\mu}\right|_{\widehat{f}_{\iiv}(\widehat{\Lambda})}}{\widehat{\mu}(\widehat{f}_{\iiv}(\widehat{\Lambda}))}\biggr)_{\widehat{\pi}(\ii)}^{\eta_{\theta}^i}\times\delta_{\theta}
	\end{equation}
	for $\lambda$-almost every $x=(\widehat{\pi}(\ii),\theta)$,
	where $\ii|_{k-1}=\iiv$. By the definition of the self-affine measure
	$$
	\widehat{\mu}=\frac{(\widehat{f}_{\iiv}^{-1})_*\left.\widehat{\mu}\right|_{\widehat{f}_{\iiv}(\widehat{\Lambda})}}{\widehat{\mu}(\widehat{f}_{\iiv}(\widehat{\Lambda}))}.
	$$
	Observe that $\widehat{f}_{\iiv}\colon \widehat{\Lambda}\to \widehat{f}_{\iiv}(\widehat{\Lambda})$ is an affine bijection and therefore, by Lemma~\ref{lem:condtech} and \eqref{eq:tech1},
	\begin{align*}
	\biggl(\frac{(\widehat{f}_{\iiv}^{-1})_*\left.\widehat{\mu}\right|_{\widehat{f}_{\iiv}(\widehat{\Lambda})}}{\widehat{\mu}(\widehat{f}_{\iiv}(\widehat{\Lambda}))}\biggr)_{\widehat{\pi}(\sigma^k\ii)}^{\widehat{f}_{\iiv}\eta_{\theta}^i}\times\delta_{A_{\iiv}^{-1}\theta}&=(\widehat{f}_{\iiv}^{-1})_*\biggl(\frac{\left.\widehat{\mu}\right|_{\widehat{f}_{\iiv}(\widehat{\Lambda})}}{\widehat{\mu}(\widehat{f}_{\iiv}(\widehat{\Lambda}))}\biggr)_{\widehat{f}_{\iiv}(\widehat{\pi}(\sigma^k\ii))}^{\eta_{\theta}^i}\times(A_{\iiv}^{-1})_*\delta_{\theta}\\&=(G^k)_*\lambda^{\xi^i\vee\mathcal{P}_0^{k-1}}_{(\widehat{\pi}(\ii),\theta)}.
	\end{align*}
	But on the other hand, $(\widehat{f}_{\iiv}\eta_{\theta}^i)(\widehat{\pi}(\sigma^k\ii))=\widehat{f}_{\iiv}^{-1}(\eta_{\theta}^i(\widehat{\pi}(\ii)))=\eta_{A_{\iiv}^{-1}\theta}^i(\widehat{\pi}(\sigma^k\ii))$, and hence, by \eqref{eq:lambdatomu},
	$$
	\biggl(\frac{(\widehat{f}_{\iiv}^{-1})_*\left.\widehat{\mu}\right|_{\widehat{f}_{\iiv}(\widehat{\Lambda})}}{\widehat{\mu}(\widehat{f}_{\iiv}(\widehat{\Lambda}))}\biggr)_{\widehat{\pi}(\sigma^k\ii)}^{\widehat{f}_{\iiv}\eta_{\theta}^i}\times\delta_{A_{\iiv}^{-1}\theta}=\left(\widehat{\mu}\right)^{\eta_{A_{\iiv}^{-1}\theta}^i}_{\widehat{\pi}(\sigma^k\ii)}\times\delta_{A_{\iiv}^{-1}\theta}=\lambda_{(\widehat{\pi}(\sigma^k\ii),A_{\iiv}^{-1}\theta)}^{\xi^i}.
	$$
	The proof is finished.
\end{proof}

\section{Proof of Ledrappier-Young formula with simple Lyapunov spectrum}\label{sec:proofLY2}

Throughout this section we always assume that $\nu$ is a Bernoulli measure with simple Lyapunov spectrum.
We denote the conditional entropy of $\mathcal{P}$ with respect to $\xi^n$  by $H^n=H(\mathcal{P}|\xi^n)$. We call the measure $(\lambda_{x}^{\xi^i})^T_{F_{i+1}(\theta)^{\bot}}$ a \emph{transversal measure} of $\lambda_{x}^{\xi^i}$.

\begin{lemma}\label{ltoLY1}
	There is a constant $c>0$ such that
	\[
	\xi^d(x)\cap B_{c^{-1}\rho^n}(x)\subseteq(\mathcal{P}_{0}^{n-1}\vee\xi^{d})(x)\subseteq B_{c\rho^n}(x)\cap\xi^d(x).
	\]
	for all $n\geq1$ and $x\in\Omega$.
\end{lemma}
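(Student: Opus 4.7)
The proof rests on one structural feature of the lifted system: the line $F_d=\langle(0,\ldots,0,1)\rangle$ is invariant under every $\widehat{A}_i$, and $\widehat{A}_i|_{F_d}$ is scalar multiplication by $\rho$. Hence $\widehat{A}_{i_0\cdots i_{n-1}}$ acts on $F_d$ as multiplication by $\rho^n$. Both inclusions follow from this observation together with the SSC for $\widehat{\Phi}$.

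For the upper inclusion, if $(\widehat{\pi}(\jj),\theta)\in(\mathcal{P}_0^{n-1}\vee\xi^d)(x)$ with $x=(\widehat{\pi}(\ii),\theta)$, then $i_0=j_0,\dots,i_{n-1}=j_{n-1}$, and the semigroup property of the IFS yields
\[
\widehat{\pi}(\jj)-\widehat{\pi}(\ii)=\widehat{A}_{i_0\cdots i_{n-1}}\bigl(\widehat{\pi}(\sigma^n\jj)-\widehat{\pi}(\sigma^n\ii)\bigr).
\]
The left side lies in $F_d$ by the $\xi^d$-condition, and since $\widehat{A}_{i_0\cdots i_{n-1}}$ is invertible and preserves $F_d$, so does the pre-image. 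Because the restriction equals $\rho^n\,\mathrm{Id}$, we obtain
\[
\|\widehat{\pi}(\jj)-\widehat{\pi}(\ii)\|=\rho^n\|\widehat{\pi}(\sigma^n\jj)-\widehat{\pi}(\sigma^n\ii)\|\leq \rho^n\diam\widehat{\Lambda},
\]
so any choice $c\geq\diam\widehat{\Lambda}$ secures this direction.

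For the reverse inclusion I would argue by contradiction. Suppose $(\widehat{\pi}(\jj),\theta)\in\xi^d(x)\cap B_{c^{-1}\rho^n}(x)$ while $\ii$ and $\jj$ disagree somewhere within their first $n$ symbols, and let $k<n$ be the least disagreement index. The same computation, now at level $k$,
\[
\widehat{\pi}(\jj)-\widehat{\pi}(\ii)=\widehat{A}_{i_0\cdots i_{k-1}}\bigl(\widehat{\pi}(\sigma^k\jj)-\widehat{\pi}(\sigma^k\ii)\bigr),
\]
combined with the same $F_d$-invariance argument gives $\|\widehat{\pi}(\jj)-\widehat{\pi}(\ii)\|=\rho^k\|\widehat{\pi}(\sigma^k\jj)-\widehat{\pi}(\sigma^k\ii)\|$. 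Since $\widehat{\pi}(\sigma^k\ii)\in\widehat{f}_{i_k}(\widehat{\Lambda})$, $\widehat{\pi}(\sigma^k\jj)\in\widehat{f}_{j_k}(\widehat{\Lambda})$ with $i_k\neq j_k$, the SSC of $\widehat{\Phi}$ and compactness supply $\delta:=\min_{i\neq j}\mathrm{dist}(\widehat{f}_i(\widehat{\Lambda}),\widehat{f}_j(\widehat{\Lambda}))>0$, so $\|\widehat{\pi}(\jj)-\widehat{\pi}(\ii)\|\geq \rho^k\delta\geq \rho^{n-1}\delta$. This contradicts the assumption as soon as $c\geq\rho/\delta$.

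Setting $c=\max\{\diam\widehat{\Lambda},\,\rho/\delta\}$ then works uniformly in $n\geq 1$ and $x\in\Omega$. The only mildly delicate point is to invoke the SSC gap at the \emph{earliest} disagreement level $k$ rather than at $n$: since $\delta$ is a level-one quantity and $F_d$-restricted matrix products scale distances by exactly $\rho^k$, the separation produced is $\rho^k\delta\geq\rho^{n-1}\delta$, which is precisely what is needed to absorb the extra factor of $\rho$ into the constant $c$.
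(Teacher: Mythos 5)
Your proof is correct and follows essentially the same strategy as the paper: the upper inclusion via the diameter estimate $\diam(\widehat{\Lambda})\rho^n$ (using that $\widehat{A}_{\iiv}$ scales $F_d$ by exactly $\rho^{|\iiv|}$), and the lower inclusion via the uniform SSC gap $\delta$ at the first level of disagreement. The only hair-splitting remark is that since the $B_r$ are closed balls and the SSC gap need not be strict, one should take $c$ strictly larger than $\rho/\delta$ (e.g.\ $c=\max\{\diam\widehat{\Lambda},2\rho/\delta\}$, which is in effect what the paper does by setting $\kappa=\delta/2$).
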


\begin{proof}
	Let us fix a $n\geq1$ and $x=(\widehat{\pi}(\ii),\theta)\in\Omega$. By the definition of $\xi^d$ and $F^d(\theta)$, we have
	$$\mathrm{diam}((\mathcal{P}_{0}^{n-1}\vee\xi^{d})(x))\leq \mathrm{diam}(\widehat{\Lambda})\rho^n.$$
	On the other hand, since the IFS $\widehat{\Phi}$ satisfies the SSC,  $\kappa=\min_{i\neq j}\mathrm{dist}(\widehat{f}_i(\widehat{\Lambda}),\widehat{f}_j(\widehat{\Lambda}))/2>0$. Thus for every $G^{n}(x)\in\Omega$, if $\widehat{\pi}(\sigma^n\ii)\in \widehat{f}_i(\widehat{\Lambda})$, then $\mathrm{dist}(\widehat{\pi}(\sigma^n\ii),\widehat{f}_j(\widehat{\Lambda}))>\kappa$ for every $j\neq i$. So
	\begin{align*}
	\xi^d(x)\cap G^{-n}(B_{\kappa}(G^{n}(x)))\cap\left(\mathcal{P}_{0}^{n-1}\right)(x)&\supseteq G^{-n}\left(B_{\kappa}(G^{n}(x))\cap \xi^d(G^{n}(x))\right)\cap\left(\mathcal{P}_{0}^{n-1}\right)(x)\\&\supseteq B_{\kappa\rho^n}(x)\cap\xi^d(x).	
	\end{align*}
	The statement follows by choosing $c=\max\{\mathrm{diam}(\widehat{\Lambda}),\kappa^{-1}\}$.
\end{proof}

\begin{prop}\label{prop:1}
	The measure $\lambda_{x}^{\xi^d}$ is exact dimensional and
	$$
	\dim\lambda_{x}^{\xi^d}=\frac{H^d}{-\log\rho}
	$$
	for $\lambda$-almost every $x$.
\end{prop}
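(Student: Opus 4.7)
The idea is a Shannon--McMillan--Breiman style reduction combined with the geometric sandwich of Lemma~\ref{ltoLY1}. Since $\lambda_x^{\xi^d}$ is supported on $\xi^d(x)$, Lemma~\ref{ltoLY1} yields
\begin{equation*}
\lambda_x^{\xi^d}(B_{c^{-1}\rho^n}(x)) \leq \lambda_x^{\xi^d}\bigl((\mathcal{P}_0^{n-1}\vee\xi^d)(x)\bigr) \leq \lambda_x^{\xi^d}(B_{c\rho^n}(x)),
\end{equation*}
so it will suffice to show that $-\frac{1}{n}\log\lambda_x^{\xi^d}((\mathcal{P}_0^{n-1}\vee\xi^d)(x)) \to H^d$ for $\lambda$-almost every $x$. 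Once this is established, the full local dimension at arbitrary radii $r\to 0+$ is obtained by choosing $n$ with $c\rho^{n+1}\leq r < c\rho^n$, using monotonicity of $\lambda_x^{\xi^d}(B_r(x))$ in $r$, and the fact that $-\log(c\rho^n)/(-n\log\rho)\to 1$.

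For the entropy limit, my plan is to telescope along the tower $\mathcal{P}_0^{-1}<\mathcal{P}_0^0<\cdots<\mathcal{P}_0^{n-1}$ (writing $\mathcal{P}_0^{-1}$ for the trivial partition):
\begin{equation*}
\lambda_x^{\xi^d}\bigl((\mathcal{P}_0^{n-1}\vee\xi^d)(x)\bigr) = \prod_{k=0}^{n-1} \lambda_x^{\xi^d\vee\mathcal{P}_0^{k-1}}\bigl((G^k\mathcal{P})(x)\bigr),
\end{equation*}
and then apply Lemma~\ref{lem:invariance}: because $(G^k)_*\lambda_x^{\xi^d\vee\mathcal{P}_0^{k-1}} = \lambda_{G^k(x)}^{\xi^d}$ and $G^k$ sends $(G^k\mathcal{P})(x)$ onto $\mathcal{P}(G^k(x))$ by the very definition of the partition image, each factor equals $\lambda_{G^k(x)}^{\xi^d}(\mathcal{P}(G^k(x)))$. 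Setting $\phi(y) = -\log\lambda_y^{\xi^d}(\mathcal{P}(y))$, the telescoping reads
\begin{equation*}
-\log\lambda_x^{\xi^d}\bigl((\mathcal{P}_0^{n-1}\vee\xi^d)(x)\bigr) = \sum_{k=0}^{n-1}\phi(G^k(x)).
\end{equation*}
Since $\mathcal{P}$ has only $N$ atoms, $0\leq\phi\leq\log N$, so $\phi\in L^1(\lambda)$ with $\int\phi\,d\lambda = H(\mathcal{P}|\xi^d)=H^d$. The measure $\lambda=\widehat{\mu}\times\mu_F$ is $G$-invariant and ergodic by the construction that follows Theorem~\ref{thm:furst} (via the symbolic conjugacy that the SSC provides for $\widehat{\pi}$), so Birkhoff's ergodic theorem yields the desired limit $\frac{1}{n}\sum_{k=0}^{n-1}\phi(G^k(x))\to H^d$ almost surely.

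The main technical point is verifying the telescoping step together with the invariance identity of Lemma~\ref{lem:invariance}: one must check that $G^k$ restricted to $(\xi^d\vee\mathcal{P}_0^{k-1})(x)$ is essentially a bijection onto $\xi^d(G^k(x))$, which uses the block-diagonal form of $\widehat{A}_{\iiv}$ and the SSC enjoyed by $\widehat{\Phi}$, together with the fact that $F_d(\theta)=\langle(0,\ldots,0,1)\rangle$ is both $\theta$-independent and $\widehat{A}_i^{-1}$-invariant (scaled by $\rho^{-1}$). All other steps—integrability of $\phi$, application of Birkhoff, and passing from the scale $\rho^n$ to arbitrary $r$—are routine.
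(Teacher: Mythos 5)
Your proof follows the same route as the paper's: reduce to the entropy limit via Lemma~\ref{ltoLY1}, telescope $\lambda_x^{\xi^d}\bigl((\mathcal{P}_0^{n-1}\vee\xi^d)(x)\bigr)$ into a product of conditional probabilities via \eqref{econdcond}, transport each factor to $\lambda_{G^k(x)}^{\xi^d}(\mathcal{P}(G^k(x)))$ using Lemma~\ref{lem:invariance}, and apply Birkhoff. The identification of the factors is correct: the relevant identity is the preimage relation $(G^k\mathcal{P})(x)=(G^k)^{-1}\bigl(\mathcal{P}(G^k(x))\bigr)$, so the pushforward $(G^k)_*\lambda_x^{\xi^d\vee\mathcal{P}_0^{k-1}}$ evaluated at $\mathcal{P}(G^k(x))$ gives exactly what you want; there is no need for bijectivity of $G^k$ on the fiber beyond what Lemma~\ref{lem:invariance} already encodes.

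One point in your write-up is incorrect, though the conclusion survives. The claimed pointwise bound $0\leq\phi\leq\log N$ with $\phi(y)=-\log\lambda_y^{\xi^d}(\mathcal{P}(y))$ is false: $\lambda_y^{\xi^d}$ is a conditional measure supported on a thin fiber, and there is no reason that $\lambda_y^{\xi^d}(\mathcal{P}(y))\geq 1/N$ at a given $y$, so $\phi$ is not pointwise bounded. What is true, and is what you actually need, is integrability: for any finite partition $\mathcal{P}$ and any measurable partition $\zeta$, the conditional entropy satisfies $H(\mathcal{P}\mid\zeta)\leq H(\mathcal{P})\leq\log N$, because on each fiber the sum $\sum_{P\in\mathcal{P}}-\lambda_y^{\zeta}(P)\log\lambda_y^{\zeta}(P)$ is the entropy of a probability vector on $N$ atoms and hence at most $\log N$. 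Thus $\phi\in L^1(\lambda)$ and Birkhoff applies, giving the finite limit $H^d$. Replace the pointwise bound with this standard inequality and the argument is complete and matches the paper's.
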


\begin{proof}
	By Lemma~\ref{ltoLY1}, to prove the statement of the proposition it is enough to show that
	\begin{equation*}
	\lim_{n\rightarrow\infty}\frac{1}{n}\log\lambda_{x}^{\xi^{d}}((\mathcal{P}_{0}^{n-1}\vee\xi^{d})(x))=H^d
	\end{equation*}
	for $\lambda$-almost every $x$.
	Observe that we have
	\begin{align*}	\log\lambda_{x}^{\xi^{d}}((\mathcal{P}_{0}^{n-1}\vee\xi^{d})(x))&=\log\lambda_{x}^{\xi^{d}}\bigl(\mathcal{P}(x)\cap\cdots\cap G^{-(n-1)}(\mathcal{P}(G^{n-1}(x)))\bigr)
	\\&=\log\lambda_{x}^{\xi^{d}}(\mathcal{P}(x))\prod_{k=1}^{n-1}\frac{\lambda_{x}^{\xi^{d}}\left(\mathcal{P}(x)\cap\cdots\cap G^{-k}(\mathcal{P}(G^{k}(x)))\right)}{\lambda_{x}^{\xi^{d}}\left(\mathcal{P}(x)\cap\cdots\cap G^{-(k-1)}(\mathcal{P}(G^{k-1}(x)))\right)}.
	\end{align*}
	By using \eqref{econdcond} and Lemma~\ref{lem:invariance}, we get
	\begin{equation*}
	\frac{\lambda_{x}^{\xi^{d}}\left(\mathcal{P}(x)\cap\cdots\cap G^{-k}(\mathcal{P}(G^{k}(x)))\right)}{\lambda_{x}^{\xi^{d}}\left(\mathcal{P}(x)\cap\cdots\cap G^{-(k-1)}(\mathcal{P}(G^{k-1}(x)))\right)}=\lambda_{x}^{\xi^{d}\vee\mathcal{P}_0^{k-1}}\bigl(G^{-k}(\mathcal{P}(G^{k}(x)))\bigr)=
	\lambda_{G^{k}(x)}^{\xi^{d}}\bigl(\mathcal{P}(G^{k}(x)))\bigr).
	\end{equation*}
	Hence, by Birkhoff's Ergodic Theorem,
	\begin{equation*}
	\frac{1}{n}\log\lambda_{x}^{\xi^{d}}((\mathcal{P}_{0}^{n-1}\vee\xi^{d})(x))=\frac{1}{n}\sum_{k=0}^{n-1}\log\cm{G^{k}(x)}{\xi^{d}}\bigl(\mathcal{P}(G^{k}(x)))\bigr)\to\int\log\lambda_{y}^{\xi^{d}}(\mathcal{P}(y))\dd\lambda(y),
	\end{equation*}
	for $\lambda$-almost every $x$.
\end{proof}

\begin{prop}\label{pLY2}
	For $\lambda$-almost every $x=(\widehat{\pi}(\ii),\theta)$, the measure $(\lambda_{x}^{\xi^i})^T_{F_{i+1}(\theta)^{\bot}}$ is exact dimensional and
	$$
	\dim(\lambda_{x}^{\xi^i})^T_{F_{i+1}(\theta)^{\bot}}=\frac{H^i-H^{i+1}}{\ly{i+1}}.
	$$
\end{prop}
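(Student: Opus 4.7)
The plan is to follow the Birkhoff-based scheme of Proposition~\ref{prop:1}, now at the transverse scale. Write $\mu_x=(\lambda_x^{\xi^i})^T_{F_{i+1}(\theta)^{\bot}}$, $\pi_t=\proj_{F_{i+1}(\theta)^{\bot}}$, and $T_\delta(x)=\xi^i(x)\cap B^T_{F_{i+1}(\theta)^{\bot}}(x,\delta)$, so that $\mu_x(B(\pi_t(x),\delta))=\lambda_x^{\xi^i}(T_\delta(x))$. For $x=(\widehat{\pi}(\ii),\theta)$ with $\iiv=\ii|_{n-1}$, set
\[
\delta_n(x):=\mathfrak{m}\bigl(\widehat{A}_{\iiv}^{-1}\,\big|\,F_i(\theta)\cap F_{i+1}(\theta)^{\bot}\bigr)^{-1};
\]
by \eqref{eq:Furstenberg} and Lemma~\ref{lem:angle&dim}, this scale satisfies $-\tfrac{1}{n}\log\delta_n\to\ly{i+1}$ for $\lambda$-almost every $x$. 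The target is then
\[
\lim_{n\to\infty}\frac{1}{n}\log\lambda_x^{\xi^i}(T_{\delta_n}(x))=-(H^i-H^{i+1}),
\]
which, after division by $\log\delta_n$, yields exact dimensionality and the claimed formula.

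The underlying geometric observation, analogous to Lemma~\ref{ltoLY1}, is that the $n$-cylinder image $\widehat{f}_{\iiv}(\widehat{\Lambda})$ projects under $\pi_t$ to an interval of length comparable to $\delta_n$. Hence every cylinder $\mathcal{P}_0^{n-1}(\jjv)$ whose $\xi^{i+1}$-trace passes near $\xi^{i+1}(x)$ has its whole intersection with $\xi^i(x)$ inside $T_{c\delta_n}(x)$ for a fixed constant $c>0$, and in particular the one-sided inclusion $(\mathcal{P}_0^{n-1}\vee\xi^i)(x)\subseteq T_{c\delta_n}(x)$ holds. Unlike in Lemma~\ref{ltoLY1}, however, the reverse inclusion fails because $T_{\delta_n}(x)$ extends freely along the $F_{i+1}(\theta)$-directions.

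The entropy input comes from repeating the telescoping identity of Proposition~\ref{prop:1} via Lemma~\ref{lem:invariance} and Birkhoff, which gives
\[
\frac{1}{n}\log\lambda_x^{\xi^i}((\mathcal{P}_0^{n-1}\vee\xi^i)(x))\to-H^i\quad\text{and}\quad\frac{1}{n}\log\lambda_x^{\xi^{i+1}}((\mathcal{P}_0^{n-1}\vee\xi^{i+1})(x))\to-H^{i+1}.
\]
The lower estimate on $\lambda_x^{\xi^i}(T_{c\delta_n}(x))$ now follows by summing: Shannon--McMillan for $\lambda_x^{\xi^{i+1}}$ yields approximately $e^{nH^{i+1}}$ cylinders $(\mathcal{P}_0^{n-1}\vee\xi^{i+1})(\jjv)$ lying on $\xi^{i+1}(x)$, each of which extends to a cylinder $(\mathcal{P}_0^{n-1}\vee\xi^i)(\jjv)$ of $\lambda_x^{\xi^i}$-mass $\approx e^{-nH^i}$ sitting inside $T_{c\delta_n}(x)$ by the geometric observation. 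Summing produces $\lambda_x^{\xi^i}(T_{c\delta_n}(x))\gtrsim e^{-n(H^i-H^{i+1})}$, which gives $\dim\mu_x\leq (H^i-H^{i+1})/\ly{i+1}$ (the constant $c$ is absorbed on division by $\log\delta_n$).

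The main obstacle is the matching upper estimate. One must show that $T_{\delta_n}(x)$ is not substantially larger in $\lambda_x^{\xi^i}$-measure than the union of the $\approx e^{nH^{i+1}}$ cylinders above, which requires a bounded-multiplicity control on the overlaps of the projected cylinders $\pi_t(\mathcal{P}_0^{n-1}(\jjv))$ inside the interval $\pi_t(T_{\delta_n}(x))$. Here the simple-spectrum hypothesis enters crucially: Lemma~\ref{lem:angle&dim} together with an Egorov reduction yields a set of arbitrarily large $\lambda$-measure on which $F_i(\theta)\cap F_{i+1}(\theta)^{\bot}$ is uniformly transverse to the Oseledets subspaces dictating the asymptotic orientation of nearby cylinders, so that a Besicovitch-style covering of $\pi_t(T_{\delta_n}(x))$ by the projected cylinders produces the required upper bound and completes the proof.
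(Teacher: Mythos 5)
There is a genuine gap, both in the choice of scale and in the argument for the measure upper bound (dimension lower bound).

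\textbf{(1) The scale $\delta_n$ has the wrong decay rate.} Since $F_i(\theta)\cap F_{i+1}(\theta)^{\perp}$ is the one-dimensional orthogonal complement of $V_{i+1}$ inside $V_i$ (times $\{0\}$ in the last coordinate), and for one-dimensional subspaces $\mathfrak{m}$ coincides with the operator norm, your scale is
\[
\delta_n(x)=\|A_{i_{n-1}}^{-1}\cdots A_{i_0}^{-1}|V_i\cap V_{i+1}^{\perp}\|^{-1}.
\]
Writing $V_i=\hat{E}^{i+1}(\ii)\oplus\cdots\oplus\hat{E}^{d}(\ii)$ and $V_{i+1}=\hat{E}^{i+2}(\ii)\oplus\cdots\oplus\hat{E}^d(\ii)$ in the notation of the proof of Theorem~\ref{thm:furst}, the orthogonal complement of $V_{i+1}$ in $V_i$ equals the slow Oseledets direction $\hat{E}^{i+1}(\ii)$ only if $\hat{E}^{i+1}(\ii)\perp V_{i+1}$, which fails $\lambda$-a.s. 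Generically $V_i\cap V_{i+1}^{\perp}$ has a nonzero component along $\hat{E}^{d}(\ii)$, so $\frac{1}{n}\log\|A_{i_{n-1}}^{-1}\cdots A_{i_0}^{-1}|V_i\cap V_{i+1}^{\perp}\|\to\ly{d}$ and not $\ly{i+1}$. Neither \eqref{eq:Furstenberg} (which is about $E^{i+1}(\ii)\cap V_i$) nor Lemma~\ref{lem:angle&dim} (which gives only dimension counts) supports the convergence you assert. The correct direction is $K^{i+1}(x)=E^{i+1}(\ii)\cap V_i$ and the correct scale is $\|A_{i_{n-1}}^{-1}\cdots A_{i_0}^{-1}|K^{i+1}(x)\|^{-1}$; by \eqref{eq:what} the time-zero angle between $K^{i+1}(x)$ and $V_i\cap V_{i+1}^\perp$ is $\lambda$-a.s.\ nondegenerate, which is exactly why the paper can pass from $B^t_{i+1}$ to $B^T_{i+1}$ at the end of the proof — but the scale in the dynamics must be taken along $K^{i+1}$. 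With your $\delta_n$ the claimed inclusion $(\mathcal{P}_0^{n-1}\vee\xi^i)(x)\subseteq T_{c\delta_n}(x)$ also fails: the transversal extent of the $n$-cylinder inside $\xi^i(x)$ is of order $e^{-n\ly{i+1}}$, much larger than $\delta_n\approx e^{-n\ly{d}}$, so the entire count-and-sum lower estimate collapses.

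\textbf{(2) The step you flag as ``the main obstacle'' is not resolved, and it is where the paper's proof actually lives.} Invoking Egorov and a ``Besicovitch-style covering'' is a gesture rather than an argument: the projections $\pi_t(\mathcal{P}_0^{n-1}(\jjv)\cap\xi^i(x))$ are intervals whose lengths depend strongly on $\jjv$ and do not form a family to which a Besicovitch multiplicity bound applies off the shelf, so the Shannon--McMillan count does not upgrade to a two-sided bound on $\cm{x}{\xi^i}(T_{\delta_n}(x))$. The paper uses no covering argument here at all. It establishes the two-sided asymptotic \eqref{elocdimLY2} directly by the telescoping identity \eqref{etech3}, the exact one-step scaling of the modified transversal balls under $G$ from Lemma~\ref{lem:econt2} combined with the conditional-measure invariance of Lemma~\ref{lem:invariance} (yielding \eqref{etech2}), Maker's ergodic theorem (Theorem~\ref{lmaker}) applied to the scale-dependent ratios $w^{i+1}_\delta$, and the subexponential angle control \eqref{eq:tofin3} for the boundary term; the integrability needed for Maker's theorem is supplied by Lemma~\ref{ltoLY2b}. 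None of these ingredients appear in your proposal, which therefore leaves the dimension lower bound unproved even after the scale is corrected.
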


We note that the measure $(\lambda_{x}^{\xi^i})^T_{(F_{i+1}(\theta))^{\bot}}$ is the orthogonal projection of the measure $\lambda_{x}^{\xi^i}$ onto the orthogonal complement of $F_{i+1}(\theta)$ and $\dim(\left(F_{i+1}(\theta)\right)^{\bot}\cap F_{i}(\theta))=1$.
Let us introduce modified transversal balls $B^t_i(x,\delta)$ for $\lambda$-almost every $x$ by setting
\begin{align*}
	B^t_i(x,\delta)&=\left\{y\in\xi^{i-1}(x):\mathrm{dist}(E^{i}(x)\cap\xi^i(x), E^{i}(x)\cap\xi^i(y))<\delta\right\}, \\
	B^t_d(x,\delta)&=B^T_d(x,\delta)\cap\xi^{d-1}(x),
\end{align*}
where $E^i(x)=E^i(\ii)$ is defined in Theorem~\ref{thm:Oseledets}. By the definition,
$$
B^t_i(x,\delta)=B^T_i(x,\delta\cdot\cos\sphericalangle(E^i(x)\cap V_{i-1},V_i^{\bot}\cap V_{i-1}))
$$
for $\lambda$-almost every $x\in\Omega$ and for every $\delta>0$. Let us define functions
\begin{equation*}
	w^{i}(x)=\cm{x}{\xi^{i}}(\mathcal{P}(x))
\end{equation*}
and
\begin{equation}\label{eq:wdelta}
w_{\delta}^{i}(x)=\frac{\cm{x}{\xi^{i-1}}(B^t_{i}(x,\delta)\cap\mathcal{P}(x))}{\cm{x}{\xi^{i-1}}(B^t_{i}(x,\delta))}
\end{equation}
for all $i \in \{1,\dots,d\}$. By \eqref{econddef2}, $w_{\delta}^{i}\rightarrow w^{i}$ as $\delta\rightarrow0+$ for $\lambda$-almost everywhere and, since $w_{\delta}^{i}$ is uniformly bounded, $w_{\delta}^i\rightarrow w^i$ in $L^1(\lambda)$ as $\delta\rightarrow0+$ for all $i$.

\begin{lemma}\label{ltoLY2b}
	The function $\sup_{\delta>0}\{-\log w_{\delta}^{i}\}$ is in $L^1(\lambda)$ for every $i$.
\end{lemma}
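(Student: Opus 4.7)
The plan is to bound the integrand pointwise by a Hardy-Littlewood-type maximal function on each atom of $\xi^{i-1}$, apply the weak $(1,1)$ maximal inequality, and then control the logarithmic blow-up using the $\lambda$-a.e.\ positivity of $w^{i}$ coming from Lebesgue differentiation. For $\lambda$-a.e.\ $x=(\widehat\pi(\ii),\theta)$, let $A=\xi^{i-1}(x)$ and $\nu_A=\lambda_x^{\xi^{i-1}}$. The partition $\mathcal{P}$ restricts $A$ to a finite partition $\{A\cap P_j\}_{j=1}^N$ indexed by the first symbol; if $x\in A\cap P_j$, then
\[
w_\delta^i(x)=1-u_\delta(x),\qquad u_\delta(x):=\frac{\nu_A(B^t_i(x,\delta)\setminus P_j)}{\nu_A(B^t_i(x,\delta))},
\]
so $\sup_{\delta>0}(-\log w_\delta^i(x))=-\log(1-U(x))$ where $U(x)=\sup_{\delta>0}u_\delta(x)$ is the maximal function of $\mathbf{1}_{A\setminus P_j}$ with respect to $\nu_A$ and the transversal balls.

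The next observation is that, inside the atom $A$, the transversal balls $B^t_i(\cdot,\delta)$ collapse to Euclidean balls along the one-dimensional quotient $F_{i-1}(\theta)\cap F_i(\theta)^{\perp}$. In this one-dimensional direction the Besicovitch covering theorem holds with an absolute constant, and it yields both the Lebesgue differentiation theorem for $\nu_A$ (from which \eqref{econddef2} was derived) and the weak $(1,1)$ bound
\[
\nu_A(\{U>\alpha\})\leq \frac{C}{\alpha}\,\nu_A(A\setminus P_j),\qquad \alpha\in(0,1),
\]
with $C$ absolute.

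I would then integrate against $\nu_A$ using layer-cake,
\[
\int_{A\cap P_j}-\log(1-U)\,d\nu_A=\int_0^1\nu_A(\{U>1-s\}\cap A\cap P_j)\,\frac{ds}{s},
\]
and split the $s$-integral at a scale $s_0(x)>0$ below which Lebesgue differentiation forces $u_\delta(x)\leq \tfrac12(1-w^i(x))$ for every sufficiently small $\delta$, while for $\delta$ large enough that $B^t_i(x,\delta)\supset A$ one has $u_\delta(x)=\nu_A(A\setminus P_j)$. Above $s_0$ the weak $(1,1)$ bound contributes a bounded integral, and below $s_0$ the integrand is dominated by $-\log w^i(x) + \mathrm{const}$ on a set of controlled measure.

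Integration over atoms then uses the disintegration $\lambda=\int \nu_A\,d(\pi_*\lambda^{\xi^{i-1}})(A)$ and Fubini, together with the fact that $-\log w^i\in L^1(\lambda)$, which is exactly the quantity computed in Proposition~\ref{prop:1}. The main obstacle is the uniform control of the cut-off scale $s_0(x)$ across atoms; this is handled by Egorov's theorem applied to the a.e.\ convergence $u_\delta\to 0$ on $P_j$, which renders $s_0$ measurable with controllable distribution, so that the atom-wise bounds assemble into an integrable majorant for $\sup_{\delta>0}\{-\log w_\delta^{i}\}$.
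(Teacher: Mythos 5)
There is a genuine gap: the weak $(1,1)$ maximal bound you invoke is the wrong inequality for this problem, and it does not deliver the exponential tail needed to make the layer-cake integral converge. Writing $U=\sup_\delta u_\delta$ as the maximal function of $\mathbf{1}_{A\setminus P_j}$, the weak $(1,1)$ estimate gives
$\nu_A(\{U>t\})\le C\,\nu_A(A\setminus P_j)/t$, and with $t=1-s$ this bound tends to the \emph{constant} $C\,\nu_A(A\setminus P_j)$ as $s\to0$. Plugging that into your layer-cake integral $\int_0^1\nu_A(\{U>1-s\})\,ds/s$ leaves an integrand comparable to $\nu_A(A\setminus P_j)/s$ near $s=0$, which diverges. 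The patch via Lebesgue differentiation and Egorov does not repair this: Egorov gives uniform convergence of $u_\delta$ outside a set of arbitrarily small $\nu_A$-measure, but on that exceptional set you still need an integrable majorant for $-\log(1-U)$, and producing such a majorant is exactly the content of the lemma. The argument is circular at that point.

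The way to get the required decay (and what the cited reference, following Ledrappier--Young, actually does) is to run the Besicovitch covering argument directly on the superlevel set $\{x\in A\cap P_j:\exists\,\delta,\ w_\delta^i(x)<e^{-\alpha}\}$: each such $x$ carries a transversal ball $B$ with $\nu_A(B\cap P_j)<e^{-\alpha}\nu_A(B)$, and after extracting a subfamily with bounded overlap $\kappa$ (possible in this one-dimensional transversal direction with an absolute constant), one sums these inequalities to obtain $\nu_A(\{\sup_\delta(-\log w_\delta^i)>\alpha\}\cap P_j)\le\kappa e^{-\alpha}$. The $e^{-\alpha}$ factor survives because it multiplies $\nu_A(B)$ \emph{before} summation, not because of a weak $(1,1)$ bound on $M(\mathbf{1}_{A\setminus P_j})$. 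Summing over the finitely many atoms $P_j$, integrating over $A$, and then integrating the exponential tail gives membership in $L^1(\lambda)$. Your reduction $\sup_\delta\{-\log w_\delta^i\}=-\log(1-U)$ and the disintegration over $\xi^{i-1}$ are fine; the step that must be replaced is the use of the weak $(1,1)$ inequality.
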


\begin{proof}
  The proof is a slight modification of the proof of \cite[Lemma~3.6]{Ba}.
\end{proof}

\begin{theorem}[Maker \cite{M}]\label{lmaker}
	Let $T\colon X\to X$ be an endomorphism on a compact set $X\subset\R^d$ and let $m$ be a $T$-invariant ergodic measure. Moreover, let $h_{p,k}\colon X\to\R$ be a family of functions for which $\sup_{p,l}h_{p,l}\in L^1(m)$ and $\lim_{p-l\rightarrow\infty}h_{p,l}=h$ in $L^1(m)$ and for $m$-almost everywhere, where $h\in L^1(m)$. Then
	\[
	\lim_{p\rightarrow\infty}\frac{1}{p}\sum_{l=0}^{p-1}h_{p,l}(T^lx)=\int h(x)\dd m(x)
	\]
	for $m$-almost every $x\in X$.
\end{theorem}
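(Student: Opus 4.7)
The plan is to reduce the statement to three applications of Birkhoff's Ergodic Theorem via a careful splitting of the Ces\`aro sum at an appropriate cutoff. The main auxiliary objects are the error majorants
$$
H_k(x)=\sup_{p-l\geq k}|h_{p,l}(x)-h(x)|,\qquad k\in\N.
$$
The hypothesis that $\sup_{p,l}h_{p,l}\in L^1(m)$, together with $h\in L^1(m)$, yields an integrable function $M$ dominating every $|h_{p,l}|$ (and hence every $|h_{p,l}-h|$), so each $H_k$ is in $L^1(m)$. The pointwise $m$-a.e.\ convergence $h_{p,l}\to h$ as $p-l\to\infty$ forces $H_k\searrow 0$ pointwise $m$-a.e., and the dominated convergence theorem then gives $\int H_k\dd m\to0$ as $k\to\infty$.

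Fix $\varepsilon>0$ and choose $k$ with $\int H_k\dd m<\varepsilon$. For every $p>k$ I would split
$$
\frac{1}{p}\sum_{l=0}^{p-1}h_{p,l}(T^lx)=\frac{1}{p}\sum_{l=0}^{p-k-1}h_{p,l}(T^lx)+\frac{1}{p}\sum_{l=p-k}^{p-1}h_{p,l}(T^lx).
$$
The tail contains only $k$ summands, each bounded pointwise by $M(T^lx)$. By Birkhoff's theorem applied to $M$, the averages $\tfrac1p\sum_{l=0}^{p-1}M(T^lx)$ converge and are in particular bounded, so the tail divided by $p\to\infty$ tends to $0$ $m$-almost everywhere. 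For the main sum, every index satisfies $p-l\geq k$, hence $|h_{p,l}(T^lx)-h(T^lx)|\leq H_k(T^lx)$, which gives
$$
\Bigl|\frac{1}{p}\sum_{l=0}^{p-k-1}h_{p,l}(T^lx)-\frac{1}{p}\sum_{l=0}^{p-k-1}h(T^lx)\Bigr|\leq\frac{1}{p}\sum_{l=0}^{p-1}H_k(T^lx).
$$
Birkhoff's theorem applied to $H_k$ shows the right-hand side converges to $\int H_k\dd m<\varepsilon$ for $m$-a.e.\ $x$, while Birkhoff's theorem applied to $h$ gives $\tfrac1p\sum_{l=0}^{p-k-1}h(T^lx)\to\int h\dd m$ (the $k$ omitted boundary terms contribute $O(k/p)$, again absorbed by the ergodic averages of $|h|$).

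Combining the three estimates yields, for $m$-a.e.\ $x$,
$$
\limsup_{p\to\infty}\Bigl|\frac{1}{p}\sum_{l=0}^{p-1}h_{p,l}(T^lx)-\int h\dd m\Bigr|\leq\varepsilon,
$$
and applying the conclusion along a countable sequence $\varepsilon_j\to 0$ and intersecting the resulting full-measure sets yields the pointwise convergence claimed. The principal technical point is the construction of the decreasing family $(H_k)$ of integrable majorants tending to zero; once that is established, the rest is a clean bookkeeping exercise interlacing Birkhoff's theorem for $h$, $H_k$, and the global dominating function $M$, with the splitting at $l=p-k$ designed so that the remaining boundary contribution of length $k$ is negligible after dividing by $p$.
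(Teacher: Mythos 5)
The paper states this theorem with a citation to Maker [M] and provides no proof of its own, so there is nothing in the text to compare against; your argument is assessed on its own terms.

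Your proof is essentially the classical argument and its structure is sound: the decreasing majorants $H_k(x)=\sup_{p-l\geq k}|h_{p,l}(x)-h(x)|$, dominated convergence giving $\int H_k\dd m\to 0$, the split of the Ces\`aro sum at $l=p-k$, three applications of Birkhoff's theorem (to $h$, to $H_k$, and to a global dominant $M$), and the diagonalisation over $\varepsilon_j\to 0$. Two points deserve attention. First, the hypothesis $\sup_{p,l}h_{p,l}\in L^1(m)$ as literally written is only a one-sided bound and does not by itself yield an integrable $M$ dominating $|h_{p,l}|$; you need to read it as $\sup_{p,l}|h_{p,l}|\in L^1(m)$, which is the hypothesis Maker actually uses and which is what the paper has available in its applications (cf.\ Lemma~\ref{ltoLY2b}, where the relevant functions are nonpositive and $\sup_{\delta}\{-\log w^i_\delta\}$ is shown to lie in $L^1$). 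Once this reading is adopted your construction of $M$ goes through. Second, your justification that the tail $\frac{1}{p}\sum_{l=p-k}^{p-1}M(T^lx)$ vanishes ``because the full ergodic averages of $M$ are bounded'' is not a complete argument --- boundedness of the Birkhoff averages does not by itself control a block of $k$ terms sitting near the end of the sum. The clean repair is to write the tail as
\[
\frac{1}{p}\sum_{l=0}^{p-1}M(T^lx)-\frac{p-k}{p}\cdot\frac{1}{p-k}\sum_{l=0}^{p-k-1}M(T^lx),
\]
a difference of two sequences both converging to $\int M\dd m$ by Birkhoff, hence tending to zero. With these two minor repairs the proof is correct and complete.
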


\begin{lemma}\label{lem:econt2}
	For $\lambda$-almost every $x=(\widehat{\pi}(i_0i_1\dots),\theta)$ we have
	\begin{equation*}
		G^{-1}\left(B^t_{i}\left(G(x),\delta\right)\right)\cap\mathcal{P}(x)=B^t_{i}(x,\delta\|A_{i_0}|E^i(G(x))\cap A_{i_0}^{-1}V_{i-1}\|)\cap\mathcal{P}(x).
	\end{equation*}
\end{lemma}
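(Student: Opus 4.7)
Since both sides of the asserted identity are automatically contained in $\xi^{i-1}(x)\cap\mathcal{P}(x)$ (the modified ball is a subset of $\xi^{i-1}$, and the same persists after applying $G$), it suffices to show, for every $y\in \xi^{i-1}(x)\cap\mathcal{P}(x)$, that $G(y)\in B^t_i(G(x),\delta)$ if and only if $y\in B^t_i(x,\delta')$, where $\delta' = \delta\|A_{i_0}|E^i(G(x))\cap A_{i_0}^{-1}V_{i-1}\|$. For such $y$ the flag coordinate is $\theta$ and the first coordinate lies in $\widehat{f}_{i_0}(\widehat{\Lambda})$, so in the $\R^{d+1}$-coordinate $G(y)-G(x) = \widehat{A}_{i_0}^{-1}(y-x)$, where $\widehat{A}_{i_0}^{-1}$ is block-diagonal with blocks $A_{i_0}^{-1}$ and $\rho^{-1}$. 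The plan is to combine this linearity with the $G$-equivariance of the Oseledets spaces $E^i$ and of the Furstenberg flag to transfer the modified ball at $G(x)$ back to $x$.

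First I would unpack the modified distance. For $\lambda$-a.e.\ $x=(\widehat{\pi}(\ii),\theta)$, Lemma~\ref{lem:angle&dim} yields the direct-sum decomposition $V_{i-1} = (E^i(\ii)\cap V_{i-1}) \oplus V_i$ (with the conventions $V_0 = \R^d$ and $V_d = \{0\}$). For $y \in \xi^{i-1}(x)\cap\mathcal{P}(x)$ write $y-x = (w+v_i,\, s)$ with $w \in E^i(\ii)\cap V_{i-1}$, $v_i \in V_i$, and $s\in\R$. The affine subspace $x + (E^i(\ii)\times\{0\})$ meets $\xi^i(y) = y + F_i(\theta)$ in the unique point $x + (w, 0)$, since the equations $e\in E^i(\ii)$ and $e - w \in V_i$ force $e = w$ (using $E^i(\ii) \cap V_i = \{0\}$). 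Hence
\begin{equation*}
  \mathrm{dist}\bigl(E^i(x)\cap\xi^i(x),\; E^i(x)\cap\xi^i(y)\bigr) = \|w\|,
\end{equation*}
so $y\in B^t_i(x,\delta)$ if and only if $\|w\|<\delta$. For $i=d$ the modified ball is the Euclidean transversal ball intersected with $\xi^{d-1}(x)$, and the same identity holds with $w$ the (first-$d$-coordinate) part of $y-x$, which lies in $V_{d-1}$.

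Next I would push forward by $G$. Using the block structure of $\widehat{A}_{i_0}^{-1}$,
\begin{equation*}
  G(y)-G(x) = \bigl(A_{i_0}^{-1}w + A_{i_0}^{-1}v_i,\; \rho^{-1}s\bigr).
\end{equation*}
By Theorem~\ref{thm:Oseledets} and the definition of the $G$-action on the flag coordinate, $A_{i_0}^{-1}w \in E^i(G(x))\cap A_{i_0}^{-1}V_{i-1}$ and $A_{i_0}^{-1}v_i \in A_{i_0}^{-1}V_i$, which is exactly the analogous splitting at $G(x)$. Applying the first step at $G(x)$, we find $G(y)\in B^t_i(G(x),\delta)$ if and only if $\|A_{i_0}^{-1}w\|<\delta$. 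Finally, since $E^i(G(x))\cap A_{i_0}^{-1}V_{i-1}$ is one-dimensional and spanned by $A_{i_0}^{-1}w$, the restricted operator norm satisfies $\|A_{i_0}|E^i(G(x))\cap A_{i_0}^{-1}V_{i-1}\| = \|w\|/\|A_{i_0}^{-1}w\|$, so $\|A_{i_0}^{-1}w\|<\delta$ is equivalent to $\|w\| < \delta\|A_{i_0}|E^i(G(x))\cap A_{i_0}^{-1}V_{i-1}\|$, which by the first step is exactly $y\in B^t_i(x,\delta')$. The main obstacle is the geometric first step---concretely identifying the distance with $\|w\|$---after which everything reduces to the block-diagonal action of $\widehat{A}_{i_0}^{-1}$ and the equivariance of the Oseledets/Furstenberg splitting.
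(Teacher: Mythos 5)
Your proof is correct and follows essentially the same route as the paper: both exploit that on $\mathcal{P}(x)$ the map $G$ acts affinely on the $\R^{d+1}$-coordinate via $\widehat{A}_{i_0}^{-1}$ and equivariantly on the Oseledets/Furstenberg splitting, so the modified ball is carried to a modified ball with radius rescaled by the restricted norm of $A_{i_0}$ on the one-dimensional line $E^i(G(x))\cap A_{i_0}^{-1}V_{i-1}$. The paper's own proof presents this via the union over local inverses $G_k$ and asserts the norm rescaling without elaboration; your explicit identification of the modified distance with $\|w\|$ (using the decomposition $V_{i-1}=(E^i(\ii)\cap V_{i-1})\oplus V_i$ and the one-dimensionality of the relevant line) fills in precisely that omitted computation.
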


\begin{proof}
	By the definition of $B^t_{i}(x,\delta)$,
	$$
	G^{-1}\left(B^t_{i}\left(G(x),\delta\right)\right)=\bigcup_{k=1}^NG_k\left(\{y\in\xi^{i-1}(G(x)):\mathrm{dist}(E^i(G(x))\cap\xi^{i}(G(x)),E^i(G(x))\cap\xi^{i}(y))<\delta\right\}),
	$$
	where $G_k$ is the local inverse of $G$, i.e.\ $G_k(x,\theta)=(\widehat{f}_k(x),A_k\theta)$.

	On the other hand, $G_k(A)\subseteq\mathcal{P}(G_k(x))$ for any $x\in\Omega$ and $A$ subset of $\Omega$ and therefore
	$$
	G^{-1}\left(B^t_{i}\left(G(x),\delta\right)\right)=\bigcup_{k=1}^NB^t_{i}(G_k(G(x)),\delta\|A_{k}|E^i(G(x))\cap A_{i_0}^{-1}V_{i-1}\|)\cap\mathcal{P}(G_k(G(x))).
	$$
	Since $\mathcal{P}$ is a partition, the statement follows.
\end{proof}

\begin{lemma}\label{lem:angle}
We have
$$
\lim_{n\to\infty}\frac{1}{n}\log \cos\sphericalangle(E^i(G^n(x))\cap A_{i_n}^{-1}\cdots A_{i_0}^{-1}V_{i-1},(A_{i_n}^{-1}\cdots A_{i_0}^{-1}V_i)^{\bot}\cap A_{i_n}^{-1}\cdots A_{i_0}^{-1}V_{i-1})=0
$$
	for $\nu\times\mu_F$-almost every $(\ii,\theta)$.
\end{lemma}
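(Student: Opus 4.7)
My plan is to recognize the cosine in the lemma as the sine of a more geometric angle between complementary Oseledets subspaces, and then to invoke the classical temperedness of the Oseledets splitting on the invertible extension used in the proof of Theorem~\ref{thm:furst}.

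First, by Lemma~\ref{lem:angle&dim}, $E^i(\ii)\cap V_{i-1}$ is one-dimensional, and since $V_i\subset V_{i-1}$ is a hyperplane inside $V_{i-1}$, so is $V_i^{\bot}\cap V_{i-1}$. Writing a unit vector $v\in E^i\cap V_{i-1}$ orthogonally in $\R^d$ as $v=p+q$ with $p\in V_i$ and $q\in V_i^{\bot}\cap V_{i-1}$ (both pieces lie in $V_{i-1}$ since $v,p\in V_{i-1}$), one obtains
\begin{equation*}
\cos\sphericalangle(E^i\cap V_{i-1},\,V_i^{\bot}\cap V_{i-1})=\|q\|=\sin\sphericalangle(E^i\cap V_{i-1},\,V_i),
\end{equation*}
so it suffices to show that this sine decays subexponentially along $G$-orbits.

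Next I would pass to the invertible extension $(\hat{\Sigma}\times\mathbb{F},\mu,\hat{T})$ from the proof of Theorem~\ref{thm:furst}. Under the simple-spectrum assumption the two-sided Oseledets theorem gives a one-dimensional splitting $\R^d=\hat{E}^1(\ii)\oplus\cdots\oplus\hat{E}^d(\ii)$ with $E^i=\hat{E}^1\oplus\cdots\oplus\hat{E}^i$, and reading off the explicit form of $\mu$ from that proof one has $V_j=\hat{E}^{j+1}\oplus\cdots\oplus\hat{E}^d$ for $j\in\{1,\ldots,d-1\}$. Direct-sum bookkeeping then yields $E^i\cap V_{i-1}=\hat{E}^i$ and $V_i=\hat{E}^{i+1}\oplus\cdots\oplus\hat{E}^d\subset\bigoplus_{k\neq i}\hat{E}^k$, whence
\begin{equation*}
\sin\sphericalangle(\hat{E}^i,V_i)\geq\sin\sphericalangle\Bigl(\hat{E}^i,\,\bigoplus_{k\neq i}\hat{E}^k\Bigr)=\frac{1}{\|\pi^i\|},
\end{equation*}
where $\pi^i$ denotes the oblique projection of $\R^d$ onto $\hat{E}^i$ along the complementary direct sum.

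The concluding ingredient is the temperedness of the Oseledets splitting: the multiplicative ergodic theorem applied to the two-sided cocycle (see \cite[Theorem~3.4.11]{A}) gives that $\|\pi^i\|$ is tempered, meaning $\frac{1}{n}\log\|\pi^i\|\circ\hat{T}^n\to 0$ for $\mu$-almost every $(\ii,\theta)\in\hat{\Sigma}\times\mathbb{F}$, as a standard consequence of the Lyapunov regularity of the splitting. The canonical factor map $\hat{\Sigma}\times\mathbb{F}\to\Sigma\times\mathbb{F}$ intertwines $\hat{T}$ with $G$ and pushes $\mu$ to $\nu\times\mu_F$ by Theorem~\ref{thm:furst}, and since both $\hat{E}^i=E^i(\ii)\cap V_{i-1}$ and $V_i$ are measurable in $(\ii,\theta)$ alone, this tempered bound descends to $\nu\times\mu_F$-almost every $(\ii,\theta)$, giving $\liminf_{n\to\infty}\frac{1}{n}\log\sin\sphericalangle(\hat{E}^i,V_i)(G^nx)\geq 0$; combined with $\sin\leq 1$ we conclude the limit equals $0$. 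I expect the main subtlety to be this descent step, since the individual lines $\hat{E}^k$ genuinely depend on the past coordinates and one must carefully verify that the tempered growth of $\|\pi^i\|$ on the invertible system really transfers along $G$-orbits on the factor.
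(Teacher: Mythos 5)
Your proof is correct, but it takes a genuinely different path from the paper's. The paper works entirely inside the one-sided system: after the same initial reduction $\cos\sphericalangle(E^i\cap V_{i-1},V_i^{\bot}\cap V_{i-1})=\sin\sphericalangle(E^i\cap V_{i-1},V_i)$, it rewrites the sine at time $n$ as an exact ratio
\[
\frac{\|\bigwedge^{d-i+1}A_{i_n}^{-1}\cdots A_{i_0}^{-1}|\bigwedge^{d-i+1}V_{i-1}\|}{\|A_{i_n}^{-1}\cdots A_{i_0}^{-1}|E^i(\ii)\cap V_{i-1}\|\,\|\bigwedge^{d-i}A_{i_n}^{-1}\cdots A_{i_0}^{-1}|\bigwedge^{d-i}V_i\|}\cdot\frac{\|v_1\wedge\cdots\wedge v_{d-i+1}\|}{\|v_1\|\,\|v_2\wedge\cdots\wedge v_{d-i+1}\|},
\]
observes by \eqref{eq:what} that the time-$0$ angle factor is nonzero a.s., and then reads off the growth rates of the three wedge-power norms from Theorem~\ref{thm:furst} and Lemma~\ref{lem:angle&dim}, which cancel to give rate $0$. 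You instead lift to the invertible extension $(\hat\Sigma\times\mathbb{F},\mu,\hat T)$ from the proof of Theorem~\ref{thm:furst}, identify $E^i\cap V_{i-1}=\hat E^i$ and $V_i=\hat E^{i+1}\oplus\cdots\oplus\hat E^d$ via the two-sided Oseledets splitting (valid $\mu$-a.s.\ since $\mu$ is supported on the graph of the Oseledets flag), lower-bound the sine by $1/\|\pi^i\|$, and invoke the temperedness of the oblique projections from \cite[Theorem~3.4.11]{A}. The bookkeeping in your argument is sound: the inequality $\sin\sphericalangle(\hat E^i,V_i)\geq\sin\sphericalangle(\hat E^i,\bigoplus_{k\neq i}\hat E^k)=1/\|\pi^i\|$ is correct because $V_i$ is contained in the complementary sum, $\hat E^i$ is one-dimensional, and $\|\pi^i\|^{-1}$ is exactly the distance from a unit vector in $\hat E^i$ to the complement; moreover the quantity whose temperedness you want is $\mathcal{F}^+\otimes\mathcal{B}(\mathbb{F})$-measurable, so the full-$\mu$-measure set on which the liminf is $\geq0$ descends to a full-$(\nu\times\mu_F)$-measure set, which you correctly flag as the place that needs care. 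The trade-off: your argument is shorter and appeals directly to a standard regularity statement, at the cost of working in the invertible extension and of relying on the specific realization of $\mu_F$ built in Theorem~\ref{thm:furst}; the paper's exterior-power computation stays entirely in the one-sided setting and only uses the one-sided Lyapunov exponents of the restricted cocycles, which fits more naturally with the way $\mu_F$ is actually used elsewhere in Section~\ref{sec:proofLY2} (where one only has \eqref{eq:Furstenberg}, not the two-sided splitting, available).
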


\begin{proof}
	Since $\dim E^i(\sigma^n\ii)\cap A_{i_n}^{-1}\cdots A_{i_0}^{-1}V_{i-1})=\dim A_{i_0}^{-1}V_{i-1},(A_{i_n}^{-1}\cdots A_{i_0}^{-1}V_i)^{\bot}\cap A_{i_n}^{-1}\cdots A_{i_0}^{-1}V_{i-1}=1$, we get, by the definition angles between subspaces, that
	\begin{align*}
		\cos\sphericalangle(&E^i(G^n(x))\cap A_{i_n}^{-1}\cdots A_{i_0}^{-1}V_{i-1},(A_{i_n}^{-1}\cdots A_{i_0}^{-1}V_i)^{\bot}\cap A_{i_n}^{-1}\cdots A_{i_0}^{-1}V_{i-1})\\
		&=\sin\sphericalangle(E^i(G^n(x))\cap A_{i_n}^{-1}\cdots A_{i_0}^{-1}V_{i-1},A_{i_n}^{-1}\cdots A_{i_0}^{-1}V_i)=
		\frac{\|v\wedge u_1\wedge\cdots\wedge u_{d-i}\|}{\|v\|\|u_1\wedge\cdots\wedge u_{d-i}\|},
	\end{align*}
	where $\mathrm{span}\{u_1,\dots,u_{d-i}\}=A_{i_n}^{-1}\cdots A_{i_0}^{-1}V_i$ and $0\neq v\in E^i(G^n(x))\cap A_{i_n}^{-1}\cdots A_{i_0}^{-1}V_{i-1}$. Furthermore, let $\{v_1,\dots,v_{d-i+1}\}$ be vectors from $V_{i-1}$ such that
	$$
	v_1\in E^i(\ii)\cap V_{i-1}\quad \text{ and }\quad \mathrm{span}\{v_2,\dots,v_{d-i+1}\}=V_i.
	$$
	Thus,
	\begin{multline*}
		\frac{\|(A_{i_n}^{-1}\cdots A_{i_0}^{-1}v_1)\wedge \cdots\wedge (A_{i_n}^{-1}\cdots A_{i_0}^{-1}v_{d-i+1})\|}{\|(A_{i_n}^{-1}\cdots A_{i_0}^{-1}v_1\|\|(A_{i_n}^{-1}\cdots A_{i_0}^{-1}v_2)\wedge \cdots\wedge (A_{i_n}^{-1}\cdots A_{i_0}^{-1}v_{d-i+1})\|}\\
		=\frac{\|\bigwedge^{d-i+1}A_{i_n}^{-1}\cdots A_{i_0}^{-1}|\bigwedge^{d-i+1}V_{i-1}\|}{\|(A_{i_n}^{-1}\cdots A_{i_0}^{-1}|E^i(\ii)\cap V_{i-1}\|\|\bigwedge^{d-i} A_{i_n}^{-1}\cdots A_{i_0}^{-1}|\bigwedge^{d-i}V_i\|}\cdot\frac{\|v_1\wedge\cdots\wedge v_{d-i+1}\|}{\|v_1\|\|v_2\wedge\cdots\wedge v_{d-i+1}\|}.
	\end{multline*}
	By the definition of the Furstenberg measure and Oseledec's Theorem,
	 \begin{equation}\label{eq:what}
	 \lambda(\{(\widehat{\pi}(\ii),\theta):\cos\sphericalangle(E^i(x)\cap V_{i-1},V_i^{\bot}\cap V_{i-1})=0\})=0.
	 \end{equation}
	Thus the assertion follows from Theorem~\ref{thm:furst} and Lemma~\ref{lem:angle&dim}.
\end{proof}

To simplify notation, we denote the subspace $E^i(x)\cap V_{i-1}$ by $K^i(x)$ for all $i \in \{2,\dots,d-1\}$. We also write $K^d(x)=F_d(\theta)$ and $K^1(x)=E^1(x)$.

\begin{proof}[Proof of Proposition~\ref{pLY2}]
	By \eqref{eq:what}, we may assume that $\cos\sphericalangle(E^i(x)\cap V_{i-1},V_i^{\bot}\cap V_{i-1})\neq0$.  Then, by the definition of the transversal measure and the transversal ball, it is enough to show that
	\begin{equation*}
	\lim_{\delta\rightarrow0+}\frac{\log\cm{x}{\xi^i}(B^t_{i+1}(x,\delta))}{\log\delta}=\frac{H^i-H^{i+1}}{\ly{i+1}}
	\end{equation*}
	for $\lambda$-almost every $x$. Because of the exponential shrinking rate, this is equivalent to
	\begin{equation}\label{elocdimLY2}
	\lim_{n\rightarrow\infty}\frac{\log\cm{x}{\xi^i}\left(B^t_{i+1}(x,\|A_{i_{0}}\cdots A_{i_{n-1}}|K^{i+1}(G^{n}(x))\|)\right)}{\log\|A_{i_{0}}\cdots A_{i_{n-1}}|K^{i+1}(G^n(x))\|}=\frac{H^i-H^{i+1}}{\ly{i+1}}
	\end{equation}
	for $\lambda$-almost every $x$. We write the measure of the ball as
	\begin{equation}\label{etech3}
	\begin{split}
	\cm{x}{\xi^i}&\left(B^t_{i+1}(x,\|A_{i_{0}}\cdots A_{i_{n-1}}|K^{i+1}(G^{n}(x))\|)\right)\\
	&\quad=\cm{G^{n}(x)}{\xi^{i}}\left(B^t_{i+1}(G^{n}(x),1)\right)\cdot\prod_{l=1}^n\dfrac{\cm{G^{l-1}(x)}{\xi^{i}}\left(B^t_{i+1}(G^{l-1}(x),\|A_{i_{l-1}}\cdots A_{i_{n-1}}|K^{i+1}(G^{n}(x))\|)\right)}{\cm{G^{l}(x)}{\xi^{i}}\left(B^t_{i+1}(G^{l}(x),\|A_{i_{l}}\cdots A_{i_{n-1}}|K^{i+1}(G^{n}(x))\|)\right)}.
	\end{split}
	\end{equation}
	In the above calculation we interpret $\|A_{i_{l}}\cdots A_{i_{n-1}}|K^{i+1}(G^{n}(x))\|=1$ when $l=n$.
	
Observe that, by \eqref{econdcond} and Lemma~\ref{lem:invariance}, we have
\begin{equation}\label{econt1}
\frac{\cm{x}{\xi^{i}}(G^{-1}(B^t_{i+1}(G(x),\delta))\cap\mathcal{P}(x))}{\cm{x}{\xi^{i}}(\mathcal{P}(x))}=\cm{x}{\xi^i\vee\mathcal{P}}(G^{-1}(B^t_{i+1}(G(x),\delta)))=\cm{G(x)}{\xi^{i}}(B^t_{i+1}(G(x),\delta))).
\end{equation}
	Applying \eqref{econt1} and Lemma~\ref{lem:econt2}, we get
	\begin{align}
	&\cm{G^{l}(x)}{\xi^{i}}\left(B^t_{i+1}(G^{l}(x),\|A_{i_{l}}\cdots A_{i_{n-1}}|K^{i+1}(G^{n}(x))\|)\right)\notag\\
	&\;\quad=\cm{G^{l-1}(x)}{\xi^{i}\vee\mathcal{P}}\left(G^{-1}(B^t_{i+1}(G^{l}(x),\|A_{i_{l}}\cdots A_{i_{n-1}}|K^{i+1}(G^{n}(x))\|))\right)\notag\\
	&\;\quad=\dfrac{\cm{G^{l-1}(x)}{\xi^{i}}\left(G^{-1}(B^t_{i+1}(G^{l}(x),\|A_{i_{l}}\cdots A_{i_{n-1}}|K^{i+1}(G^{n}(x))\|)\cap\mathcal{P}(G^{l-1}(x)))\right)}{\cm{G^{l-1}(x)}{\xi^{i}}(\mathcal{P}(G^{l-1}(x)))}\label{etech2}\\
	&\;\quad=\dfrac{\cm{G^{l-1}(x)}{\xi^{i}}\left(B^t_{i+1}(G^{l-1}(x),\|A_{i_{l-1}}|K^{i+1}(G^l(x))\|\|A_{i_{l}}\cdots A_{i_{n-1}}|K^{i+1}(G^{n}(x))\|)\cap\mathcal{P}(G^{l-1}(x)))\right)}{\cm{G^{l-1}(x)}{\xi^{i}}(\mathcal{P}(G^{l-1}(x)))}\notag
	\end{align}
	for every $l\in\{1,\dots,n\}$.
	Since $K^{i+1}$'s are one dimensional invariant subspaces, $$\|A_{i_{l-1}}|K^{i+1}(G^l(x))\|\|A_{i_{l}}\cdots A_{i_{n-1}}|K^{i+1}(G^{n}(x))\|=\|A_{i_{l-1}}A_{i_{l}}\cdots A_{i_{n-1}}|K^{i+1}(G^{n}(x))\|.$$ Hence, applying \eqref{etech2} in the denominator of \eqref{etech3}, we get	
	\begin{multline*}
	\frac{1}{n}\log\cm{x}{\xi^i}\left(B^t_{i+1}(x,\|A_{i_{0}}\cdots A_{i_{n-1}}|K^{i+1}(G^{n}(x))\|)\right)=
	\frac{1}{n}\log\cm{G^{n}(x)}{\xi^{i}}\left(B^t_{i+1}(G^{n}(x),1)\right)\\
	-\frac{1}{n}\sum_{l=1}^n\log w^{i+1}_{\|A_{i_{l-1}}\cdots A_{i_{n-1}}|K^{i+1}(G^{n}(x))\|}(G^{l-1}(x))+\frac{1}{n}\sum_{l=1}^n\log\cm{G^{l-1}(x)}{\xi^{i}}(\mathcal{P}(G^{l-1}(x))),
	\end{multline*}
	where $w^{i+1}_{\delta}$ was defined in \eqref{eq:wdelta}. Let us define a function $$h_{n,k}^i(x)=\log w^{i+1}_{\|A_{i_{0}}\cdots A_{i_{n-k}}|K^{i+1}(G^{n-k+1}(x))\|}(x).$$ Then
	\begin{equation}\label{eq:entropy}
	\begin{split}
	\frac{1}{n}\log\cm{x}{\xi^i}&\left(B^t_{i+1}(x,\|A_{i_{0}}\cdots A_{i_{n}}|K^i(G^{n}(x))\|)\right)=
	\frac{1}{n}\log\cm{G^{n}(x)}{\xi^{i}}\left(B^t_{i+1}(G^{n}(x),1)\right)\\
	&\quad\quad\quad\quad\quad\quad\quad-\frac{1}{n}\sum_{l=1}^nh_{n,l}^i(G^{l-1}(x))+\frac{1}{n}\sum_{l=1}^n\log\cm{G^{l-1}(x)}{\xi^{i}}(\mathcal{P}(G^{l-1}(x))).
	\end{split}
	\end{equation}
	Since $\|A_{i_{0}}\cdots A_{i_{n-l}}|K^i(G^{n-l+1}(x))\|\rightarrow0$ uniformly on $\Omega$ as $n-l\rightarrow\infty$, we get that $\lim_{n-l\rightarrow\infty}h_{n,l}^i=\log w^{i+1}$ in $L^1(\lambda)$ and for $\lambda$-almost everywhere. Furthermore, by Lemma~\ref{ltoLY2b}, we can apply Maker's Ergodic Theorem (Theorem~\ref{lmaker}) and hence
	\begin{equation}\label{eq:tofin1}
	\lim_{n\rightarrow\infty}\frac{1}{n}\sum_{l=1}^nh_{n,l}^i(G^{l-1}(x))=H^{i+1}
	\end{equation}
	for $\lambda$-almost every $x$.
	On the other hand, by Birkhoff's Ergodic Theorem
	\begin{equation}\label{eq:tofin2}
		\lim_{n\rightarrow\infty}\frac{1}{n}\sum_{l=1}^n\log\cm{G^{l-1}(x)}{\xi^{i}}(\mathcal{P}(G^{l-1}(x)))=-H^{i}
	\end{equation}
	for $\lambda$-almost every $x$. Finally, we note that
	\begin{align*}
	\lim_{n\rightarrow\infty}\frac{1}{n}&\log\cm{G^{n}(x)}{\xi^{i}}\left(B^t_{i+1}(G^{n}(x),1)\right)\\
	&=\lim_{n\rightarrow\infty}\frac{1}{n}\log\cm{G^{n}(x)}{\xi^{i}}\left(B^T_{i+1}(G^{n}(x),\sin\sphericalangle(K^{i+1}(G^n(x)),F_{i+1}(G^n(x)))\right).
	\end{align*}
	If $i=d-1$, then we clearly have $\cos\sphericalangle(K^i(G^n(x)),F_{i+1}(G^n(x)))=\sin\sphericalangle(\theta,\langle(0,\ldots,0,1)\rangle)=1$ for every $n\geq1$ and $x\in\Omega$. Therefore
	$$
	\lim_{n\rightarrow\infty}\frac{1}{n}\log\cm{G^{n}(x)}{\xi^{d-1}}\left(B^T_{d}(G^{n}(x),1)\right)=0
	$$
	for $\lambda$-almost every $x\in\Omega$.
	
	Let us denote the subspace $(A_{i_n}^{-1}\cdots A_{i_0}^{-1}V_i)^{\bot}\cap A_{i_n}^{-1}\cdots A_{i_0}^{-1}V_{i-1}$ by $L^i(G^n(x))$.
	On the other hand, if $0\leq i\leq d-2$, then, by Lemma~\ref{lem:angle}, for every $\varepsilon>0$ there exists an $n_0=n_0(x,\varepsilon)>0$ such that
	$$
	e^{-\varepsilon n}\leq\sin\sphericalangle(K^{i+1}(G^n(x)),L^{i+1}(G^n(x)))\leq1
	$$
	for every $n\geq n_0$.
	Hence,
	\begin{align*}
	0&\geq\limsup_{n\rightarrow\infty}\frac{1}{n}\log\cm{G^{n}(x)}{\xi^{i}}\left(B^T_{i+1}(G^{n}(x),\sin\sphericalangle(K^{i+1}(G^n(x)),L^{i+1}(G^n(x)))\right)\\
	&\geq \varepsilon\liminf_{n\rightarrow\infty}\frac{1}{\varepsilon n}\log\cm{G^{n}(x)}{\xi^{i}}\left(B^T_{i+1}(G^{n}(x),e^{-\varepsilon n}\right)\geq\varepsilon\liminf_{n\rightarrow\infty}\frac{1}{\varepsilon n}\log\cm{G^{n}(x)}{\xi^{i}}\bigl(\mathcal{P}_0^{\lceil c\varepsilon n\rceil}(G^{n}(x))\bigr)\\
	&\geq\varepsilon c\log p_{\min},
	\end{align*}
	where $c=\max_{i\in\{1,\dots,N\}}\lceil(-\log\|A_i\|)^{-1}\rceil$ and $p_{\min}=\min_{i\in\{1,\dots,N\}}p_i$. Since the inequality holds for any $\varepsilon>0$ we get
	\begin{equation}\label{eq:tofin3}
	\lim_{n\rightarrow\infty}\frac{1}{n}\log\cm{G^{n}(x)}{\xi^{i}}\left(B^t_{i+1}(G^{n}(x),1)\right)=0
	\end{equation}
	for every $i$ and $\lambda$-almost every $x$.
	Now \eqref{eq:entropy}, together with \eqref{eq:tofin1}, \eqref{eq:tofin2}, and \eqref{eq:tofin3}, implies
	\begin{equation}\label{eentLY2}
	\lim_{n\rightarrow\infty}\frac{1}{n}\log\cm{x}{\xi^i}\left(B^t_{i+1}(x,\|A_{i_{0}}\cdots A_{i_{n}}|K^{i+1}(G^{n}(x))\|)\right)=-H^i+H^{i+1}
	\end{equation}
	for $\lambda$-almost every $x$.
	
	To finish the proof, observe that for $x=(\widehat{\pi}(\ii),\theta)$ we have
	$$
	\|A_{i_0}\cdots A_{i_n}|K^{i+1}(G^n(x))\|=\|A_{i_n}^{-1}\cdots A_{i_0}^{-1}|K^{i+1}(x)\|^{-1}
	$$
	and hence, by Theorem~\ref{thm:Oseledets} and \eqref{eq:Furstenberg},
	$$\lim_{n\to\infty}\frac{1}{n}\log\|A_{i_{0}}\cdots A_{i_{n}}|K^{i+1}(G^n(x)\|=-\ly{i+1}$$
	for $\lambda$-almost every $x$.
	This together with \eqref{eentLY2} implies \eqref{elocdimLY2}.
\end{proof}

\begin{prop}\label{prop:LY3}
	For $\lambda$-almost every $x=(\widehat{\pi}(\ii),\theta)$ the measure $\lambda_{x}^{\xi^i}$ is exact dimensional and
	$$
	\dim\lambda_{x}^{\xi^i}=\frac{H^{d}}{-\log\rho}+\sum_{k=i}^{d-1}\frac{H^k-H^{k+1}}{\ly{k+1}},
	$$
	for all $i\leq d-1$.
\end{prop}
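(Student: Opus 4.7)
The plan is to proceed by downward induction on $i$, with the base case $i=d$ provided by Proposition~\ref{prop:1}. Setting
\begin{equation*}
	\alpha_i := \frac{H^d}{-\log\rho} + \sum_{k=i}^{d-1}\frac{H^k-H^{k+1}}{\ly{k+1}},
\end{equation*}
so that $\alpha_i=\alpha_{i+1}+(H^i-H^{i+1})/\ly{i+1}$, I would assume $\lambda_y^{\xi^{i+1}}$ is exact dimensional with dimension $\alpha_{i+1}$ at $\lambda$-almost every $y$ and target the analogous statement at level $i$.

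For the lower bounds on both local dimensions, I would apply Lemma~\ref{llbmuubtmu} to $\lambda_x^{\xi^i}$, viewed as a compactly supported Radon measure on the affine slice $x+F_i(\theta)$, with the subspace $V=F_{i+1}(\theta)^{\bot}$. By \eqref{econdcond}, the conditionals of $\lambda_x^{\xi^i}$ along the fibers of $\proj_V$---namely the $F_{i+1}(\theta)$-cosets inside $\xi^i(x)$---coincide with $\lambda_y^{\xi^{i+1}}$, whose local dimension is $\alpha_{i+1}$ by the inductive hypothesis. The projected measure equals $(\lambda_x^{\xi^i})^T_{F_{i+1}(\theta)^{\bot}}$, which by Proposition~\ref{pLY2} is exact dimensional with dimension $(H^i-H^{i+1})/\ly{i+1}$. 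Lemma~\ref{llbmuubtmu} then delivers simultaneously $\ldimloc(\lambda_x^{\xi^i},x)\geq\alpha_i$ and $\udimloc(\lambda_x^{\xi^i},x)\geq\alpha_i$ for $\lambda$-almost every $x$.

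For the matching upper bound $\udimloc(\lambda_x^{\xi^i},x)\leq\alpha_i$, a lower estimate on $\lambda_x^{\xi^i}(B(x,r))$ is needed. Fixing $\varepsilon>0$, Egoroff's theorem applied to the inductive hypothesis, to Proposition~\ref{pLY2}, and to the subexponential angle estimate of Lemma~\ref{lem:angle} selects a set $\Omega_{\varepsilon}$ with $\lambda(\Omega_{\varepsilon})>1-\varepsilon$ and $r_0>0$ such that for every $y=(\widehat{\pi}(\jj),\tau)\in\Omega_{\varepsilon}$ and $0<r<r_0$ one has $\lambda_y^{\xi^{i+1}}(B(y,r))\geq r^{\alpha_{i+1}+\varepsilon}$, $(\lambda_y^{\xi^i})^T_{F_{i+1}(\tau)^{\bot}}(B^T_{F_{i+1}(\tau)^{\bot}}(y,r))\geq r^{(H^i-H^{i+1})/\ly{i+1}+\varepsilon}$, and the relevant Oseledets--Furstenberg angles exceed $r^{\varepsilon}$. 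I would then disintegrate $\lambda_x^{\xi^i}$ along the $F_{i+1}(\theta)$-foliation of $\xi^i(x)$, restrict the integration to $y$'s that are transversally within $r/2$ of $x$ and lie in $\Omega_{\varepsilon}$, and use the angle estimate to replace the ambient Euclidean ball $B(x,r)\cap\xi^{i+1}(y)$ by a slice-ball of comparable radius; this yields the product-type lower bound $\lambda_x^{\xi^i}(B(x,r))\geq c r^{\alpha_i+3\varepsilon}$. Letting $\varepsilon\to 0$ then closes the induction.

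The main obstacle is the Fubini-type lower bound appearing in the upper bound step: the $F_{i+1}(\theta)$-cosets are not orthogonal to $F_{i+1}(\theta)^{\bot}$ in the Oseledets basis, hence ambient Euclidean balls are not product rectangles over the foliation. Lemma~\ref{lem:angle} is what makes this tractable, guaranteeing that the skew between the Oseledets and Furstenberg directions decays only subexponentially along the dynamics and is therefore harmless for the logarithmic dimension calculation. Combined with the Egoroff selection of fibers on which the conditional scaling is uniform, this is expected to suffice to produce the required product-like estimate and complete the argument.
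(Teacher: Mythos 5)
Your lower-bound argument is exactly the paper's: disintegrate along the $F_{i+1}(\theta)$-cosets via \eqref{econdcond}, apply Lemma~\ref{llbmuubtmu} with $V=F_{i+1}(\theta)^\bot$, and combine the inductive hypothesis with Proposition~\ref{pLY2}. The upper bound, however, is where the proposal has a genuine gap. A naive Fubini estimate over the foliation does not close: to lower-bound $\lambda_x^{\xi^i}(B(x,r))$ you restrict the integration to fibers that carry a ``good'' point near $x$, but the $\lambda_x^{\xi^i}$-mass of those fibers is (after the Besicovitch density step you would need) comparable to $\lambda_x^{\xi^i}(B(x,r/2))$ --- the very quantity you are trying to estimate. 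Exact-dimensionality of $\lambda_y^{\xi^{i+1}}$ controls its mass on balls \emph{centred at $y$}, not near the foot of $x$ on that fibre, so there is no coherence across nearby fibres in a purely measure-theoretic disintegration argument. You have also misidentified the obstacle: the decomposition into $F_{i+1}(\theta)$ and $F_{i+1}(\theta)^\bot$ is genuinely orthogonal, so Euclidean balls \emph{are} product-compatible here; Lemma~\ref{lem:angle} is used in the proof of Proposition~\ref{pLY2} (to pass between $B^t$ and $B^T$), not in Proposition~\ref{prop:LY3}.

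What the paper actually does for the upper bound is a cylinder-counting argument that injects the dynamical structure. By Egorov it arranges, on a large set $J_1$, uniform Shannon--McMillan-type estimates $\cm{x}{\xi^{i+1}}(\mathcal{P}_0^{m-1}(x))\leq e^{-m(H^{i+1}-\varepsilon)}$ and $\cm{x}{\xi^{i}}(\mathcal{P}_0^{m-1}(x))\geq e^{-m(H^i+\varepsilon)}$, the inductive lower bound for $\cm{x}{\xi^{i+1}}(B(x,\cdot))$, and the geometric inclusion $\mathcal{P}_0^{m-1}(x)\cap\xi^i(x)\subseteq B(x,e^{-m(\ly{i+1}-2\varepsilon)})$. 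A Besicovitch density argument (applied to $\cm{x}{\xi^{i+1}}$ on the fibre) ensures the ball retains at least half its conditional mass inside $J_1$. Comparing the mass of the ball with the number of level-$m$ cylinders meeting $J_1\cap B(x,\cdot)$ gives a lower bound for that count; multiplying by the minimal $\cm{x}{\xi^i}$-measure of a cylinder yields the lower bound $\cm{x}{\xi^i}(B(x,2e^{-m(\ly{i+1}-2\varepsilon)}))\geq \tfrac12 e^{-m(\ly{i+1}-2\varepsilon)(D_{i+1}+\varepsilon)}e^{m(H^{i+1}-\varepsilon)}e^{-m(H^i+\varepsilon)}$, and the upper local dimension bound follows. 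Your proposal never mentions the cylinder partition $\mathcal{P}_0^{m-1}$ or the entropy estimates for it, and without them I do not see how the ``Egoroff selection of fibres'' alone can produce the required estimate.
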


\begin{proof}
		We prove the theorem by induction. First, we show the statement for the case $i=d-1$. Since the proof of the first step and the proof of the inductional step does not differ much, we use $i$ instead of $d-1$. By the uniqueness of conditional measures and \eqref{econdcond},
		$$
		\cm{x}{\xi^{i}}=\int\cm{y}{\xi^{i+1}}\dd\cm{x}{\xi^{i}}(y)
		$$
		for $\lambda$-almost every $x$.	Thus, choosing $V=F_{i+1}(\theta)^{\bot}$ in Lemma~\ref{llbmuubtmu} and recalling Proposition~\ref{prop:1} and Proposition~\ref{pLY2}, we get
		$$
		\ldimloc(\cm{x}{\xi^{i}},x)\geq\ldimloc(\cm{x}{\xi^{i+1}},x)+\dim(\lambda_{x}^{\xi^i})^T_{F_{i+1}(\theta)^{\bot}}=\frac{H^{d}}{-\log\rho}+\sum_{k=i}^{d-1}\frac{H^k-H^{k+1}}{\ly{k+1}}
		$$
		for $\lambda$-almost every $x$.
				
		Let us next show the upper bound. For simplicity, let $D_{i}=\frac{H^{d}}{-\log\rho}+\sum_{k=i}^{d-1}\frac{H^k-H^{k+1}}{\ly{k+1}}$. By Egorov's theorem there exists a set $J_1$ such that $\lambda(J_1)>1-\varepsilon$ and
		\begin{align}
		\cm{x}{\xi^{i+1}}(B(x,e^{-m(\ly{i+1}-2\varepsilon)}))&\geq e^{-m(\ly{i+1}-2\varepsilon)(D_{i+1}+\varepsilon)}\label{ely31} \\
		\cm{x}{\xi^{i+1}}(\mathcal{P}_0^{m-1}(x))&\leq e^{-m (H^{i+1}-\varepsilon)}\label{ely32}\\
		\cm{x}{\xi^{i}}(\mathcal{P}_0^{m-1}(x))&\geq e^{-m (H^i+\varepsilon)}\label{ely33}\\
		\mathcal{P}_0^{m-1}(x)\cap\xi^i(x)&\subseteq B(x,e^{-m(\ly{i+1}-2\varepsilon)})\label{ely34}
		\end{align}
		for every $x\in J_1$ and every $m$ sufficiently large. Let us denote $J_1 \cap B(x,e^{-m(\ly{i+1}-2\varepsilon)})$ by $L_m(x)$. By Besicovitch's Density Theorem (\cite[Corollary~2.14]{Ma}) and Egorov's Theorem, there exists a set $J_2\subset J_1$ such that $\lambda(J_2)>1-2\varepsilon$ and
		$$
		\frac{\cm{x}{\xi^{i+1}}(L_m(x))}{\cm{x}{\xi^{i+1}}(B(x,e^{-m(\ly{i+1}-2\varepsilon)}))}\geq\frac{1}{2}.
		$$
		for every $x\in J_2$. Thus, by \eqref{ely31} and \eqref{ely32},
		\begin{equation}\label{eforsharp}
                \begin{split}
                \frac{1}{2}e^{-m(\ly{i+1}-2\varepsilon)(D_{i+1}+\varepsilon)}&\leq\cm{x}{\xi^{i+1}}(L_m(x))\\
                &\leq \#\left\{P\in\mathcal{P}_0^{m-1}:P\cap L_m(x)\neq\emptyset\right\}\max_{\substack{P\in\mathcal{P}_0^{m-1} \\ P\cap L_m(x)\neq\emptyset}}\cm{x}{\xi^{i+1}}(P)\\ &\leq\#\left\{P\in\mathcal{P}_0^{m-1}:P\cap L_m(x)\neq\emptyset\right\}e^{-m (H^{i+1}-\varepsilon)}
		\end{split}
		\end{equation}
		for every $x\in J_2$.
		By \eqref{ely33}, \eqref{ely34}, and \eqref{eforsharp}, we have
		\begin{align*}
		\cm{x}{\xi^{i}}(B(x,2e^{-m(\ly{i+1}-2\varepsilon)})) &\geq\#\left\{P\in\mathcal{P}_0^{m-1}:P\cap L_m(x)\neq\emptyset\right\}\min_{\substack{P\in\mathcal{P}_0^{m-1} \\ P\cap L_m(x)\neq\emptyset}}\cm{x}{\xi^{i}}(P)\\
		&\geq\frac{1}{2}e^{-m(\ly{i+1}-2\varepsilon)(D_{i+1}+\varepsilon)}e^{m (H^{i+1}-\varepsilon)}e^{-m (H^i+\varepsilon)}
		\end{align*}
		Hence, for every $x\in J_2$,
		$$
		\overline{\dim}(\cm{x}{\xi^{i}},x)\leq D_{i+1}+\frac{H^i-H^{i+1}}{\ly{i+1}}+o(\varepsilon).
		$$
		Since $\varepsilon>0$ was arbitrary, the statement is proven.
		
		Thus, we have shown that $\lambda_{x}^{\xi^{d-1}}$ is exact dimensional for $i=d-1$. For the indices $i<d-1$, one can show the claim similarly by repeating the previous argument.
	\end{proof}
	
	\begin{prop}\label{pLY4}
		For $\lambda$-almost every $x=(\widehat{\pi}(\ii),\theta)$ the measure $(\lambda_{x}^{\xi^0})^T_{F_{i}(\theta)^{\bot}}$ is exact dimensional and
		$$
		\dim(\lambda_{x}^{\xi^0})^T_{F_{i}(\theta)^{\bot}}=\sum_{k=0}^{i-1}\frac{H^k-H^{k+1}}{\ly{k+1}}.
		$$
	\end{prop}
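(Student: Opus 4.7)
Let $d_i := \sum_{k=0}^{i-1}\frac{H^k-H^{k+1}}{\ly{k+1}}$ denote the target dimension. Since $(\lambda_x^{\xi^0})^T_{F_i(\theta)^{\bot}}=(\widehat{\mu})^T_{F_i(\theta)^{\bot}}$ depends on $x=(\widehat{\pi}(\ii),\theta)$ only through $\theta$, the goal is to show that this projected measure is exact-dimensional with dimension $d_i$ for $\mu_F$-almost every $\theta$. The plan is to obtain the upper and lower bounds on $\ldimloc$ and $\udimloc$ separately, each as an application of Lemma~\ref{llbmuubtmu}.

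For the upper bound I apply Lemma~\ref{llbmuubtmu} to $\lambda=\widehat{\mu}$ with the subspace $V=F_i(\theta)^{\bot}$. The $\proj_V$-fibers coincide with the $\eta_\theta^i$-classes, so by \eqref{eq:lambdatomu} the associated conditional measures are the $\widehat{\Lambda}$-components of $\lambda_x^{\xi^i}$, which by Proposition~\ref{prop:LY3} are exact-dimensional with common dimension $D_i:=\frac{H^d}{-\log\rho}+\sum_{k=i}^{d-1}\frac{H^k-H^{k+1}}{\ly{k+1}}$. Since $\widehat{\mu}=\lambda_x^{\xi^0}$ is itself exact-dimensional with dimension $D_0$ by the same proposition, Lemma~\ref{llbmuubtmu} gives $D_0=\ldimloc(\widehat{\mu},\cdot)\geq D_i+\ldimloc((\widehat{\mu})^T_{F_i(\theta)^{\bot}},\cdot)$ and the analogous $\udimloc$-inequality. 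Rearranging bounds both local dimensions of $(\widehat{\mu})^T_{F_i(\theta)^{\bot}}$ above by $D_0-D_i=d_i$ almost everywhere.

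For the matching lower bound I induct on $i$, with trivial base case $i=0$ since $F_0(\theta)^{\bot}=\{0\}$. For the inductive step, I identify the conditional measures of $(\widehat{\mu})^T_{F_{i+1}(\theta)^{\bot}}$ along the orthogonal sub-projection $h_\theta\colon F_{i+1}(\theta)^{\bot}\to F_i(\theta)^{\bot}$. Since $\proj_{F_i(\theta)^{\bot}}=h_\theta\circ\proj_{F_{i+1}(\theta)^{\bot}}$, disintegrating $\widehat{\mu}$ along $\eta_\theta^i$ and pushing forward by $\proj_{F_{i+1}(\theta)^{\bot}}$, combined with the uniqueness argument of Lemma~\ref{lem:condtech}, shows that these conditionals are precisely the transversal measures $(\lambda_x^{\xi^i})^T_{F_{i+1}(\theta)^{\bot}}$ studied in Proposition~\ref{pLY2}, hence exact-dimensional with dimension $\alpha_i:=\frac{H^i-H^{i+1}}{\ly{i+1}}$. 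Applying Lemma~\ref{llbmuubtmu} to $\lambda=(\widehat{\mu})^T_{F_{i+1}(\theta)^{\bot}}$ with $V=F_i(\theta)^{\bot}\subset F_{i+1}(\theta)^{\bot}$ and invoking the inductive hypothesis yields
$$
\ldimloc\bigl((\widehat{\mu})^T_{F_{i+1}(\theta)^{\bot}},\cdot\bigr)\geq\alpha_i+\ldimloc\bigl((\widehat{\mu})^T_{F_i(\theta)^{\bot}},\cdot\bigr)\geq\alpha_i+d_i=d_{i+1},
$$
and likewise for $\udimloc$, closing the induction.

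The main obstacle is the identification of the conditional measures in the inductive step. One must verify that the fiberwise pushforwards $(\widehat{\mu}_y^{\eta_\theta^i})^T_{F_{i+1}(\theta)^{\bot}}$ assemble into the genuine Rokhlin disintegration of $(\widehat{\mu})^T_{F_{i+1}(\theta)^{\bot}}$ along fibers of $h_\theta$. The key observation is that $\proj_{F_{i+1}(\theta)^{\bot}}$ maps each affine translate of $F_i(\theta)$ onto a single $h_\theta$-fiber in $F_{i+1}(\theta)^{\bot}$, so these pushforwards are automatically supported on the correct fibers; the identity $(\widehat{\mu})^T_{F_{i+1}(\theta)^{\bot}}=\int(\widehat{\mu}_y^{\eta_\theta^i})^T_{F_{i+1}(\theta)^{\bot}}\dd(\widehat{\mu})^T_{F_i(\theta)^{\bot}}(y)$, obtained by pushing the $\eta_\theta^i$-disintegration of $\widehat{\mu}$ forward by $\proj_{F_{i+1}(\theta)^{\bot}}$, together with uniqueness of conditional measures, then completes the identification, in the same spirit as the derivation of \eqref{eq:lambdatomu}.
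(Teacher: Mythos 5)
Your proof is correct and follows essentially the same route as the paper: both establish the upper bound by disintegrating $\widehat{\mu}=\lambda_x^{\xi^0}$ along $\xi^i$ and invoking Lemma~\ref{llbmuubtmu} with Proposition~\ref{prop:LY3}, and both obtain the lower bound inductively by applying Lemma~\ref{llbmuubtmu} to the projected measure $(\widehat{\mu})^T_{F_{j}(\theta)^{\bot}}$, identifying the fiber conditionals with the transversal measures of Proposition~\ref{pLY2} via the chain $F_{j}(\theta)^{\bot}\supset F_{j-1}(\theta)^{\bot}$. The only differences are cosmetic (base case $i=0$ versus $i=1$, step written $i\to i+1$ versus $i-1\to i$), and your closing remark about verifying the Rokhlin disintegration identity for $(\widehat{\mu})^T_{F_{j}(\theta)^{\bot}}$ is precisely the step the paper invokes implicitly.
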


\begin{proof}
	We prove the statement by induction. For $i=1$ the statement follows from Proposition~\ref{pLY2}. Let $i>1$ and let us assume that the proposition holds for every $j<i$.
	By the uniqueness of the conditional measures and \eqref{econdcond}, we have
	$$
	\lambda_x^{\xi^0}=\int\lambda^{\xi^i}_{y}\dd\mu_{x}^{\xi^0}(y)
	$$
	for $\lambda$-almost every $x$.
	Applying Lemma~\ref{llbmuubtmu} and Proposition~\ref{prop:LY3}, we get
	\begin{align*}
		\udimloc(\lambda_{x}^{\xi^0},x) &=\frac{H^{d}}{\log\rho}+\sum_{j=0}^{d-1}\frac{H^j-H^{j+1}}{\ly{j+1}}\\
		\ &\geq\udimloc(\lambda_{x}^{\xi^i},x)+\udimloc((\lambda_{x}^{\xi^0})^T_{(F_i(\theta))^{\bot}},x)\\
		\ &=\frac{H^{d}}{\log\rho}+\sum_{j=i}^{d-1}\frac{H^j-H^{j+1}}{\ly{j+1}}+\udimloc((\lambda_{x}^{\xi^0})^T_{(F_i(\theta))^{\bot}},x).
	\end{align*}
	Thus,
	\begin{equation*}
		\sum_{j=0}^{i-1}\frac{H^j-H^{j+1}}{\ly{j+1}}\geq\udimloc((\lambda_{x}^{\xi^0})^T_{(F_i(\theta))^{\bot}},x)
	\end{equation*}
	for $\lambda$-almost all $x$.
	On the other hand,
	$$	(\lambda_{x}^{\xi^0})^T_{(F_i(\theta))^{\bot}}=\int(\lambda_{y}^{\xi^{i-1}})^T_{(F_i(\theta))^{\bot}}\dd\lambda_{x}^{\xi^0}(y)
	$$
	and
	$$	((\lambda_{x}^{\xi^0})^T_{(F_i(\theta))^{\bot}})_{(F_{i-1}(\theta))^{\bot}}^T=(\lambda_{x}^{\xi^0})^T_{(F_{i-1}(\theta))^{\bot}}.
	$$
	Thus, applying Lemma~\ref{llbmuubtmu} for $(\lambda^{\xi^0}_{x})^T_{(F_i(\theta))^{\bot}}$, we get, by Proposition~\ref{pLY2} and the induction assumption, that
	\begin{align*}
		\ldimloc((\lambda_{x}^{\xi^0})^T_{(F_i(\theta))^{\bot}},x)&\geq\ldimloc((\lambda_{x}^{\xi^{i-1}})^T_{(F_i(\theta))^{\bot}},x)+\ldimloc(((\lambda_{x}^{\xi^0})^T_{(F_{i}(\theta))^{\bot}})_{(F_{i-1}(\theta))^{\bot}}^T,x)\\
		\
		&=\ldimloc((\lambda_{x}^{\xi^{i-1}})^T_{(F_i(\theta))^{\bot}},x)+\ldimloc((\lambda_{x}^{\xi^0})^T_{(F_{i-1}(\theta))^{\bot}},x)\\
		\
		&=\frac{H^{i-1}-H^{i}}{\ly{i}}+\sum_{k=0}^{i-2}\frac{H^k-H^{k+1}}{\ly{k+1}}
	\end{align*}
	for $\lambda$-almost every $x$.
\end{proof}

\begin{proof}[Proof of Theorem~\ref{texact2}]
	Observe that $\mu=\widehat{\mu}^T_{F_d(\theta)^{\bot}}$.
	By \eqref{econdcond} and \eqref{eq:lambdatomu}, we have
        $$
        \widehat{\mu}=\lambda_{x}^{\xi^0}=\int\lambda^{\xi^d}_{y}\dd\lambda_{x}^{\xi^0}(y)
	$$
	for $\lambda$-almost every $x$.
	Applying Proposition~\ref{pLY4} in the case $i=d$, we get
	\begin{equation*}
	\dim\mu=\sum_{k=0}^{d-1}\frac{H^k-H^{k+1}}{\ly{k+1}}.
	\end{equation*}
	By simple algebra, we see that
	\begin{align*}
		\frac{H^0-H^d}{\ly{d}}+\sum_{i=1}^{d-1}\left(\frac{\ly{i+1}-\ly{i}}{\ly{d}}\right)\sum_{k=0}^{i-1}\frac{H^k-H^{k+1}}{\ly{k+1}}
		&=\frac{H^0-H^d}{\ly{d}}+\sum_{k=0}^{d-2}\sum_{i=k+1}^{d-1}\left(\frac{\ly{i+1}-\ly{i}}{\ly{d}}\right)\frac{H^k-H^{k+1}}{\ly{k+1}}\\
		&=\frac{H^0-H^d}{\ly{d}}+\sum_{k=0}^{d-2}\left(\frac{\ly{d}-\ly{k+1}}{\ly{d}}\right)\frac{H^k-H^{k+1}}{\ly{k+1}}\\
		&=\frac{H^0-H^d}{\ly{d}}+\sum_{k=0}^{d-2}\frac{H^k-H^{k+1}}{\ly{k+1}}-\frac{H^0-H^{d-1}}{\ly{d}}\\
		&=\sum_{j=0}^{d-1}\frac{H^j-H^{j+1}}{\ly{j+1}}.
	\end{align*}
	By Proposition~\ref{pLY4},  $\mu^T_{V_i^{\bot}}=(\cm{x}{\xi^0})^T_{\left(F_{i}(\theta)\right)^{\bot}}$  is exact dimensional for $\mu_F$-almost every $\theta$ and every $i$. Moreover,
	\begin{align*}
		\dim\mu&=\frac{H^0-H^d}{\ly{d}}+\sum_{i=1}^{d-1}\left(\frac{\ly{i+1}-\ly{i}}{\ly{d}}\right)\sum_{k=0}^{i-1}\frac{H^k-H^{k+1}}{\ly{k+1}}\\
		&=\frac{H^0-H^d}{\ly{d}}+\sum_{i=1}^{d-1}\left(\frac{\ly{i+1}-\ly{i}}{\ly{d}}\right)\dim(\cm{x}{\xi^0})^T_{\left(F_{i}(\theta)\right)^{\bot}}\\
		&=\frac{H^0-H^d}{\ly{d}}+\sum_{i=1}^{d-1}\left(\frac{\ly{i+1}-\ly{i}}{\ly{d}}\right)\dim\mu^T_{V_i^{\bot}}.
	\end{align*}
	Finally, we see that
	$$
	H^0=-\int\log\lambda_{x}^{\xi^0}(\mathcal{P}(x))\dd\lambda(x)=-\int\log\widehat{\mu}(\widehat{f}_{i_0}(\widehat{\Lambda}))\dd\nu(\ii)\dd\mu_F(\theta)=h_{\nu}
	$$
	and, by \eqref{eq:lambdatomu},
	$$
	H^d=-\int\log\lambda_{x}^{\xi^d}(\mathcal{P}(x))\dd\lambda(x)=-\int\log\widehat{\mu}^{\eta_{\theta}^d}_{\widehat{\pi}(\ii)}\left(\mathcal{P}((\widehat{\pi}(\ii),\theta))\right)\dd\nu(\ii)\dd\mu_F(\theta).
	$$
	Since, by \cite[Theorem~2.2]{Si},
	\begin{align*}
	\widehat{\mu}_{\widehat{\pi}(\ii)}^{\eta_{\theta}^d}(\mathcal{P}(\widehat{\pi}(\ii),\theta))&=\lim_{\delta\rightarrow0+}\frac{\nu\left(\widehat{\pi}^{-1}(\mathcal{P}(\widehat{\pi}(\ii),\theta)\cap \pi^{-1}(B(\pi(\ii),\delta)))\right)}{\nu(\pi^{-1}B(\pi(\ii),\delta))}\\
	&=\lim_{\delta\rightarrow0+}\frac{\nu\left([i_{0}]\cap \pi^{-1}(B(\pi(\ii),\delta)))\right)}{\nu(\pi^{-1}B(\pi(\ii),\delta))}=\nu_{\ii}^{\pi^{-1}}([i_{0}])
	\end{align*}
	for $\nu$-almost every $\ii$, we have finished the proof.
\end{proof}

\section{Totally dominated splitting and invertible system}\label{sec:Liftd}

Now we turn to the proof of Theorem~\ref{texactd}. In this section, we define another dynamical system, which is invertible and hyperbolic. Basically, this can be defined by relying on the TDS condition. In the first part of the section, we will list some properties of matrices satisfying the TDS condition, and in the second part, we give the basic definition of the mentioned dynamical system and its invariant partitions. We remark that the partitions are different from the ones defined in Section~\ref{sec:Lift}. Nevertheless, the essence of the proof of Theorem~\ref{texactd} is the same as that of Theorem \ref{texact2}. The main difference can be seen by comparing Propositions \ref{pLY2} and \ref{pLY2d}.

Let us first introduce the two-sided symbolic dynamics. Let  $\Sigma^{\pm}=\left\{1,\dots,N\right\}^{\mathbb{Z}}$ be the space of two-sided infinite words and let $\Sigma^-=\left\{1,\dots,N\right\}^{\mathbb{Z}\setminus\mathbb{N}}$ be the set of left-sided infinite words. Recall that the set of right-sided infinite words is $\Sigma$. Denote the left-shift operator on $\Sigma^{\pm}$ by $\sigma$ and the right-shift operator on $\Sigma^{\pm}$ and $\Sigma^-$ by $\sigma_-$. Thus, $\sigma$ and $\sigma_-$ are invertible on $\Sigma^{\pm}$ and $\sigma^{-1}=\sigma_-$.
	
For a two-sided infinite word $\ii\in\Sigma^{\pm}$ we denote by $\ii|_n^k$ the elements of $\ii$ between $n$ and $k$, i.e.\ $\ii|_n^k=(i_n,\dots,i_k)$. Let us also define the cylinder sets on $\Sigma^{\pm}$ by $$[\ii|_n^k]=\{\jj\in\Sigma^{\pm}:\jj|_n^k=\ii|^k_n\}.$$
For a word $\ii=(\cdots i_{-2}i_{-1}i_0i_1\cdots)\in\Sigma^{\pm}$, denote by $\ii_+=(i_0i_1\cdots) \in \Sigma$ the right-hand side and by $\ii_-=(\cdots i_{-2}i_{-1}) \in \Sigma^-$ the left-hand side of $\ii$. To avoid any confusion, we write $\ii_+$ for elements in $\Sigma$ and $\ii_-$ for elements in $\Sigma^-$. We define the projection from $\Sigma^{\pm}$ onto $\Sigma$ by $p_+\colon\Sigma^{\pm}\to\Sigma$, and similarly, the projection from $\Sigma^{\pm}$ onto $\Sigma^-$ by $p_-\colon\Sigma^{\pm}\to\Sigma^-$. Thus, $p_+(\ii)=\ii_+$ and $p_-(\ii)=\ii_-$.

\subsection{Totally dominated splitting}
In this section, we collect the results of Bochi and Guermelon \cite{BG} on dominated splitting. Let $\Alpha=\left\{A_1, \dots, A_N\right\}$ be a finite set of contractive non-singular $d\times d$ matrices. We define a mapping $A\colon\Sigma^{\pm}\to\mathcal{A}$ by setting $A(\ii)=A_{i_0}$. For $n\geq1$ we let $A^{(n)}(\ii)=A(\sigma^{n-1}\ii)\cdots A(\ii)$, for $n\leq-1$ we let $A^{(n)}(\ii)=A^{(-n)}(\sigma^{n}\ii)$, and for $n=0$ we let $A^{(0)}(\ii)=\mathrm{Id}$.
Recall that $\Alpha$ has \textit{dominated splitting} of index $i \in \{1,\ldots,d-1\}$ if there exist constants $C>0$ and $\tau<1$ such that
$$
\frac{\alpha_{i+1}(A^{(n)}(\ii))}{\alpha_{i}(A^{(n)}(\ii))}<C\tau^n
$$
for every $\ii\in\Sigma^\pm$ and $n\in\mathbb{Z}$. The following theorem is a refinement of Proposition \ref{thm:BGprop}.

\begin{theorem}[Bochi and Gourmelon \cite{BG}]\label{tdomsplitindex}
  Let $\Alpha=\left\{A_1, \dots, A_N\right\}$ be a finite set of contractive non-singular $d\times d$ matrices with dominated splitting of index $i \in \{ 1,\ldots,d-1 \}$. Then there exists a constant $C\geq1$ and for every $\ii\in\Sigma^{\pm}$ there are subspaces $F_{\ii}^i$ and $E_{\ii}^i$ such that
  \begin{itemize}
          \item[(1)] $\dim F_{\ii}^i=d-i$, $\dim E_{\ii}^i=i$, and $F_{\ii}^i\oplus E_{\ii}^i=\R^d$,
          \item[(2)] $A(\ii)F_{\ii}^i=F_{\sigma\ii}^i$ and $A(\ii)E_{\ii}^i=E_{\sigma\ii}^i$,
          \item[(3)] $F_{\ii}^i$ depends only on $\ii_+$ and $E_{\ii}^i$ depends only on $\ii_-$,
          \item[(4)] $\|A^{(n)}(\ii)|F_{\ii}^i\|\leq C\alpha_{i+1}(A^{(n)}(\ii))$ and $\mathfrak{m}(A^{(n)}(\ii)|E_{\ii}^i)\geq C^{-1}\alpha_{i}(A^{(n)}(\ii))$ for every $n\geq1$.
  \end{itemize}
  Moreover the angle between $F_{\ii}^i$ and $E_{\ii}^i$ is bounded below uniformly for every $\ii\in\Sigma^{\pm}$.
\end{theorem}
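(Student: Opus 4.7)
The plan is to construct the invariant subbundles via cone fields and a limiting argument on the Grassmannian, using Theorem~\ref{tnsconddomsplit} to convert the dominated splitting hypothesis into strict invariance of a pair of complementary open cones. First I would invoke Theorem~\ref{tnsconddomsplit} to obtain an open set $C\subsetneq G(i,d)$ with $A_j\overline{C}\subset C$ for every $j$, together with a transverse $(d-i)$-plane. Dualising the argument (applying the same criterion to the adjoint cocycle, or equivalently noting that dominated splitting of index $i$ is symmetric under transposition) yields a complementary open set $C'\subsetneq G(d-i,d)$ with $A_j^{-1}\overline{C'}\subset C'$, whose elements are transverse to every element of $\overline{C}$.

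Given these invariant cones, the natural candidates are
\[
E_\ii^i=\bigcap_{n\ge1}A(\sigma^{-1}\ii)\cdots A(\sigma^{-n}\ii)\,\overline{C}\quad\text{and}\quad F_\ii^i=\bigcap_{n\ge0}(A^{(n)}(\ii))^{-1}\,\overline{C'}.
\]
Each is a nested intersection of non-empty compact subsets of the corresponding Grassmannian, and the crucial quantitative input is that the singular-value gap $\alpha_{i+1}(A^{(n)}(\ii))/\alpha_{i}(A^{(n)}(\ii))\le C\tau^n$ translates, after equipping $G(i,d)$ and $G(d-i,d)$ with a metric measuring principal angles, into an exponential contraction of the diameters of $A(\sigma^{-1}\ii)\cdots A(\sigma^{-n}\ii)\overline{C}$ and $(A^{(n)}(\ii))^{-1}\overline{C'}$ at a rate governed by $\tau$. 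Hence both intersections collapse to single subspaces of the prescribed dimensions $i$ and $d-i$; inspection of the defining formulas shows that $E_\ii^i$ only involves the coordinates $\ii_-$ whereas $F_\ii^i$ only involves $\ii_+$, giving (1) and (3). Property (2) is a routine reindexing: applying $A(\ii)$ to the defining intersection of $E_\ii^i$ shifts every index by one and recovers the intersection defining $E_{\sigma\ii}^i$, and symmetrically for $F_\ii^i$.

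The complementarity $F_\ii^i\oplus E_\ii^i=\R^d$ and the uniform lower bound on the angle follow from $E_\ii^i\in\overline{C}$, $F_\ii^i\in\overline{C'}$, and the fact that $\overline{C}$ and $\overline{C'}$ consist of mutually transverse pairs with a positive distance enforced by compactness. For the norm estimates in (4), the strict invariance puts $F_\ii^i$ inside $(A^{(n)}(\ii))^{-1}\overline{C'}$ for every $n$, and by the exponential Grassmannian contraction this set is confined to a small neighbourhood of the $(d-i)$-dimensional right singular subspace of $A^{(n)}(\ii)$ corresponding to $\alpha_{i+1},\dots,\alpha_d$. A standard computation relating principal angles to operator norms then yields $\|A^{(n)}(\ii)|F_\ii^i\|\le C\alpha_{i+1}(A^{(n)}(\ii))$ with $C$ independent of $\ii$ and $n$; the estimate on $\mathfrak{m}(A^{(n)}(\ii)|E_\ii^i)$ is obtained dually, by applying the same scheme to the transposed cocycle acting on $G(i,d)$.

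The main obstacle is the quantitative Grassmannian contraction alluded to above: one needs to promote the purely topological strict invariance produced by Theorem~\ref{tnsconddomsplit} to a uniform exponential contraction whose rate is controlled explicitly by the singular-value ratio $\tau$, and simultaneously extract the sharper alignment of $F_\ii^i$ with the extremal singular subspace required for (4). This quantitative refinement is the technical core of Bochi and Gourmelon's contribution; once it is in place, the uniformity in $\ii$ of the constant $C$ as well as all remaining items in the theorem are formal consequences of the intersection construction and the compactness of $\Alpha$.
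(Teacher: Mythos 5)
The paper does not actually prove Theorem~\ref{tdomsplitindex}: its proof is the single sentence ``The claim is a direct consequence of \cite[Theorem~A, Theorem~B, and Lemma~1]{BG},'' so the result is imported from Bochi and Gourmelon as a black box. Your proposal instead tries to reconstruct that theorem via invariant cone fields: obtain strictly invariant open sets $C\subset G(i,d)$ and $C'\subset G(d-i,d)$ from Theorem~\ref{tnsconddomsplit} applied to $\mathcal{A}$ and to the inverse (or transposed) cocycle, define $E_{\ii}^i$ and $F_{\ii}^i$ as nested intersections of cone images, and read off items (1)--(3), the direct-sum decomposition, and the uniform angle bound from the cone geometry, with (4) coming from an exponential contraction on the Grassmannian. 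That is indeed the correct skeleton of the argument in \cite{BG}, and your reduction of (1)--(3) and the angle estimate to the cone construction is sound, including the observation that $E_{\ii}^i$ sees only $\ii_-$ and $F_{\ii}^i$ only $\ii_+$.

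However, the proposal leaves the genuinely hard steps unproved, and they are not formalities. First, the passage from the singular-value gap $\alpha_{i+1}(A^{(n)}(\ii))/\alpha_i(A^{(n)}(\ii))\le C\tau^n$ to a \emph{uniform} exponential contraction of cone diameters in a Grassmannian metric is precisely the content of \cite[Theorems~A and~B]{BG}; without it the nested intersections need not collapse to single subspaces, much less at a controlled rate. Second, item (4) requires not merely that $F_{\ii}^i$ be the limit of $(A^{(n)}(\ii))^{-1}\overline{C'}$, but that it lie within a uniformly bounded principal-angle neighbourhood of the $(d-i)$-dimensional singular subspace of $A^{(n)}(\ii)$ associated to $\alpha_{i+1},\dots,\alpha_d$ for every $n$; that uniform alignment is what makes $C$ in (4) independent of $\ii$ and $n$, and you assert but do not derive it. You also take for granted that $C'$ can be chosen so that every element of $\overline{C'}$ is uniformly transverse to every element of $\overline{C}$, which needs a (short) compactness argument. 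You honestly flag all of this as ``the technical core of Bochi and Gourmelon's contribution,'' but that means what you have is an outline of their proof rather than a proof. For this paper's purposes the citation is the right move; a self-contained version would have to supply the contraction lemma in full.
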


\begin{proof}
  The claim is a direct consequence of \cite[Theorem~A, Theorem~B, and Lemma~1]{BG}.
\end{proof}

Recall that $\Alpha$ satisfies the \emph{totally dominated splitting} (TDS) if for every $i\in \{ 1,\dots,d-1 \}$ either $\mathcal{A}$ has dominated splitting of index $i$ or there exists a constant $C\geq1$ such that
$$
C^{-1}\leq\frac{\alpha_{i+1}(A^{(n)}(\ii))}{\alpha_{i}(A^{(n)}(\ii))}
$$
for every $\jj \in \Sigma^\pm$ and $n \in \mathbb{Z}$. Recall also that $\mathcal{D}=\mathcal{D}(\mathcal{A}) = \{ i \in \{ 1,\ldots,d-1 \} : \mathcal{A}$ has dominated splitting of index  $i \}$.

\begin{lemma}\label{lTDSmanif}
  Let $\Alpha=\left\{A_1, \dots, A_N\right\}$ be a finite set of contractive non-singular $d\times d$ matrices satisfying the TDS. If the elements of $\mathcal{D}(\mathcal{A})$ are $1\leq i_1<\cdots<i_k\leq d-1$, then
  $$
  F^{i_k}_{\ii}\subset F^{i_{k-1}}_{\ii}\subset\cdots\subset F^{i_{1}}_{\ii}\quad\text{and}\quad E^{i_1}_{\ii}\subset E^{i_2}_{\ii}\subset\cdots\subset E^{i_k}_{\ii}.
  $$	
  Furthermore, if $i_0=0$ and $i_{k+1}=d$, then there exists a constant $C\geq1$ such that for every $j\in\{1,\ldots,k+1\}$ and $\ii\in\Sigma^{\pm}$ there are subspaces $e^{i_j}_{\ii}$ for which
  \begin{itemize}
          \item[(1)] $\dim e^{i_j}_{\ii}=i_{j}-i_{j-1}$,
          \item[(2)] $A(\ii)e^{i_j}_{\ii}=e^{i_j}_{\sigma\ii}$
          \item[(3)] $C^{-1}\alpha_{i_j}(A^{(n)}(\ii))\leq\mathfrak{m}(A^{(n)}(\ii)|e^{i_j}_{\ii})\leq\|A^{(n)}(\ii)|e^{i_j}_{\ii}\|\leq C\alpha_{i_j}(A^{(n)}(\ii))$ for every $n\geq1$.
  \end{itemize}
  Moreover, the angles between the subspaces $e^i_{\ii}$, $\ii\in\Sigma^{\pm}$, are uniformly bounded below.
\end{lemma}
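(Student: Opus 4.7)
The plan is to build the subspaces $e^{i_j}_\ii$ as intersections of the dominated splittings provided by Theorem~\ref{tdomsplitindex}, and to exploit the TDS hypothesis to close the gap in the singular value estimates between consecutive dominated indices.

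I first verify the two filtrations. The chain $F^{i_k}_\ii\subset\cdots\subset F^{i_1}_\ii$ is already part of Proposition~\ref{thm:BGprop} in the one-sided setting, and extends to $\Sigma^\pm$ because, by Theorem~\ref{tdomsplitindex}~(3), $F^i_\ii$ depends only on $\ii_+$. The dual chain $E^{i_1}_\ii\subset\cdots\subset E^{i_k}_\ii$ is then a standard consequence of uniqueness of dominated splittings: applying the same result to the inverse cocycle identifies $E^i_\ii$ with the strong-stable subspace at index $d-i$ for the inverse, and the corresponding filtration translates back.

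With the filtrations in hand, and using the conventions $E^{i_0}_\ii=F^{i_{k+1}}_\ii=\{0\}$ and $F^{i_0}_\ii=E^{i_{k+1}}_\ii=\R^d$, I would set
\begin{equation*}
  e^{i_j}_\ii = E^{i_j}_\ii \cap F^{i_{j-1}}_\ii,\qquad j\in\{1,\ldots,k+1\}.
\end{equation*}
The inclusion $E^{i_{j-1}}_\ii\subset E^{i_j}_\ii$ together with $E^{i_{j-1}}_\ii\oplus F^{i_{j-1}}_\ii=\R^d$ yields $E^{i_j}_\ii=E^{i_{j-1}}_\ii\oplus e^{i_j}_\ii$ by elementary linear algebra, so $\dim e^{i_j}_\ii=i_j-i_{j-1}$; the invariance $A(\ii)e^{i_j}_\ii=e^{i_j}_{\sigma\ii}$ follows from invariance of the ingredients and the invertibility of $A(\ii)$. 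The boundary cases correctly recover $e^{i_1}_\ii=E^{i_1}_\ii$ and $e^{d}_\ii=F^{i_k}_\ii$.

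The main technical step is the sandwich estimate~(3). The lower bound $\mathfrak{m}(A^{(n)}(\ii)|e^{i_j}_\ii)\geq C^{-1}\alpha_{i_j}(A^{(n)}(\ii))$ is immediate from $e^{i_j}_\ii\subset E^{i_j}_\ii$ and Theorem~\ref{tdomsplitindex}~(4). From $e^{i_j}_\ii\subset F^{i_{j-1}}_\ii$ one only obtains an upper bound in terms of $\alpha_{i_{j-1}+1}(A^{(n)}(\ii))$, so the real task is to replace $\alpha_{i_{j-1}+1}$ by $\alpha_{i_j}$. This is precisely the role of the TDS assumption: every index $i\in\{i_{j-1}+1,\ldots,i_j-1\}$ lies outside $\mathcal{D}(\mathcal{A})$, so the uniform bound $\alpha_{i+1}(A^{(n)}(\ii))/\alpha_i(A^{(n)}(\ii))\geq C^{-1}$ applies, and telescoping over the block produces $\alpha_{i_{j-1}+1}(A^{(n)}(\ii))\leq C^{i_j-i_{j-1}-1}\alpha_{i_j}(A^{(n)}(\ii))$. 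Finally, for the uniform angle bound between $e^{i_j}_\ii$ and $e^{i_l}_\ii$ with $j<l$, observe that $e^{i_j}_\ii\subset E^{i_j}_\ii\subset E^{i_{l-1}}_\ii$ while $e^{i_l}_\ii\subset F^{i_{l-1}}_\ii$, so the bound descends from the uniform positive angle between $E^{i_{l-1}}_\ii$ and $F^{i_{l-1}}_\ii$ given by Theorem~\ref{tdomsplitindex}. The telescoping via the TDS assumption is the only genuinely non-formal step; everything else is linear algebra layered on top of Theorem~\ref{tdomsplitindex}.
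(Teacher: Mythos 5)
Your proof is correct and takes essentially the same approach as the paper: both define $e^{i_j}_\ii = E^{i_j}_\ii \cap F^{i_{j-1}}_\ii$ (with the same boundary conventions), derive (1) and (2) from invariance of the ingredients, and obtain (3) by sandwiching via Theorem~\ref{tdomsplitindex}(4) together with the non-dominated-index part of the TDS hypothesis to pass from $\alpha_{i_{j-1}+1}$ to $\alpha_{i_j}$. You simply spell out a few steps the paper leaves implicit, namely the telescoping over the non-dominated indices, the modular-law dimension count, and the angle bound.
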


\begin{proof}
  The subspaces $F_{\ii}^{i_j}$ can be defined as the limit of the eigenspace of $\left((A^{n}(\ii))^TA^{(n)}(\ii)\right)^{1/2}$ associated to the eigenvalues $\alpha_{i_j+1}(A^{(n)}(\ii)), \dots,\alpha_d(A^{(n)}(\ii))$; see \cite[Claim at p.\ 225]{BG}. This proves the first assertion of the lemma.

  Let us define the subspaces $ e^{i_j}_{\ii}$ as follows:
  $$
  e^{i_1}_{\ii}=E^{i_1}_{\ii},\quad e^{i_j}_{\ii}=E^{i_j}_{\ii}\cap F^{i_{j-1}}_{\ii}\text{ for all $j \in \{2\ldots, k\}$,} \quad e^{i_{k+1}}_{\ii}=F^{i_k}_{\ii}.
  $$
  The properties (1) and (2) follow now from Theorem~\ref{tdomsplitindex}. On the other hand, since
  \begin{align*}
    \mathfrak{m}(A^{(n)}(\ii)|e^{i_j}_{\ii})&\geq\mathfrak{m}(A^{(n)}(\ii)|E^{i_j}_{\ii})\geq C^{-1}\alpha_{i_j}(A^{(n)}(\ii)), \\
    \|A^{(n)}(\ii)|e^{i_j}_{\ii}\|&\leq\|A^{(n)}(\ii)|F_{\ii}^{i_{j-1}}\|\leq C\alpha_{i_{j-1}+1}(A^{(n)}(\ii))\leq C^d\alpha_{i_{j}}(A^{(n)}(\ii))
  \end{align*}
  also the property (3) holds.
  Finally, we remark that the cases $i_1$ and $i_{k+1}$ are straightforward.
\end{proof}

\subsection{Induced invertible system}
In this section, we introduce a dynamical system, induced by iterated function systems of affinities, similarly to \cite{Ba}. We assume that the IFS satisfies the SSC. The overlapping case is then treated as in Section \ref{sec:Lift} -- this will be done in Section \ref{sec:proofLYd}. Since the invariant strong stable manifolds under the TDS can be characterized in an explicit way and depend continuously, we work with an invertible dynamical system, which is not the case for general finite set of matrices.

Let $\Phi=\left\{f_i(\xv)=A_i\xv+\tv_i\right\}_{i=1}^N$ be an IFS on $\R^d$, where each $\underline{t}_i\in\R^d$ and $\mathcal{A}=\left\{A_1,\dots,A_N\right\}$ is a finite family of contractive non-singular $d\times d$ matrices that satisfies the TDS. We denote the composition of functions of $\Phi$ for a finite length word $\underline{i}=(i_1,\dots,i_n)$ by $f_{\underline{i}}=f_{i_1}\circ\cdots\circ f_{i_n}$ and the self-affine set associated to $\Phi$ by $\Lambda$.
Let us define a dynamical system $F$ acting on $\Lambda\times\Sigma$ by setting
\begin{equation*} 
F(\xv,\ii_+)=(f_{i_0}(\xv),\sigma\ii_+).
\end{equation*}
For simplicity, we often write $\y=(\xv,\ii_+)$. Observe that $F$ is invertible because $\Phi$ satisfies the SSC. We note that here the inverse $F^{-1}$ plays the role of the mapping $G$ in the planar case.

Define $\pi\colon\Sigma^-\to\Lambda$ by
\begin{equation*}
\pi(\dots,i_{-2},i_{-1})=\lim_{n\rightarrow\infty}f_{i_{-1}}\circ \cdots\circ f_{i_{-n}}(0)=\sum_{n=1}^{\infty}A_{i_{-1}}\cdots A_{i_{-n+1}}\tv_{i_{-n}}.
\end{equation*}
It is easy to see that $F$ is conjugate to $\sigma$ by the projection $\Pi\colon\Sigma^{\pm}\to\Lambda\times\Sigma$, $\Pi(\ii)=(\pi(\ii_-),\ii_+)$. That is,
$\Pi\circ\sigma=F\circ\Pi$. If $\nu$ is a $\sigma$-invariant and ergodic measure on $\Sigma^{\pm}$, then the measure $\mu=(\Pi)_*\nu=\nu\circ\Pi^{-1}$ is $F$-invariant and ergodic.

We define a sequence of dynamically invariant foliations on $\Lambda\times\Sigma$ with respect to the stable directions. For any $i\in\mathcal{D}$ let $\xi^i$ be the foliation with respect to $F^i_{\ii}$. Let us denote the hyperplane of $\R^d$ containing $\xv$ parallel to $F^i_{\ii_+}$ by $P_i(\xv,\ii_+)$. That is, for any $(\xv,\ii_+)\in\Lambda\times\Sigma$ we let
\begin{equation*}
\xi^i(\xv,\ii_+)=\left\{(\yv,\jj_+)\in\Lambda\times\Sigma:\ii_+=\jj_+\text{ and }\xv-\yv\in F^i_{\ii_+}\right\}=(\Lambda\times\Sigma)\cap(P_i(\xv,\ii_+)\times\left\{\ii_+\right\}),
\end{equation*}
where $F^i_{\ii}$ is defined in Theorem~\ref{tdomsplitindex}. We define the stable partition to be
\begin{equation*}
\xi^0(\xv,\ii_+)=\Lambda\times\left\{\ii_+\right\}.
\end{equation*}
For simplicity, we introduce the convention $F_{\ii_+}^0=\R^d$. It is easy to see that
$$
\xi^{i_k}>\xi^{i_{k-1}}>\cdots\xi^{i_1}>\xi^0.
$$
By \eqref{econdcond}, for any $i,j\in\mathcal{D}$ with $i<j$ we have
\begin{equation*}
\mu^{\xi^i}_{\y}=\int\mu^{\xi^j}_{\z}\dd\mu^{\xi^i}_{\y}(\z).
\end{equation*}
Moreover, by the invariance of the subspaces $F^i_{\ii}$ and the contractivity of the functions $f_i$
$$
F^{-1}\xi^i>\xi^i
$$
for every $i\in\mathcal{D}\cup\left\{0\right\}$.
We recall that $(F^{-1}\xi^i)(\y)=F(\xi^i(F^{-1}(\y)))$.

Let us define the conditional entropy $H^i$ of $F^{-1}\xi^i$ with respect to $\xi^i$ in the usual way by setting
\begin{equation*} 
H^i=H(F^{-1}\xi^i|\xi^i)=-\int\log\cm{\y}{\xi^i}((F^{-1}\xi^i)(\y))\dd\mu(\y).
\end{equation*}
for all $i\in\mathcal{D}\cup\{0\}$. If $i\notin\mathcal{D}\cup\left\{0\right\}$, then we set $H^i=H^j$, where $j=\max\left(\left(\mathcal{D}\cup\left\{0\right\}\right)\cap\left\{n\leq i\right\}\right)$.
Thus,
$$
H^{d-1}\leq H^{d-2}\leq\cdots\leq H^1\leq H^0
$$
and $H^{i}<H^{i-1}$ if and only if $i\in\mathcal{D}$.

For the partitions $\xi^i$, $i\in\mathcal{D}$, the conditional measures can be defined by weak-* convergence. Let us denote the $i$th transversal ball with radius $\delta>0$ centered at $\y=(\xv,\ii_+)$ by $B^T_{i}(\y,\delta)$, i.e.
\begin{equation}\label{dtransball}
B^T_i(\xv,\ii_+,\delta)=\left\{(\yv,\jj_+)\in\Lambda\times\Sigma^+:\ii_+=\jj_+,\text{ }\mathrm{dist}(P_i(\xv,\ii_+),P_i(\yv,\jj_+))<\delta\right\},
\end{equation}
where $\mathrm{dist}$ denotes the usual Euclidean distance. If $i,j$ be two consecutive elements of $\mathcal{D}\cup\left\{0\right\}$ such that $i<j$ then
\begin{equation}\label{econddefd}
\mu^{\xi^0}_{(\xv,\ii_+)}=\lim_{n\rightarrow\infty}\frac{\left.\mu\right|_{\Lambda\times[\ii_+|_{0}^n]}}{\mu(\Lambda\times[\ii_+|_{0}^n])}\quad\text{and}\quad\mu_{\y}^{\xi^j}=\lim_{\delta\to0+}\frac{\left.\mu^{\xi^i}_{\y}\right|_{B^T_j(\y,\delta)}}{\mu^{\xi^i}_{\y}(B^T_j(\y,\delta))}
\end{equation}
provided that the limit exists; see \cite{Si}.

Let us define the natural partition of the system  by $\mathcal{P}=\left\{f_i(\Lambda)\times\Sigma^{+}\right\}_{i=1}^N$. It is easy to see that for every $i\in\mathcal{D}$
\begin{equation}\label{erefin}
\mathcal{P}\vee\xi^i=F^{-1}\xi^i.
\end{equation}
For $n\geq1$ let us recall that
$$
(\mathcal{P}_0^{n-1})(\y)=\mathcal{P}(x)\cap F(\mathcal{P}(F^{-1}(\y)))\cap\cdots\cap F^{-(n-1)}(\mathcal{P}(F^{n-1}(\y)).
$$
Now we prove a similar invariance for conditional measures like in Lemma~\ref{lem:invariance}

\begin{lemma}\label{linvcond}
  For every $i\in\mathcal{D}\cup\left\{0\right\}$ and measurable set $Q\subseteq\Lambda\times\Sigma$ we have
  $$
  \mu^{\xi^i}_{\y}(Q)=\mu_{F(\y)}^{F^{-1}\xi^i}(FQ)
  $$
  for $\mu$-almost every $\y\in\Lambda\times\Sigma$.
\end{lemma}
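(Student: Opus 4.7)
The plan is to obtain the lemma as a direct application of the abstract Lemma~\ref{lem:condtech} on pushforward conditional measures, combined with two specific properties of the system $(\Lambda\times\Sigma,F,\mu)$: the map $F$ is a measurable bijection of $\Omega=\Lambda\times\Sigma$, and the measure $\mu$ is $F$-invariant. Once these two facts are in place, the identity drops out by rewriting.

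First I would verify invertibility of $F$. The formula $F(\xv,\ii_+)=(f_{i_0}(\xv),\sigma\ii_+)$ is invertible because $\Phi$ satisfies the SSC: given $(\yv,\jj_+)$, the image $\yv\in\Lambda$ lies in a unique cylinder $f_k(\Lambda)$ by the SSC (on the set of full $\mu$-measure where the cylinders do not overlap), so $i_0=k$ is determined, and then $\xv=f_{i_0}^{-1}(\yv)$ and $\ii_+=(i_0,\jj_+)$ are uniquely recovered. This makes $F$ a measurable bijection (with measurable inverse). Next I would record that $\mu=\Pi_*\nu$ is $F$-invariant: this is immediate from the conjugacy $\Pi\circ\sigma=F\circ\Pi$ stated just before and the $\sigma$-invariance of $\nu$. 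Finally, measurability of the partition $F^{-1}\xi^i$ follows from measurability of $\xi^i$ (guaranteed by the Hölder continuity of $\ii_+\mapsto F^i_{\ii_+}$ from Proposition~\ref{thm:BGprop}) and of $F$.

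Now I would apply Lemma~\ref{lem:condtech} with $\Omega=\Omega'=\Lambda\times\Sigma$, $\lambda=\mu$, $g=F$, and $\zeta=\xi^i$. Since $F_*\mu=\mu$, the lemma yields
\begin{equation*}
\mu^{F^{-1}\xi^i}_{z}=F_*\bigl(\mu^{\xi^i}_{F^{-1}(z)}\bigr)
\end{equation*}
for $\mu$-almost every $z$. Specializing to $z=F(\y)$, this becomes $\mu^{F^{-1}\xi^i}_{F(\y)}=F_*\bigl(\mu^{\xi^i}_{\y}\bigr)$ for $\mu$-almost every $\y$. Evaluating both sides on the set $FQ$ and using the definition of pushforward,
\begin{equation*}
\mu^{F^{-1}\xi^i}_{F(\y)}(FQ)=F_*\bigl(\mu^{\xi^i}_{\y}\bigr)(FQ)=\mu^{\xi^i}_{\y}\bigl(F^{-1}(FQ)\bigr)=\mu^{\xi^i}_{\y}(Q),
\end{equation*}
which is the claimed identity. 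The case $i=0$ is included since $\xi^0$ and hence $F^{-1}\xi^0$ are measurable. There is no real obstacle here beyond the bookkeeping: the only point that genuinely uses the geometry of the system is the bijectivity of $F$, i.e.\ the SSC; this is precisely what makes the induced system invertible and distinguishes the current setting from that of Section~\ref{sec:Lift}, where the analogous statement required the more involved argument of Lemma~\ref{lem:invariance}.
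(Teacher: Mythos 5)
Your proof is correct, and it is a genuinely different and cleaner route than the paper's. The paper proves Lemma~\ref{linvcond} by hand: first for $i=0$, using the explicit characterization of $\mu^{\xi^0}_{\y}$ as a weak-* limit of normalized restrictions to cylinders (from \eqref{econddefd} and Simmons' disintegration theorem), together with $F$-invariance of $\mu$ and the refinement relation $\mathcal{P}\vee\xi^i=F^{-1}\xi^i$; then for $i\in\mathcal{D}$, it bootstraps from the $i=0$ case via the second limit in \eqref{econddefd}, controlling how $F$ distorts transversal balls. In contrast, you bypass the limit machinery entirely by noticing that $F$ is a measure-preserving \emph{bijection} of $\Omega=\Lambda\times\Sigma$ (courtesy of the SSC), which makes the situation a direct instance of the abstract pushforward identity in Lemma~\ref{lem:condtech}: applying it with $g=F$ and $\zeta=\xi^i$ gives $\mu^{F^{-1}\xi^i}_{F(\y)}=F_*\bigl(\mu^{\xi^i}_{\y}\bigr)$ in one stroke, from which the claim falls out by evaluating at $FQ$. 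This treats all indices $i\in\mathcal{D}\cup\{0\}$ uniformly, rather than in two cases, and uses only the uniqueness of Rokhlin disintegrations rather than the finer limit description. It is notable that the paper already invokes Lemma~\ref{lem:condtech} in the analogous Lemma~\ref{lem:invariance} for the non-invertible lift $G$ (where it can only be applied to the local branch $\widehat{f}_{\iiv}$, which is why the extra bookkeeping with $\mathcal{P}_0^{k-1}$ appears there); your observation is precisely that in the invertible TDS setting one can apply it globally to $F$ itself, making the invariance statement essentially formal. The only side conditions you need to check---bijectivity of $F$ (SSC), $F$-invariance of $\mu$ (conjugacy $\Pi\circ\sigma=F\circ\Pi$), and measurability of $F^{-1}\xi^i$ (Hölder continuity of $\ii_+\mapsto F^i_{\ii_+}$)---are all handled correctly in your write-up.
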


\begin{proof}
  First, we show the claim for $i=0$. By \eqref{econddefd} and $F$-invariance of $\mu$,
  \begin{equation}\label{efor0}
  \begin{split}
  \mu^{\xi^0}_{\y}(Q)&=\lim_{n\rightarrow\infty}\frac{\mu(Q\cap(\Lambda\times[\ii_+|_{0}^n])}{\mu(\Lambda\times[\ii_+|_{0}^n])}=\lim_{n\rightarrow\infty}\frac{\mu(F\left(Q\cap(\Lambda\times[\ii_+|_{0}^n])\right)}{\mu(F\left(\Lambda\times[\ii_+|_{0}^n]\right))}\\
  &=\lim_{n\rightarrow\infty}\frac{\mu(F(Q)\cap(\Lambda\times[\sigma\ii_+|_{0}^n])\cap\mathcal{P}(F(\y)))}{\mu((\Lambda\times[\sigma\ii_+|_{0}^n])\cap\mathcal{P}(F(\y)))}=\frac{\mu^{\xi^0}_{F(\y)}(F(Q)\cap\mathcal{P}(F(\y)))}{\mu^{\xi^0}_{F(\y)}(\mathcal{P}(F(\y)))}\\&=\mu^{F^{-1}\xi^0}_{F(\y)}(F(Q)),
  \end{split}
  \end{equation}
  where in the last two equations we used \eqref{econdcond} and \eqref{erefin}. Let us then assume that $i \in \mathcal{D}$. By using \eqref{efor0}, we similarly get
  $$
  \mu^{\xi^i}_{\y}(Q)=\lim_{\delta\rightarrow0+}\frac{\mu^{\xi^0}_{\y}(Q\cap B^T_i(\y,\delta))}{\mu^{\xi^0}_{\y}(B^T_i(\y,\delta))}=\lim_{\delta\rightarrow0+}\frac{\mu^{F^{-1}\xi^0}_{F(\y)}(F\left(Q\cap B^T_i(\y,\delta))\right)}{\mu^{F^{-1}\xi^0}_{F(\y)}(F\left(B^T_i(\y,\delta)\right))}.
  $$
  It is easy to see that with the constant $c=\max_{i\in\mathcal{S}}\|A_i^{-1}\|>0$, we have
  $$
  B^T_i(F(\y),c^{-1}\delta)\cap\mathcal{P}(F(\y))\subseteq F\left(B^T_i(\y,\delta)\right)\subseteq B^T_i(F(\y),c\delta)\cap\mathcal{P}(F(\y))
  $$
  for every $\delta>0$. Hence
  \begin{align*}
  \lim_{\delta\rightarrow0+}\frac{\mu^{F^{-1}\xi^0}_{F(\y)}(F\left(Q\cap B^T_i(\y,\delta))\right)}{\mu^{F^{-1}\xi^0}_{F(\y)}(F\left(B^T_i(\y,\delta)\right))}&=\lim_{\delta\rightarrow0+}\frac{\mu^{\xi^0}_{F(\y)}(F(Q)\cap\mathcal{P}(F(\y))\cap F(B^T_i(\y,\delta)))}{\mu^{\xi^0}_{F(\y)}(\mathcal{P}(F(\y))\cap F(B^T_i(\y,\delta)))}\\&
  =\frac{\mu^{\xi^i}_{F(\y)}(F(Q)\cap\mathcal{P}(F(\y)))}{\mu^{\xi^i}_{F(\y)}(\mathcal{P}(F(\y)))}=\mu^{F^{-1}\xi^i}_{F(\y)}(F(Q)).
  \end{align*}
  The proof is finished.
\end{proof}

\section{Proof of Ledrappier-Young formula with TDS}\label{sec:proofLYd}

This section is devoted to show the following theorem.

\begin{theorem}\label{tLYSSC}
  Let $\Phi=\left\{f_i(\xv)=A_i\xv+\tv_i\right\}_{i=1}^N$ be an IFS on $\R^d$ such that $\Alpha:=\left\{A_1, A_2,\dots, A_N\right\}$ is a finite set of contracting non-singular $d\times d$ matrices satisfying the TDS. Let us also assume that $\Phi$ satisfies the SSC. Then for every $\sigma$-invariant and ergodic measure $\nu$ on $\Sigma^{\pm}$, the measure $\cmd{\y}{\xi^i}$ is exact dimensional for every $i\in\mathcal{D}\cup\left\{0\right\}$ and $\mu$-almost every $\y$, where $\mu=\Pi_*\nu$. Moreover,
  $$
  \dim\cmd{\y}{\xi^i}=\frac{H^{d-1}}{\ly{d}}+\sum_{j=i}^{d-2}\frac{H^j-H^{j+1}}{\ly{j+1}}.
  $$
\end{theorem}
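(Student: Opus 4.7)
\emph{Overall strategy.} My plan is to follow the three-step architecture of Section~\ref{sec:proofLY2}, adapting each step to the invertible system $(F,\Lambda\times\Sigma)$ of Section~\ref{sec:Liftd}. The H\"older subspaces $F^i_{\ii_+}$ and the uniformly transverse subspaces $e^i_\ii$ supplied by Lemma~\ref{lTDSmanif} play the role that the Oseledets lines played in Section~\ref{sec:proofLY2}, and the invertibility of $F$ (guaranteed by the SSC) replaces the non-invertible lift $G$ of the planar construction. Enumerate the elements of $\mathcal{D}\cup\{0\}$ as $0=m_0<m_1<\cdots<m_K=\max\mathcal{D}$ and set $m_{K+1}=d$. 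Because $H^j=H^{j+1}$ and $\chi^j_\nu=\chi^{j+1}_\nu$ for $j\notin\mathcal{D}$, the only non-vanishing terms in the target formula come from transitions between consecutive dominated indices, so I would prove exact dimensionality by downward induction on $l$ from $l=K$ to $l=0$.

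\emph{Base case $i=m_K$.} The leaves of $\xi^{m_K}$ are the finest dominated stable manifolds $F^{m_K}_{\ii_+}$. Lemma~\ref{lTDSmanif}(3) applied with $e^{m_{K+1}}_\ii=F^{m_K}_\ii$ yields a uniform constant $c\ge1$ for which
\[
\|A^{(n)}(\ii)|F^{m_K}_{\ii_+}\|\le c\,\mathfrak{m}(A^{(n)}(\ii)|F^{m_K}_{\ii_+})\le c^2\alpha_d(A^{(n)}(\ii)),
\]
so the cocycle along these leaves is quasi-isotropic with rate $\asymp\alpha_d(A^{(n)}(\ii))$. Combined with the SSC gap $\kappa=\min_{i\ne j}\mathrm{dist}(f_i(\Lambda),f_j(\Lambda))>0$, this yields the analogue of Lemma~\ref{ltoLY1}: the refined leaf $(\mathcal{P}_0^{n-1}\vee\xi^{m_K})(\y)$ is sandwiched, inside $\xi^{m_K}(\y)$, between balls of radii comparable to $\alpha_d(A^{(n)}(\ii))$. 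The telescoping cocycle identity of Proposition~\ref{prop:1}, combined with Lemma~\ref{linvcond} and Birkhoff's ergodic theorem, then gives $\tfrac{1}{n}\log\mu^{\xi^{m_K}}_\y((\mathcal{P}_0^{n-1}\vee\xi^{m_K})(\y))\to-H^{m_K}=-H^{d-1}$, while Oseledets gives $\tfrac{1}{n}\log\alpha_d(A^{(n)}(\ii))\to-\chi^d_\nu$, producing the base-case dimension $H^{d-1}/\chi^d_\nu$.

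\emph{Inductive step.} For consecutive $i=m_{l-1}<j=m_l$ in $\mathcal{D}\cup\{0\}$, I would first prove the analogue of Proposition~\ref{pLY2}: the transversal measure of $\mu^{\xi^i}_\y$ along the direction $e^j_\ii$ (which complements $F^j_\ii$ inside $F^i_\ii$ by Lemma~\ref{lTDSmanif}) is exact dimensional with dimension $(H^i-H^j)/\chi^j_\nu$. The proof parallels Proposition~\ref{pLY2}: define modified transversal balls via the uniform lower bound on the angle between $e^j_\ii$ and $F^j_\ii$ supplied by Lemma~\ref{lTDSmanif}; combine Lemma~\ref{linvcond} with the cocycle estimate $\|A^{(n)}(\ii)|e^j_\ii\|\asymp\alpha_j(A^{(n)}(\ii))$ to rewrite the analogue of \eqref{eq:entropy} as a Birkhoff-type telescoping sum; and invoke Maker's ergodic theorem (Theorem~\ref{lmaker}) after verifying the $L^1$-domination analogue of Lemma~\ref{ltoLY2b}. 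The induction is then closed by combining this transversal exact dimensionality with the inductive hypothesis through Lemma~\ref{llbmuubtmu} (lower bound on $\ldimloc(\mu^{\xi^i}_\y,\y)$) and through the Egorov--Besicovitch matching argument of Proposition~\ref{prop:LY3} (upper bound), both of which transfer essentially verbatim.

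\emph{Main obstacle.} The principal technical difficulty arises in the transversal step when the gap $j-i>1$, so the transversal subspace $e^j_\ii$ has dimension strictly greater than one; this case never arises in Section~\ref{sec:proofLY2} because the simple-spectrum hypothesis forces every gap to be one. Fortunately, Lemma~\ref{lTDSmanif}(3) forces the contraction on $e^j_\ii$ to be quasi-isotropic with uniform multiplicative constants, so the cocycle restricted to $e^j_\ii$ behaves essentially as a scalar cocycle and multi-dimensional Euclidean balls on $e^j_\ii$ evolve like scalar intervals, reducing the multi-dimensional transversal computation to the scalar Birkhoff/Maker argument. A secondary simplification over Section~\ref{sec:proofLY2} is that the H\"older continuity of $\ii_+\mapsto F^j_{\ii_+}$ from Proposition~\ref{thm:BGprop} and the uniform transversality of Theorem~\ref{tdomsplitindex} replace the stochastic Oseledets-angle estimates of Lemma~\ref{lem:angle}, so no exponentially small correction enters the radius of the transversal balls.
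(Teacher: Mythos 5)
Your overall architecture (base case at $i=\max\mathcal{D}$, downward induction along consecutive dominated indices, combination via Lemma~\ref{llbmuubtmu} and the Egorov--Besicovitch matching) is the one the paper uses, and your treatment of the base case and the final combination step are both accurate. The problem is concentrated precisely at the point you flag as the ``main obstacle,'' and there your resolution is too optimistic.

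You claim that because Lemma~\ref{lTDSmanif}(3) makes the contraction on $e^j_{\ii}$ quasi-isotropic, ``the cocycle restricted to $e^j_\ii$ behaves essentially as a scalar cocycle'' and the one-step Birkhoff/Maker telescoping of Proposition~\ref{pLY2} goes through. It does not. The one-step telescoping in Proposition~\ref{pLY2} hinges on the exact multiplicativity
$\|A_{i_{l-1}}|K^{i+1}(G^l(x))\|\,\|A_{i_{l}}\cdots A_{i_{n-1}}|K^{i+1}(G^{n}(x))\|=\|A_{i_{l-1}}\cdots A_{i_{n-1}}|K^{i+1}(G^{n}(x))\|$, which is a genuine identity only because $K^{i+1}$ is one-dimensional. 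For $\dim e^j_{\ii}>1$, the restricted operator norm along the invariant subspace is only submultiplicative up to the uniform constant $C$ of Lemma~\ref{lTDSmanif}(3), and the quasi-isotropy likewise only gives $\mathfrak{m}(A^{(n)}|e^j)\le\|A^{(n)}|e^j\|\le C^2\,\mathfrak{m}(A^{(n)}|e^j)$; neither $\|A^{(n)}|e^j\|$ nor $\alpha_j(A^{(n)})$ is a cocycle. Consequently a direct one-step telescope pays a multiplicative factor $C^{O(1)}$ at each step, which accumulates to $C^{O(n)}$ after $n$ steps. Divided by $n$, this leaves a constant error of order $\log C$ in the exponent, so the dimension bound you would obtain this way is off by a fixed amount and does not close the induction.

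The paper's Proposition~\ref{pLY2d} is, in the authors' own words, ``significantly different'' from Proposition~\ref{pLY2} precisely because of this, and the extra idea you are missing is the two-parameter block argument: instead of iterating $F^{-1}$ step by step, iterate the $n$-block $F^{-n}$ a total of $k$ times. The radius sandwich \eqref{econt2d} then costs a fixed factor (tracked by $c_{n,l}=C^{\pm2l}$) per \emph{block}, not per step, so the accumulated error after $k$ blocks is $C^{O(k)}$ while the total time is $nk$. After applying Maker's theorem (with respect to the ergodic map $F^{-n}$) together with the Shannon--McMillan--Breiman argument to absorb the leftover ball of radius $C^{-2k}$, one arrives, for each fixed $n$, at the bound $\overline{\dim}\le\frac{(\alpha+n)H^i-nH^j}{n\chi^j_\nu}$ (and a matching lower bound with an $a_n\to 1$ factor); only after letting $n\to\infty$ does the error term $\alpha H^i/(n\chi^j_\nu)$ disappear. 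This limit in $n$ is the step your argument lacks, and without it the claim that ``multi-dimensional Euclidean balls on $e^j_\ii$ evolve like scalar intervals'' is not justified.
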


By applying the lifting argument used in Section~\ref{sec:Lift} for higher dimensional systems, we will prove Theorem~\ref{texactd} as a consequence of Theorem~\ref{tLYSSC} at the end of this section.
The proof of Theorem~\ref{tLYSSC} is decomposed into two propositions.

\begin{prop}\label{pLY1}
Under the assumptions of Theorem~\ref{tLYSSC}, if $m=\max\mathcal{D}$, then the measure $\cmd{\y}{\xi^m}$ is exact dimensional for $\mu$-almost every $\y$ and
        $$
        \dim\cmd{\y}{\xi^m}=\frac{H^m}{\ly{m+1}}=\frac{H^{d-1}}{\ly{d}}.
        $$
\end{prop}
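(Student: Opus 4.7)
The strategy mirrors the proof of Proposition~\ref{prop:1}, with two essential adaptations. First, the leaves of $\xi^m$ are $(d-m)$-dimensional instead of one-dimensional. Second, the contraction rate along a leaf is no longer the deterministic constant $\rho$ but the random quantity $\alpha_{m+1}(A^{(n)}(\cdot))$, which by Oseledets decays like $e^{-n\ly{m+1}}$. The hypothesis $m=\max\mathcal{D}$ is what rescues the argument: by TDS, all singular values $\alpha_i$ with $i>m$ are mutually comparable, so the restriction of $A^{(n)}$ to a leaf of $F^m$ is approximately conformal, with singular values uniformly of order $\alpha_{m+1}(A^{(n)})$.

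The first step is a geometric lemma modelled on Lemma~\ref{ltoLY1}: there exists $c>0$ such that for every $\y=(\yv,\jj_+)\in\Lambda\times\Sigma$ and every $n\geq 1$,
\begin{equation*}
\xi^m(\y)\cap B(\y,c^{-1}r_n) \ \subseteq\ (\mathcal{P}_0^{n-1}\vee\xi^m)(\y)\ \subseteq\ \xi^m(\y)\cap B(\y,c\,r_n),
\end{equation*}
where $r_n:=\alpha_{m+1}(A^{(n)}(\jj'_+))$ and $\jj'_+$ is the symbolic coordinate of $F^{-n}\y$. Unfolding definitions, $\jj'_+$ begins with the past symbols $(k_{-n},\ldots,k_{-1})$ of $\y$, hence $A^{(n)}(\jj'_+)=A_{k_{-1}}\cdots A_{k_{-n}}$, and by SSC the cell $(\mathcal{P}_0^{n-1}\vee\xi^m)(\y)$ is precisely the intersection of the level-$n$ cylinder $f_{k_{-1}\cdots k_{-n}}(\Lambda)$ with the affine subspace $\yv+F^m_{\jj_+}$. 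The outer inclusion is immediate from Theorem~\ref{tdomsplitindex}(4). For the inner inclusion I would combine (i) the TDS estimate $\alpha_d(A^{(j)}(\jj'_+))\geq C_0\,\alpha_{m+1}(A^{(n)}(\jj'_+))$ for every $j\leq n$, which holds because $m=\max\mathcal{D}$ and because $\alpha_{m+1}$ is non-increasing under appending factors, with (ii) the SSC separation bound $\geq 2\kappa\,\alpha_d(A^{(j-1)}(\jj'_+))$ between $f_{k_{-1}\cdots k_{-n}}(\Lambda)$ and any level-$n$ cylinder that first differs at position $j$. Any $\xv\in\Lambda$ within distance $c^{-1}r_n$ of $\yv$ is then forced into $f_{k_{-1}\cdots k_{-n}}(\Lambda)$.

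The second step computes the mass of the cell by telescoping. From $\mathcal{P}\vee\xi^m=F^{-1}\xi^m$ in \eqref{erefin} and the invertibility of $F$ one has $\mathcal{P}_0^{n-1}\vee\xi^m=F^{-n}\xi^m$, so
\begin{equation*}
\log\mu^{\xi^m}_\y\bigl(F^{-n}\xi^m(\y)\bigr)=\sum_{k=0}^{n-1}\log\mu^{F^{-k}\xi^m}_\y\bigl(F^{-(k+1)}\xi^m(\y)\bigr).
\end{equation*}
Lemma~\ref{linvcond} identifies each summand with $\log\mu^{\xi^m}_{F^k\y}\bigl((F^{-1}\xi^m)(F^k\y)\bigr)=\log\mu^{\xi^m}_{F^k\y}(\mathcal{P}(F^k\y))$. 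Birkhoff's Ergodic Theorem for the ergodic system $(F,\mu)$ then yields
\begin{equation*}
\tfrac{1}{n}\log\mu^{\xi^m}_\y\bigl((\mathcal{P}_0^{n-1}\vee\xi^m)(\y)\bigr)\longrightarrow -H^m
\end{equation*}
for $\mu$-almost every $\y$. Combined with the geometric lemma and the two-sided Oseledets convergence $\tfrac{1}{n}\log r_n\to -\ly{m+1}$ along reverse orbits, this gives local dimension $H^m/\ly{m+1}$ along the scale sequence $r_n$; because consecutive $r_n$ are comparable thanks to the uniform contractivity of the $A_i$, interpolation upgrades this to genuine exact dimensionality. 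The equivalent form $H^{d-1}/\ly{d}$ then follows from the conventions $H^i=H^m$ for $i\in(m,d-1]$ together with Oseledets' equality $\ly{m+1}=\cdots=\ly{d}$ at non-dominated indices.

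The main obstacle is the inner inclusion of the geometric lemma. Generic level-$n$ cylinders of the base IFS may be separated by only $\alpha_d(A^{(n-1)})$, which without maximality would in general be much smaller than the leaf-contraction scale $\alpha_{m+1}(A^{(n)})$; a ball of radius $\Theta(r_n)$ on a leaf could then intersect several neighbouring cylinders and the inclusion would collapse. It is precisely the TDS hypothesis combined with $m=\max\mathcal{D}$ that forces $\alpha_d(A^{(n-1)})$ and $\alpha_{m+1}(A^{(n)})$ to be comparable up to a multiplicative constant independent of $n$, thereby rescuing the scheme.
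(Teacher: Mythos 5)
Your proposal is correct and follows essentially the same route as the paper: adapt Lemma~\ref{ltoLY1} by replacing the deterministic scale $\rho^n$ with the leaf-contraction scale $\alpha_{m+1}(A^{(n)}(\cdot))$, then run the $F^{-1}\xi^m = \mathcal{P}\vee\xi^m$ telescoping/Birkhoff argument from Proposition~\ref{prop:1} verbatim. The paper merely sketches this, stating that ``one can show a similar statement to that of Lemma~\ref{ltoLY1}'' and then ``we omit the detailed proof''; you have supplied the missing details, and in particular the inner inclusion (where the SSC separation $\geq 2\kappa\,\alpha_d(A^{(j-1)})$ is compared to $r_n$ via $\alpha_d(A^{(j-1)})\geq\alpha_d(A^{(n)})\asymp\alpha_{m+1}(A^{(n)})$, the last comparability being exactly where $m=\max\mathcal{D}$ enters) is exactly the step the paper leaves implicit. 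Your step-two computation $\mathcal{P}_0^{n-1}\vee\xi^m = F^{-n}\xi^m$ with Lemma~\ref{linvcond} and Birkhoff matches the published scheme, and the interpolation between the scales $r_n$ is justified since $r_{n+1}/r_n \geq \min_i\alpha_d(A_i)$.

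One remark that may help you when reading the printed text: the paper writes $\alpha_m(A_{i_{-1}}\cdots A_{i_{-n}})$ as the replacement scale, but this is a typo. Lemma~\ref{lTDSmanif}(3) applied to $e^{i_{k+1}}_{\ii}=F^{m}_{\ii}$ (with $i_{k+1}=d$) gives comparability of $\|A^{(n)}|F^m\|$ and $\mathfrak{m}(A^{(n)}|F^m)$ with $\alpha_d(A^{(n)})$, and the dominated splitting at index $m$ forces $\alpha_{m+1}/\alpha_m\to 0$ exponentially, so $\alpha_m$ cannot be comparable to $\mathfrak{m}(A^{(n)}|F^m)$. The intended scale is $\alpha_{m+1}$ (equivalently $\alpha_d$, since all indices above $m$ are non-dominated and hence comparable), which is what you correctly used and what produces the exponent $\ly{m+1}=\ly{d}$ in the denominator.
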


We note that by the definition of $H^{d-1}$ and the TDS, $H^m/\ly{m+1}=H^{d-1}/\ly{d}$.
The proof of Proposition~\ref{pLY1} is analogous to the proof of Proposition~\ref{prop:1}. By replacing $\rho^n$ with $\alpha_m(A_{i_{-1}}\cdots A_{i_{-n}})$, for which by Lemma~\ref{lTDSmanif}
$$
C^{-1}\|A^{(n)}(\ii'_+)|F^m_{\ii'_+}\|\leq\alpha_m(A_{i_{-1}}\cdots A_{i_{-n}})\leq C\mathfrak{m}(A^{(n)}(\ii'_+)|F^m_{\ii'_+}),
$$
where $F^{-n}(\xv,\ii)=(\xv',\ii'_+)$, one can show a similar statement to that of Lemma~\ref{ltoLY1}. By replacing $\cm{x}{\xi^d}$ with $\cmd{\y}{\xi^m}$ and $G$ with $F^{-1}$ in the proof of Proposition~\ref{prop:1}, one is then easily able to prove Proposition~\ref{pLY1}. We omit the detailed proof.

\begin{prop}\label{pLY2d}
  Under the assumptions of Theorem~\ref{tLYSSC}, if $i,j$ be two consecutive elements of $\mathcal{D}\cup\left\{0\right\}$ such that $i<j$, then the measure $(\cmd{\y}{\xi^i})^T_{(F^{j}_{\ii_+})^{\bot}}$ is exact dimensional for $\mu$-almost every $\y=(\xv,\ii_+)$ and
  $$
  \dim(\cmd{\y}{\xi^i})^T_{(F^{j}_{\ii_+})^{\bot}}=\frac{H^i-H^j}{\ly{j}}.
  $$
\end{prop}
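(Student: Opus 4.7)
The plan is to follow the scheme of Proposition~\ref{pLY2}, now in the invertible setting, replacing the map $G$ by $F^{-1}$ and the measurable Oseledets/Furstenberg flag directions by the H\"older continuous, dynamically invariant subspaces produced by Lemma~\ref{lTDSmanif}. Since $i<j$ are consecutive in $\mathcal{D}\cup\{0\}$, I will work with the $(j-i)$-dimensional complement $e^j_{\ii_+}$ of $F^j_{\ii_+}$ inside $F^i_{\ii_+}$; by Lemma~\ref{lTDSmanif}(3), all singular values of $A^{(n)}(\ii)|e^j_{\ii_+}$ are uniformly comparable to $\alpha_j(A^{(n)}(\ii))$, which is what will let me treat this higher-dimensional transversal like a one-dimensional one.

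The setup proceeds in three steps. First, I introduce the modified transversal ball
\[
B^t_j(\y,\delta)=\{(\yv,\ii_+)\in\xi^i(\y):|\proj_{e^j_{\ii_+}}(\xv-\yv)|<\delta\},
\]
which by the uniform angle lower bound of Lemma~\ref{lTDSmanif} is comparable to $B^T_j(\y,\delta)$ up to multiplicative constants. Second, using the invariance $A_{i_{-1}}e^j_{(\sigma^{-1}\ii)_+}=e^j_{\ii_+}$ together with Lemma~\ref{lTDSmanif}(3), I will establish an analogue of Lemma~\ref{lem:econt2}: there is $C\geq 1$ such that
\[
B^t_j(\y,C^{-1}\delta\alpha_j(A_{i_{-1}}))\cap\mathcal{P}(\y)\subseteq F(B^t_j(F^{-1}\y,\delta))\cap\mathcal{P}(\y)\subseteq B^t_j(\y,C\delta\alpha_j(A_{i_{-1}}))\cap\mathcal{P}(\y).
\]
Third, I define
\[
w^j(\y)=\cmd{\y}{\xi^j}(\mathcal{P}(\y)),\qquad w^j_\delta(\y)=\frac{\cmd{\y}{\xi^i}(B^t_j(\y,\delta)\cap\mathcal{P}(\y))}{\cmd{\y}{\xi^i}(B^t_j(\y,\delta))}.
\]
By \eqref{econddefd} one has $w^j_\delta\to w^j$ pointwise and in $L^1(\mu)$, and since $F^{-1}\xi^j=\mathcal{P}\vee\xi^j$ by \eqref{erefin} one has $H^j=-\int\log w^j\dd\mu$. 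The $L^1$-domination $\sup_\delta|\log w^j_\delta|\in L^1(\mu)$ will be a direct adaptation of \cite[Lemma~3.6]{Ba}.

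The main computation is then a telescoping. Choose $r_n$ and a sequence of rescaled radii $R_0=r_n,R_1,\dots,R_n$ with $R_{k+1}\asymp R_k/\alpha_j(A_{i_{-k-1}})$ such that $R_n$ is uniformly comparable to $\diam(\Lambda)$; by Oseledec's theorem applied to the $\sigma^{-1}$-cocycle, such a choice forces $\tfrac{1}{n}\log r_n\to-\ly{j}$. Iterating the contraction identity combined with Lemma~\ref{linvcond} and \eqref{erefin} yields
\[
\log\cmd{\y}{\xi^i}(B^t_j(\y,r_n))=\log\cmd{F^{-n}\y}{\xi^i}(B^t_j(F^{-n}\y,R_n))+\sum_{k=0}^{n-1}\log\cmd{F^{-k}\y}{\xi^i}(\mathcal{P}(F^{-k}\y))-\sum_{k=0}^{n-1}\log w^j_{R_k}(F^{-k}\y).
\]
Dividing by $n$: the boundary term is $o(n)$ since $R_n\asymp\diam(\Lambda)$ forces $B^t_j(F^{-n}\y,R_n)$ to cover essentially the entire $\xi^i$-leaf; Birkhoff's ergodic theorem for $(F^{-1},\mu)$ sends the first sum to $-H^i$; and Maker's theorem (Theorem~\ref{lmaker}), applied with $h_{n,l}(x)=\log w^j_{R_l}(x)$ (which depends on $x$ only through its first $n-l$ past symbols and tends to $\log w^j(x)$ uniformly as $n-l\to\infty$), sends the second sum to $-H^j$. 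Dividing by $\log r_n$ gives the local dimension $(H^i-H^j)/\ly{j}$ along the subsequence $r_n$, and the uniform boundedness of $r_n/r_{n+1}$ upgrades this to the full radial limit.

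The main obstacle, absent from the simple-spectrum case of Proposition~\ref{pLY2}, is the possibly higher dimension of $e^j_{\ii_+}$: $A_{i_{-1}}$ does not act on it as a scalar, so the image of a transversal ball is an ellipsoid rather than a ball. The TDS hypothesis is indispensable precisely here, since by Lemma~\ref{lTDSmanif}(3) all singular values of the restriction are comparable to $\alpha_j$, and hence every such ellipsoid is uniformly sandwiched between two concentric balls of comparable radii, allowing the one-dimensional-style telescoping to go through verbatim. A secondary technical point is the $L^1$-domination of $\sup_\delta|\log w^j_\delta|$, which uses the H\"older continuity from Lemma~\ref{lTDSmanif}(4) together with the quasi-Bernoulli property of $\nu$ to adapt \cite[Lemma~3.6]{Ba}.
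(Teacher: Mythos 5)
Your high-level scheme (modified transversal balls, weight functions $w^j_\delta$, Maker's and Birkhoff's theorems, a telescoping identity) is the right one and matches the paper, and you correctly identify the essential new difficulty: once $\dim e^j_{\ii_+}>1$, the maps $A_{i_{-1}}$ act on the transversal space as genuine linear maps rather than scalars. But the remedy you propose does not actually close the gap; this is exactly the point the paper flags by saying the proof of Proposition~\ref{pLY2d} is ``significantly different'' from Proposition~\ref{pLY2} and must be given ``complete details.''

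The flaw is the one-step-at-a-time telescoping. Your single-step contraction estimate holds only up to a fixed multiplicative constant $C\geq1$: $F(B^t_j(F^{-1}\y,\delta))$ is sandwiched between transversal balls of radii $C^{-1}\delta\,\alpha_j(A_{i_{-1}})$ and $C\delta\,\alpha_j(A_{i_{-1}})$. After $n$ single-step iterations the cumulative radius error is $C^{\pm n}$, and since $\log r_n\approx -n\ly{j}$ this contributes a term of order $\log C/\ly{j}$ to the local dimension that does \emph{not} vanish as $n\to\infty$. In Proposition~\ref{pLY2} this issue never arises because $K^{i+1}$ is one-dimensional and the identity $\|A_{i_{l-1}}|K^{i+1}\|\cdot\|A_{i_l}\cdots A_{i_{n-1}}|K^{i+1}\|=\|A_{i_{l-1}}\cdots A_{i_{n-1}}|K^{i+1}\|$ is \emph{exact}; there is nothing to accumulate. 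You cannot import that step ``verbatim'' when $\dim e^j_{\ii_+}>1$. A second, related problem is the normalization: with $R_{k+1}\asymp R_k/\alpha_j(A_{i_{-k-1}})$ and $R_n\asymp\diam\Lambda$, Birkhoff gives $\tfrac1n\log r_n\to\int\log\alpha_j(A_{i_0})\dd\nu$, which is in general strictly larger than $-\ly{j}=\lim_n\tfrac1n\log\alpha_j(A_{i_{-1}}\cdots A_{i_{-n}})$, since single-step singular values are not multiplicative; so the claimed convergence of $\tfrac1n\log r_n$ to $-\ly{j}$ fails. The paper's cure is a double limit: fix a block length $n$, telescope with $F^{-n}$ in $k$ blocks, and use Lemma~\ref{lTDSmanif}(3) applied to the genuine $n$-step cocycle $A^{(n)}$. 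The accumulated constant over $nk$ steps is then only $C^{\pm 2k}$, contributing $O(\log C/(n\ly{j}))$ to the dimension estimate, which vanishes when one then sends $n\to\infty$ (this is visible in the paper's correction factors $c_{n,l}=C^{\mp 2l}$ and final bounds $\frac{(\alpha+n)H^i-nH^j}{n\ly{j}}$ and $\frac{a_n n(H^i-H^j)}{n\ly{j}}$, both of which need $n\to\infty$ as a second limit, with $\mathfrak{m}(A_{i_{-1}}\cdots A_{i_{-nk}}|e^j)$ or $\|A_{i_{-1}}\cdots A_{i_{-nk}}|e^j\|$ as the radius scale). Maker's theorem is likewise applied along $F^{-n}$ rather than $F^{-1}$. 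Without this block structure and second limit your argument can only produce a dimension estimate accurate up to a fixed error of order $\log C/\ly{j}$.
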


We note that the measure $(\cmd{\y}{\xi^i})^T_{(F^{j}_{\ii_+})^{\bot}}$ is the orthogonal projection of the measure $\cmd{\y}{\xi^i}$ onto the orthogonal complement of $F^{j}_{\ii_+}$. Since $i<j$, we have $F^{j}_{\ii_+}\subset F^{i}_{\ii_+}$ and $\dim((F^{j}_{\ii_+})^{\bot}\cap F^{i}_{\ii_+})\geq1$.
Hence the proof of Proposition~\ref{pLY2d} is significantly different to the proof of Proposition~\ref{pLY2}, because the subspace, where $(\cmd{\y}{\xi^i})^T_{(F^{j}_{\ii_+})^{\bot}}$ is defined, can have strictly larger dimension than $1$. Therefore, we will give complete details.
Note that $\ly{j}=\ly{j-1}=\dots=\ly{i+1}$.

Let us modify the definition \eqref{dtransball} of the transversal ball $B^T_j(\y,\delta)$ and set
$$
B^t_j(\xv,\ii_+,\delta)=\{(\yv,\jj_+)\in\Lambda\times\Sigma:\ii_+=\jj_+\text{ and }\mathrm{dist}(e^{j}_{(\xv,\ii_+)}\cap P_j(\xv,\ii_+),e^{j}_{(\xv,\ii_+)}\cap P_j(\yv,\jj_+))<\delta\},
$$
where $e^{j}_{(\xv,\ii_+)}=F^{i}_{\ii_+}\cap E^j_{\xv}$ by definition; see Lemma~\ref{lTDSmanif}. Since the subspaces $E_{\xv}^j$ and $F_{\ii_+}^j$ are uniformly transversal, there exists a constant $c>0$ such that
$$
B^t_j(\y,c^{-1}\delta)\subseteq B^T_j(\y,\delta)\subseteq B^t_j(\y,c\delta)
$$
for every $\y\in\Lambda\times\Sigma$ and $\delta>0$. Let us define functions $g^{j,n}(\y)=\cmd{\y}{\xi^{j}}(\mathcal{P}_0^{n-1}(\y))$ and $g_{\delta}^{j,n}(\y)=\cmd{\y}{\xi^{i}}(B^t_{j}(\y,\delta)\cap\mathcal{P}_0^{n-1}(\y))/\cmd{\y}{\xi^{i}}(B^t_{j}(\y,\delta))$.
By \eqref{econddefd}, $g_{\delta}^{j,n}\rightarrow g^{j,n}$ as $\delta\rightarrow0+$ for $\mu$-almost everywhere and, since $g_{\delta}^{j,n}$ is uniformly bounded, $g_{\delta}\rightarrow g$ in $L^1(\mu)$ as $\delta\rightarrow0+$.

The following lemma guarantees that we may apply Maker's Ergodic Theorem.

\begin{lemma}
        The function $\sup_{\delta>0}\{-\log g_{\delta}^{j,n}\}$ is in $L^1(\mu)$.
\end{lemma}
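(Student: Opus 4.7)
My plan follows the strategy of Lemma~\ref{ltoLY2b}, which is itself modelled on \cite[Lemma~3.6]{Ba}. The goal will be to establish a distributional estimate of the form
\begin{equation*}
\mu\bigl\{ \y : \sup_{\delta > 0}(-\log g_\delta^{j,n}(\y)) > t \bigr\} \leq C_n e^{-t}
\end{equation*}
for every $t > 0$, where $C_n$ depends only on the fixed level $n$. Integrating this tail bound over $t \in (0,\infty)$ then immediately yields $\sup_{\delta>0}(-\log g_\delta^{j,n}) \in L^1(\mu)$, because $-\log g_\delta^{j,n} \geq 0$ and so the integral equals $\int_0^\infty \mu\{\sup_\delta(-\log g_\delta^{j,n}) > t\}\,\dd t$.

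The first step will be to rewrite the event $\{g_\delta^{j,n}(\y) < e^{-t}\}$ as the statement that at least a $(1-e^{-t})$-fraction of the $\mu_\y^{\xi^i}$-mass of the transversal ball $B_j^t(\y,\delta)$ lies outside the cylinder $\mathcal{P}_0^{n-1}(\y)$. Since $n$ is fixed, $\mathcal{P}_0^{n-1}$ consists of at most $N^n$ atoms, so this leaked mass has to be distributed among finitely many neighbouring cylinders.

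The second step will be a Besicovitch covering argument. For each $\y$ in the bad set, let $\delta(\y)$ be a witness, and consider the family $\{B_j^t(\y,\delta(\y))\}$ of transversal balls, performed leafwise inside each fibre $\xi^i(\y)$. The uniform transversality and the uniform lower bound on the angles between the subspaces $e^{j}_\cdot$ furnished by Lemma~\ref{lTDSmanif}(3) allow the transversal ball $B_j^t(\y,\delta)$ to be compared uniformly with a Euclidean ball inside the affine subspace tangent to $e^j_\cdot$, whose dimension is $i_{s}-i_{s-1}$ (with $j=i_s$) and hence uniform in $\y$. Besicovitch's theorem then supplies a subfamily with bounded overlap, and summing $\mu_\y^{\xi^i}(B_j^t(\y_k,\delta(\y_k)) \setminus \mathcal{P}_0^{n-1}(\y_k))$ over the subfamily converts the pointwise inequality into the desired mass estimate. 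The leafwise estimate is then integrated via the disintegration $\mu = \int \mu_\y^{\xi^i}\,\dd\mu(\y)$ to bound $\mu$ of the bad set itself.

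The main obstacle will be ensuring that all geometric constants are uniform in $\y$: the transversal ball $B_j^t(\y,\delta)$ is defined in terms of the Oseledets-type subspaces $e^j_\cdot$, $E^j_\cdot$, and $F^j_\cdot$, whose geometry a priori depends on $\y$, so one has to verify that neither the dimension of the relevant ambient subspace nor the Besicovitch constant degenerate along $\mu$-typical $\y$. This is precisely the content of the uniform angle bound at the end of Lemma~\ref{lTDSmanif} together with the Hölder continuity of the stable distributions from Proposition~\ref{thm:BGprop}, so the argument in \cite[Lemma~3.6]{Ba} carries over after cosmetic modifications. I expect no other serious difficulty, since the finiteness of $\mathcal{P}_0^{n-1}$ removes the combinatorial complications that would otherwise appear if one wanted uniformity in $n$.
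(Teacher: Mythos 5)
Your proposal reconstructs the standard Ledrappier--Young maximal-inequality argument, which is precisely what \cite[Lemma~3.6]{Ba} does and what the paper invokes by reference, so the approach is the same. One step needs tightening, though: as written, summing $\mu_\y^{\xi^i}\bigl(B_j^t(\y_k,\delta(\y_k))\setminus\mathcal{P}_0^{n-1}(\y_k)\bigr)$ over the Besicovitch subfamily does not by itself yield the exponential tail bound. The correct bookkeeping is to first fix an atom $P\in\mathcal{P}_0^{n-1}$, cover $\{\sup_\delta(-\log g_\delta^{j,n})>t\}\cap P$ leafwise by balls $B_k$ centered at points of that set (so that $\mathcal{P}_0^{n-1}(\y_k)=P$ for all $k$), and then sum the \emph{intersections}: $\mu_\y^{\xi^i}\bigl(\{\cdot>t\}\cap P\bigr)\le\sum_k\mu_\y^{\xi^i}(B_k\cap P)<e^{-t}\sum_k\mu_\y^{\xi^i}(B_k)\le Ce^{-t}$ by bounded overlap, and then sum over the at most $N^n$ atoms. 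Summing the complements as you propose only gives $\sum_k\mu_\y^{\xi^i}(B_k)\le C/(1-e^{-t})$, which is bounded but not decaying, so you would have to combine it with the intersection bound anyway to recover the decay. Apart from this, the other ingredients are right: the reduction to genuine Euclidean balls is cleanest via the unmodified transversal ball $B^T_j$, which is the preimage of a fixed Euclidean ball under the orthogonal projection onto $(F^j_{\ii_+})^\perp$ (a fixed projection within each leaf), with $B^t_j$ uniformly comparable to $B^T_j$ by the angle bounds of Lemma~\ref{lTDSmanif} and Theorem~\ref{tdomsplitindex}; the finiteness of $\mathcal{P}_0^{n-1}$ then supplies the factor $N^n$, and integrating the leafwise estimate against the disintegration finishes the $L^1$ bound.
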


\begin{proof}
  As with Lemma \ref{ltoLY2b}, the proof is a slight modification of the proof of \cite[Lemma~3.6]{Ba}.
\end{proof}

Let us observe that
\begin{equation}\label{ebound1}
\begin{split}
\cmd{F^{-n}(\y)}{\xi^{i}}\biggl(B^t_{j}\biggl(F^{-n}(\y),\frac{\delta}{\|A^{(n)}(\ii_+)|e^j(\y)\|}\biggr)\biggr)&\leq\frac{\cmd{\y}{\xi^{i}}(B^t_{j}(\y,\delta)\cap\mathcal{P}_0^{n-1}(\y))}{\cmd{\y}{\xi^{i}}(\mathcal{P}_0^{n-1}(\y))}\\
&\leq\cmd{F^{-n}(\y)}{\xi^{i}}\biggl(B^t_{j}\biggl(F^{-n}(\y),\frac{\delta}{\mathfrak{m}(A^{(n)}(\ii_+)|e^j(\y))}\biggr)\biggr).
\end{split}
\end{equation}
Indeed,	by \eqref{econdcond}, \eqref{erefin}, and Lemma~\ref{linvcond},
\begin{equation}\label{econt1d}
\frac{\cmd{\y}{\xi^{i}}(B^t_{j}(\y,\delta)\cap\mathcal{P}_0^{n-1}(\y))}{\cmd{\y}{\xi^{i}}(\mathcal{P}_0^{n-1}(\y))}=\cmd{F^{-n}(\y)}{\xi^{i}}(F^{-n}(B^t_{j}(\y,\delta))).
\end{equation}
On the other hand, by the definition of $B^t_j(\y,\delta)$,
\begin{equation}\label{econt2d}
B^t_{j}\biggl(F^{-n}(\y),\frac{\delta}{\|A^{(n)}(\ii_+)|e^j(\y)\|}\biggr)\subseteq F^{-n}(B^t_{j}(\y,\delta))\subseteq B^t_{j}\biggl(F^{-n}(\y),\frac{\delta}{\mathfrak{m}(A^{(n)}(\ii_+)|e^j(\y))}\biggr).
\end{equation}

\begin{proof}[Proof of Proposition~\ref{pLY2}]
  First, we will show the upper bound. Let $\y=(\xv,\ii_+)\in\Lambda\times\Sigma$ be such that $\xv=\pi(i_{-1}i_{-2}\cdots)$ and let $n,k\geq1$. Then, by \eqref{econt2d},
  \begin{align*}
  \cmd{\y}{\xi^{i}}(B^t_{j}(\y,\delta))&=
  \cmd{F^{-nk}(\y)}{\xi^{i}}(F^{-nk}(B^t_{j}(\y,c_{n,k}\delta)))\cdot\prod_{l=1}^k\dfrac{\cmd{F^{-n(l-1)}(\y)}{\xi^{i}}(F^{-n(l-1)}(B^t_{j}(\y,c_{n,l-1}\delta)))}{\cmd{F^{-nl}(\y)}{\xi^{i}}(F^{-nl}(B^t_{j}(\y,c_{n,l}\delta)))}\\
  &\geq\cmd{F^{-nk}(\y)}{\xi^{i}}\biggl(B^t_{j}\biggl(F^{-nk}(\y),\frac{c_{n,k}\delta}{\|A_{i_{-1}}\cdots A_{i_{-nk}}|e^j_{F^{-nk}(\y)}\|}\biggr)\biggr)\\
  &\quad\quad\cdot\prod_{l=1}^k\dfrac{\cmd{F^{-n(l-1)}(\y)}{\xi^{i}}\biggl(B^t_{j}\biggl(F^{-n(l-1)}(\y),\frac{c_{n,l-1}\delta}{\|A_{i_{-1}}\cdots A_{i_{-n(l-1)}}|e^j_{F^{-n(l-1)}(\y)}\|}\biggr)\biggr)}{\cmd{F^{-nl}(\y)}{\xi^{i}}\biggl(F^{-n}\biggl(B^t_{j}\biggl(F^{-n(l-1)}(\y),\frac{c_{n,l}\delta}{\mathfrak{m}(A_{i_{-1}}\cdots A_{i_{-n(l-1)}}|e^j_{F^{-n(l-1)}(\y)})}\biggr)\biggr)\biggr)},
  \end{align*}
  where $c_{n,l}$ will be defined later. By \eqref{econt1d}, \eqref{ebound1}, and Lemma~\ref{lTDSmanif},
  \begin{align*}
  &\cmd{F^{-nl}(\y)}{\xi^{i}}\biggl(F^{-n}\biggl(B^t_{j}\biggl(F^{-n(l-1)}(\y),\frac{c_{n,l}\delta}{\mathfrak{m}(A_{i_{-1}}\cdots A_{i_{-n(l-1)}}|e^j_{F^{-n(l-1)}(\y)})}\biggr)\biggr)\biggr)\\
  &\quad=\dfrac{\cmd{F^{-n(l-1)}(\y)}{\xi^{i}}\biggl(B^t_{j}\biggl(F^{-n(l-1)}(\y),\frac{c_{n,l}\delta}{\mathfrak{m}(A_{i_{-1}}\cdots A_{i_{-n(l-1)}}|e^j_{F^{-n(l-1)}(\y)})}\biggr)\cap\mathcal{P}_0^{n-1}(F^{-n(l-1)}(\y))\biggr)}{\cmd{\y}{\xi^{i}}(\mathcal{P}_0^{n-1}(F^{-n(l-1)}(\y)))}\\
  &\quad\leq\dfrac{\cmd{F^{-n(l-1)}(\y)}{\xi^{i}}\biggl(B^t_{j}\biggl(F^{-n(l-1)}(\y),\frac{c_{n,l}\delta}{C^{-1}\|A_{i_{-1}}\cdots A_{i_{-n(l-1)}}|e^j_{F^{-n(l-1)}(\y)}\|}\biggr)\cap\mathcal{P}_0^{n-1}(F^{-n(l-1)}(\y))\biggr)}{\cmd{\y}{\xi^{i}}(\mathcal{P}_0^{n-1}(F^{-n(l-1)}(\y)))}.
  \end{align*}
  By choosing $\delta=\mathfrak{m}(A_{i_{-1}}\cdots A_{i_{-nk}}|e^j_{F^{-nk}(\y)})$, we have
  \begin{multline*}
  \cmd{F^{-n(l-1)}(\y)}{\xi^{i}}\biggl(B^t_{j}\biggl(F^{-n(l-1)}(\y),\frac{c_{n,l-1}\delta}{\|A_{i_{-1}}\cdots A_{i_{-n(l-1)}}|e^j_{F^{-n(l-1)}(\y)}\|}\biggr)\biggr)\\
  \geq\cmd{F^{-n(l-1)}(\y)}{\xi^{i}}(B^t_{j}(F^{-n(l-1)}(\y),c_{n,l-1}C^{-1}\mathfrak{m}(A_{i_{-n(l-1)-1}}\cdots A_{i_{-nk}}|e^j_{F^{-nk}(\y)})))
  \end{multline*}
  and
  \begin{multline*}
  \cmd{F^{-n(l-1)}(\y)}{\xi^{i}}\biggl(B^t_{j}\biggl(F^{-n(l-1)}(\y),\frac{c_{n,l}\delta}{C^{-1}\|A_{i_{-1}}\cdots A_{i_{-n(l-1)}}|e^j_{F^{-n(l-1)}(\y)}\|}\biggr)\cap\mathcal{P}_0^{n-1}(F^{-n(l-1)}(\y))\biggr)\\
  \leq\cmd{F^{-n(l-1)}(\y)}{\xi^{i}}(B^t_{j}(F^{-n(l-1)}(\y),\frac{c_{n,l}}{C^{-1}}\mathfrak{m}(A_{i_{-n(l-1)-1}}\cdots A_{i_{-nk}}|e^j_{F^{-nk}(\y)}))\cap\mathcal{P}_0^{n-1}(F^{-n(l-1)}(\y))).
  \end{multline*}
  Set $c_{n,l}=c_{n,l-1}C^{-2}$, i.e.\ $c_{n,l}=C^{-2l}$ for $l\in\{0,\dots,k\}$. Then let us define
  $$
  h_{n,k,l}(\y)=\dfrac{\cmd{\y}{\xi^{i}}(B^t_{j}(\y,C^{-2l+1}\mathfrak{m}(A_{i_{-1}}\cdots A_{i_{-n(k-l)-1}}|e^j_{F^{-n(k-l)}(\y)}))\cap\mathcal{P}_0^{n-1}(\y))}{\cmd{\y}{\xi^{i}}(B^t_{j}(\y,C^{-2l+1}\mathfrak{m}(A_{i_{-1}}\cdots A_{i_{-n(k-l)-1}}|e^j_{F^{-n(k-l)}(\y)})))}.
  $$
  Then
  \begin{equation}\label{epartialupper}
  \begin{split}
  \frac{1}{k}\log \cmd{\y}{\xi^{i}}(B^t_{j}(\y,\mathfrak{m}(A_{i_{-1}}\cdots A_{i_{-nk}}|e^j_{F^{-nk}(\y)})))&\geq
  \frac{1}{k}\log\cmd{F^{-nk}(\y)}{\xi^{i}}(B^t_{j}(F^{-nk}(\y),C^{-2k}))\\
  &\quad\quad+\frac{1}{k}\sum_{l=0}^{k-1}\log\cmd{F^{-nl}(\y)}{\xi^{i}}(\mathcal{P}_0^{n-1}(F^{-nl}(\y)))\\&\quad\quad-\frac{1}{k}\sum_{l=0}^{k-1}\log h_{n,k,l}(F^{-nl}(\y)).
  \end{split}
  \end{equation}
  Since $F$ is conjugated to the full shift, $F^{-n}$ is ergodic. Thus, by applying Maker's Ergodic Theorem (Theorem~\ref{lmaker}) and Birkhoff's Ergodic Theorem,
  \begin{align}
  \lim_{k\to\infty}\frac{1}{k}\sum_{l=0}^{k-1}\log\cmd{F^{-nl}(\y)}{\xi^{i}}(\mathcal{P}_0^{n-1}(F^{-nl}(\y)))&=\int\log\cmd{\z}{\xi^{i}}(\mathcal{P}_0^{n-1}(\z))\dd\mu(\z)=-nH^i,\label{elim1}\\
  \lim_{k\to\infty}\frac{1}{k}\sum_{l=0}^{k-1}\log h_{n,k,l}(F^{-nl}(\y))&=\int\log\cmd{\z}{\xi^{j}}(\mathcal{P}_0^{n-1}(\z))\dd\mu(\z)=-nH^j\label{elim2}
  \end{align}
  for $\mu$-almost every $\y$.
  Let us now consider the first summable of the right-hand side of \eqref{epartialupper}. Let $m_k$ be the smallest integer such that $C^{-2k}\geq\|A_{i_{-nk-1}}\cdots A_{i_{-nk-m_k}}|e^j_{F^{-nk-m_k}(\y)}\|$. It is easy to see that there is an integer $\alpha>0$ such that $m_k\leq \alpha k$. Thus,
  $$
  \cmd{F^{-nk}(\y)}{\xi^{i}}(B^t_{j}(F^{-nk}(\y),C^{-2k}))\geq\cmd{F^{-nk}(\y)}{\xi^{i}}(\mathcal{P}_0^{\alpha k}(F^{-nk}(\y))).
  $$
  Then, by \eqref{erefin} and Lemma~\ref{linvcond},
  $$
  \cmd{F^{-nk}(\y)}{\xi^{i}}(\mathcal{P}_0^{\alpha k}(F^{-nk}(\y)))=\frac{\cmd{\y}{\xi^{i}}(\mathcal{P}_0^{(\alpha+n) k}(\y))}{\cmd{\y}{\xi^{i}}\left(\mathcal{P}_0^{nk}(\y)\right)}.
  $$
  Thus, by the Shannon-McMillan-Breiman Theorem
  $$
  \frac{1}{k}\log\cmd{F^{-nk}(\y)}{\xi^{i}}(\mathcal{P}_0^{\alpha k}(F^{-nk}(\y)))=-(\alpha+n)H^i+nH^i
  $$
  for $\mu$-almost every $\y$.  Hence, by \eqref{epartialupper}, \eqref{elim1}, and \eqref{elim2},
  $$
  \overline{\dim}(\cmd{\y}{\xi^{i}},\y)\leq\limsup_{k\to\infty}\frac{\log \cmd{\y}{\xi^{i}}(B^t_{j}(\y,\mathfrak{m}(A_{i_{-1}}\cdots A_{i_{-nk}}|e^j_{F^{-nk}(\y)})))}{\log \mathfrak{m}(A_{i_{-1}}\cdots A_{i_{-nk}}|e^j_{F^{-nk}(\y)})}\leq\frac{(\alpha+n)H^i-nH^j}{n\ly{j}}
  $$
  for $\mu$-almost every $\y$ and every $n\geq1$. Thus, the upper bound follows.

  The proof of lower bound for the local dimension is analogous. We set $c_{n,l}=C^{2l}$ and $\delta=\|A_{i_{-1}}\cdots A_{i_{-nk}}|e^j_{F^{-nk}(\y)}\|$. Similarly to \eqref{epartialupper}, we get
  \begin{multline*}
  \frac{1}{k}\log \cmd{\y}{\xi^{i}}(B^t_{j}(\y,\|A_{i_{-1}}\cdots A_{i_{-nk}}|e^j_{F^{-nk}(\y)}\|))\\
  \leq\frac{1}{k}\log\cmd{F^{-na_nk}(\y)}{\xi^{i}}(B^t_{j}(F^{-na_nk}(\y),C^{2a_nk}\|A_{i_{-na_nk-1}}\cdots A_{i_{-nk}}|e^j_{F^{-nk}(\y)}\|))\\
  +\frac{1}{k}\sum_{l=0}^{a_nk-1}\log\cmd{F^{-nl}(\y)}{\xi^{i}}(\mathcal{P}_0^{n-1}(F^{-nl}(\y)))-\frac{1}{k}\sum_{l=0}^{a_nk-1}\log \widehat{h}_{n,k,l}(F^{-nl}(\y)),
  \end{multline*}
  where $a_n=\frac{-n\log\alpha_{\max}}{2(\log C-n\log\alpha_{\max})}$ (and $\alpha_{\max}=\max_{i}\|A_i\|$). Define
  $$
  \widehat{h}_{n,k,l}(\y)=\dfrac{\cmd{\y}{\xi^{i}}(B^t_{j}(\y,C^{2l+1}\|A_{i_{-1}}\cdots A_{i_{-n(k-l)-1}}|e^j_{F^{-n(k-l)}(\y)}\|)\cap\mathcal{P}_0^{n-1}(\y))}{\cmd{\y}{\xi^{i}}(B^t_{j}(\y,C^{2l+1}\|A_{i_{-1}}\cdots A_{i_{-n(k-l)-1}}|e^j_{F^{-n(k-l)}(\y)}\|))}.
  $$
  Since
  $$
  C^{2l+1}\|A_{i_{-1}}\cdots A_{i_{-n(k-l)-1}}|e^j_{F^{-n(k-l)}(\y)}\|\leq C^{a_nk}\alpha_{\max}^{n(1-a_n)k}\leq\alpha_{\max}^{\frac{n}{2}k}\to0
  $$
  as $k\to\infty$,
  we may apply Maker's Ergodic Theorem (Theorem~\ref{lmaker}) and therefore
  $$
  \underline{\dim}(\cmd{\y}{\xi^{i}},\y)\geq\frac{a_nnH^i-a_nnH^j}{n\ly{j}}
  $$
  for every $n\geq1$ which proves the lower bound.
\end{proof}

\begin{proof}[Proof of Theorem~\ref{tLYSSC}]
  We prove the statement by induction. For $\max\mathcal{D}$ the statement follows by Proposition~\ref{pLY1}. Let $i\in\mathcal{D}\cup\left\{0\right\}$ and let us assume that the statement holds for every $k\in\mathcal{D}$ that $k>i$. Let $j=\min\left\{\mathcal{D}\cap\left\{k>i\right\}\right\}$.
  Then one can show the induction step by replacing the measure $\cm{x}{i+1}$ with $\cmd{\z}{\xi^{j}}$ and the measure $\cm{x}{i}$ with $\cmd{\z}{\xi^{i}}$ in the proof of Proposition~\ref{prop:LY3}.
  The statement follows.
\end{proof}

\begin{prop}\label{pLY3}
  Under the assumptions of Theorem~\ref{tLYSSC}, for every $i\in\mathcal{D}$ the measure $(\cmd{\y}{\xi^0})^T_{(F^{i}_{\ii_+})^{\bot}}$ is exact dimensional for $\mu$-almost every $\y=(\xv,\ii_+)$ and
  $$
  \dim(\cmd{\y}{\xi^0})^T_{(F^{i}_{\ii_+})^{\bot}}=\sum_{k=0}^{i-1}\frac{H^k-H^{k+1}}{\ly{k+1}}.
  $$
  In particular, if $j=\max(\{k<i\}\cap\mathcal{D})$, then
  \begin{equation*}
  \dim(\cmd{\y}{\xi^0})^T_{(F^{i}_{\ii_+})^{\bot}}=\dim(\cmd{\y}{\xi^0})^T_{(F^{j}_{\ii_+})^{\bot}}+\frac{H^j-H^i}{\ly{i}}.
  \end{equation*}
\end{prop}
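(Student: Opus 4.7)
The plan is to mimic the induction scheme used in Proposition~\ref{pLY4}, relying on Theorem~\ref{tLYSSC} for the exact-dimensionality of the full conditional measures, on Proposition~\ref{pLY2d} for the ``single-step'' transversal projections, and on Lemma~\ref{llbmuubtmu} to glue the pieces together. I will establish the upper and lower bounds for the local dimension separately.

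For the \textbf{upper bound}, I observe that within a leaf $\xi^0(\y)$ the fibres of the orthogonal projection onto $(F^{i}_{\ii_+})^{\bot}$ coincide with the leaves of $\xi^{i}$. Hence, by uniqueness of conditional measures and \eqref{econdcond}, the conditional measures of $\cmd{\y}{\xi^0}$ along these fibres are the $\cmd{\z}{\xi^{i}}$. Since Theorem~\ref{tLYSSC} gives their exact dimension, Lemma~\ref{llbmuubtmu} yields
\[
\udimloc\bigl((\cmd{\y}{\xi^0})^T_{(F^{i}_{\ii_+})^{\bot}},\y\bigr)\leq\dim\cmd{\y}{\xi^0}-\dim\cmd{\y}{\xi^{i}}=\sum_{k=0}^{i-1}\frac{H^k-H^{k+1}}{\ly{k+1}},
\]
the last equality being the difference of the two formulas from Theorem~\ref{tLYSSC}.

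For the \textbf{lower bound}, I would induct on the elements of $\mathcal{D}$. The base case $i=\min\mathcal{D}$ follows directly from Proposition~\ref{pLY2d} applied to the consecutive pair $(0,i)$ in $\mathcal{D}\cup\{0\}$, after telescoping using $H^k=H^{k+1}$ for $k+1\notin\mathcal{D}$ and $\ly{k+1}=\ly{i}$ for $k+1\in\{1,\dots,i\}$. For the inductive step, set $j=\max(\{k<i\}\cap\mathcal{D})\cup\{0\}$ and start from the two identities
\[
(\cmd{\y}{\xi^0})^T_{(F^{i}_{\ii_+})^{\bot}}=\int(\cmd{\z}{\xi^j})^T_{(F^{i}_{\ii_+})^{\bot}}\dd\cmd{\y}{\xi^0}(\z),
\]
\[
\bigl((\cmd{\y}{\xi^0})^T_{(F^{i}_{\ii_+})^{\bot}}\bigr)^T_{(F^{j}_{\ii_+})^{\bot}}=(\cmd{\y}{\xi^0})^T_{(F^{j}_{\ii_+})^{\bot}},
\]
where the second one uses $F^{i}_{\ii_+}\subset F^{j}_{\ii_+}$. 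Applying Lemma~\ref{llbmuubtmu} to $(\cmd{\y}{\xi^0})^T_{(F^{i}_{\ii_+})^{\bot}}$ with $V=(F^{j}_{\ii_+})^{\bot}$ then combines the fibre dimension $(H^j-H^i)/\ly{i}$, obtained from Proposition~\ref{pLY2d} on the consecutive pair $(j,i)$, with the induction hypothesis on the transversal $(\cmd{\y}{\xi^0})^T_{(F^{j}_{\ii_+})^{\bot}}$; collapsing equal $H^k$'s and equal $\ly{k+1}$'s produces exactly the target sum. The ``in particular'' clause is then merely the split of this sum at the index $j$.

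The \textbf{main obstacle} I foresee is justifying that the conditional measure of $(\cmd{\y}{\xi^0})^T_{(F^{i}_{\ii_+})^{\bot}}$ along the fibres of the further projection onto $(F^{j}_{\ii_+})^{\bot}$ really is $(\cmd{\z}{\xi^j})^T_{(F^{i}_{\ii_+})^{\bot}}$ for $\cmd{\y}{\xi^0}$-a.e.\ $\z$. This commutativity of disintegration and projection is not entirely routine because $\proj_{(F^i_{\ii_+})^{\bot}}$ is not injective on $\xi^0(\y)$; the correct justification uses Lemma~\ref{lem:condtech} together with the uniqueness of conditional measures and the observation that projecting onto $(F^i_{\ii_+})^{\bot}$ collapses precisely the $\xi^i$-leaves. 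Once this identification is in place, both bounds drop out of Lemma~\ref{llbmuubtmu} and the sum formula follows from bookkeeping on the indices.
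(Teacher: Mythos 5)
Your proposal is correct and follows essentially the same route as the paper, which simply says to run the induction from Proposition~\ref{pLY4} with the substitutions $\cm{x}{\xi^i}\to\cmd{\y}{\xi^i}$ and $F_i(\theta)\to F^{i}_{\ii_+}$. Both the upper bound (Lemma~\ref{llbmuubtmu} applied to $\cmd{\y}{\xi^0}$ and $\cmd{\z}{\xi^i}$, together with the exact dimensions from Theorem~\ref{tLYSSC}) and the lower bound (the two disintegration/projection identities plus Lemma~\ref{llbmuubtmu}, with the single-step term supplied by Proposition~\ref{pLY2d} and the rest by induction) match the paper's argument. The ``main obstacle'' you flag -- identifying the conditional measures of the projected measure along the further projection -- is exactly what the paper is implicitly using in the identity $(\cmd{\y}{\xi^0})^T_{(F^{i}_{\ii_+})^{\bot}}=\int(\cmd{\z}{\xi^j})^T_{(F^{i}_{\ii_+})^{\bot}}\dd\cmd{\y}{\xi^0}(\z)$; it is resolved as you say by noting that $\cmd{\z}{\xi^j}$ depends only on the leaf $\xi^j(\z)$, so the pushforward of the disintegration under $\proj_{(F^i_{\ii_+})^{\bot}}$ factors correctly through the coarser fibration by $\proj_{(F^j_{\ii_+})^{\bot}}$. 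The telescoping of equal $H^k$'s and equal $\ly{k+1}$'s across non-dominated indices that you describe is also precisely what makes the sum formula collapse to the targeted expression.
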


\begin{proof}
  We prove the statement by induction. For $i=\min\mathcal{D}$ the statement follows from Proposition~\ref{pLY2d}. Let $i\in\mathcal{D}$ and let us assume that the proposition holds for every $j<i$, $j\in\mathcal{D}$. The induction step can be proven as in the proof of Proposition~\ref{pLY4} by replacing $\cm{x}{\xi^i}$ with $\cmd{\y}{\xi^i}$, $F_i(\theta)$ with $F_{\ii_+}^i$, and $G$ with $F^{-1}$.
\end{proof}

\begin{proof}[Proof of Theorem~\ref{texactd}]
The induction step can be proven as in the proof of Proposition~\ref{pLY4} by replacing $\cm{x}{\xi^i}$ with $\cmd{\y}{\xi^i}$ and $F_i(\theta)$ with $F_{\ii_+}^i$.
\end{proof}

\end{document}